\documentclass[11pt]{article}

\usepackage{amsfonts,amsmath,latexsym,color,epsfig,hyperref}
\setlength{\textheight}{22.5cm} \setlength{\textwidth}{6.7in}
\setlength{\topmargin}{0pt} \setlength{\evensidemargin}{1pt}
\setlength{\oddsidemargin}{1pt} \setlength{\headsep}{10pt}
\setlength{\parskip}{1mm} \setlength{\parindent}{0mm}

\newtheorem{theorem}{Theorem}[section]
\newtheorem{lemma}[theorem]{Lemma}
\newtheorem{proposition}[theorem]{Proposition}

\newtheorem{corollary}[theorem]{Corollary}

\newenvironment{proof}
      {\medskip\noindent{\bf Proof:}\hspace{1mm}}
      {\hfill$\Box$\medskip}
\def\qed{\ifvmode\mbox{ }\else\unskip\fi\hskip 1em plus 10fill$\Box$}

\input{epsf}

\makeatletter
\def\Ddots{\mathinner{\mkern1mu\raise\p@
\vbox{\kern7\p@\hbox{.}}\mkern2mu
\raise4\p@\hbox{.}\mkern2mu\raise7\p@\hbox{.}\mkern1mu}}
\makeatother

\title{\vspace{-0.7cm}Short proofs of some extremal results II}
\author{David Conlon\thanks{Mathematical Institute, Oxford OX2 6GG,
United Kingdom. Email: {\tt david.conlon@maths.ox.ac.uk}. Research
supported by a Royal Society University Research Fellowship.}\and
Jacob Fox\thanks{Department of Mathematics, Stanford University, Stanford, CA 94305. Email: {\tt fox@math.mit.edu}. Research supported by a Packard Fellowship, by NSF Career Award DMS-1352121 and by an Alfred P. Sloan Fellowship.}
\and
Benny Sudakov\thanks{Department of Mathematics, ETH, 8092 Zurich, Switzerland.
Email: {\tt benjamin.sudakov@math.ethz.ch}. Research supported by SNSF grant 200021-149111.}}
\date{}

\begin{document}
\maketitle

\begin{abstract}
We prove several results from different areas of extremal combinatorics, including complete or partial solutions to a number of open problems. These results, coming mainly from extremal graph theory and Ramsey theory, have been collected together because in each case the relevant proofs are quite short. 
\end{abstract}

\section{Introduction}

We study several questions from extremal combinatorics, a broad area of discrete mathematics which deals with the problem of maximizing or minimizing the cardinality of a collection of finite objects satisfying a certain property. The problems we consider come mainly from the areas of extremal graph theory and Ramsey theory. In many cases, we give complete or partial solutions to open problems posed by researchers in the area.

While each of the results in this paper is interesting in its own right, the proofs are all quite short. Accordingly, in the spirit of Alon's `Problems and results in extremal 
combinatorics' papers~\cite{Al1, Al2, Al3} and our own earlier paper~\cite{CFS14}, we have chosen to combine them. We describe the results in brief below. For full details on each topic we refer the reader to the relevant 
section, each of which is self-contained and may be read separately from all others.

In Section~\ref{sec:setmap}, we address old questions of Erd\H{o}s and Hajnal~\cite{EH58} and Caro~\cite{C87} concerning extremal problems for set mappings. In Section~\ref{sec:FKP}, we answer a question of Foucaud, Krivelevich and Perarnau~\cite{FKP}, which extends an old problem of Bollob\'as and Erd\H{o}s~\cite{E68}, about finding large $K_{r,r}$-free subgraphs in graphs with a given number of edges. In Section~\ref{sec:cube}, we show how to use the Lov\'asz local lemma to embed sparse hypergraphs in large dense hypergraphs and apply this technique to improve the Ramsey number of the cube and other bipartite graphs. In Section~\ref{sec:minors}, we address a problem of Erd\H{o}s and Hajnal \cite{EH89} on estimating the bounds on a variant of the classical Ramsey problem and extend this to estimate the extremal number for small and shallow clique minors. We find a connection between induced Ramsey numbers and the Ruzsa--Szemer\'edi induced matching problem in Section~\ref{sec:matchings}. Finally, in Section~\ref{sec:coloredremoval}, we prove a colored variant of the famous triangle removal lemma with reasonable bounds.

All logarithms are base $2$ unless otherwise stated. For the sake of clarity of presentation, we systematically omit floor and ceiling signs whenever they are not crucial. We also do not make any serious attempt to optimize absolute constants in our statements and proofs. 

\section{Extremal problems for set mappings} \label{setmappings} \label{sec:setmap}

Consider all mappings $f:{M \choose k} \to {M \choose l}$ with $|M|=m$ such that $X$ is disjoint from $f(X)$ for all $X \in {M \choose k}$.
Let $p(m,k,l)$ be the maximum $p$ such that for every such mapping $f$ there is a subset $P \subset M$ with $|P| = p$ where $f(X)$ and $P$ are disjoint for all $X \in {P \choose k}$. In 1958, Erd\H{o}s and Hajnal \cite{EH58} proved that, for all $k$ and $l$, $$cm^{1/(k+1)} \leq p(m,k,l) \leq c'(m \log m)^{1/k},$$
where $c$ and $c'$ depend only on $k$ and $l$, and asked for a more exact determination of the dependence of $p(m,k,l)$ on $m$. 

In 1972, in a textbook application of the probabilistic method, Spencer \cite{S72} proved an extension of Tur\'an's theorem to $k$-uniform hypergraphs and used this to prove that $p(m,k,l) \geq cm^{1/k}$, where $c$ depends only on $k$ and $l$. More precisely, Spencer showed that every $r$-uniform hypergraph with $n$ vertices and $t$ edges contains an independent set of order at least $c_rn^{1+1/(r-1)}/t^{1/(r-1)}$. Now consider the $(k+1)$-uniform hypergraph with $m$ vertices and the $t=l{m \choose k}$ edges given by $X \cup \{y\}$ for all $X \in {[m] \choose k}$ and $y \in f(X)$. An independent set $P$ in this hypergraph is a set for which $f(X)$ and $P$ are disjoint for all $X \in {P \choose k}$. Since there is an independent set of order at least $\Omega(m^{1+1/k}/t^{1/k})=\Omega(m^{1/k})$, we get the desired lower bound on $p(m,k,l)$. 

Despite the attention the set mapping problem has received over the years, the upper bound has not been improved. Here we solve the Erd\H{o}s--Hajnal problem when $l$ is sufficiently large as a function of $k$, showing that $p(m, k, (k-1)!) = \Theta(m^{1/k})$, where the implied constants depend only on $k$. For simplicity, we first describe a construction for $l=k!$ and then show how to modify it to get
$l=(k-1)!$.

\begin{theorem}
For $m= n^k$, there is a function $f:{M \choose k} \to {M \choose k!}$ such that $|M|=m$, $X$ and $f(X)$ are disjoint for all $X \in {M \choose k}$, and every $P \subset M$ of order greater than $k^2 n$ contains a $k$-set $X$ such that $f(X)$ is not disjoint from $P$. 
\end{theorem}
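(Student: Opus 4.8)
The plan is to identify $M$ with the grid $[n]^k$ and to build $f$ out of ``transversals''. Given a $k$-set $X=\{p_1,\dots,p_k\}\subseteq[n]^k$, call a point $q\in[n]^k$ a transversal of $X$ if there is a bijection $\sigma\colon[k]\to X$ such that, for every $j\in[k]$, the $j$-th coordinate $q_j$ of $q$ equals the $j$-th coordinate of the point $\sigma(j)$. There are at most $k!$ transversals of $X$; let $f(X)$ consist of those transversals that do not lie in $X$, padded with arbitrary elements of $M\setminus X$ to a set of size exactly $k!$ (possible since $|M|=n^k$ is far larger than $k!+k$). This yields a map $f\colon\binom{M}{k}\to\binom{M}{k!}$ with $X$ disjoint from $f(X)$ for every $X$, and enlarging $f(X)$ by the padding can only make it easier for $f(X)$ to meet a given set $P$.

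It then suffices to prove the following purely combinatorial statement: if $P\subseteq[n]^k$ contains no $k+1$ distinct points $a_1,\dots,a_k,b$ with $b_j=(a_j)_j$ for all $j\in[k]$, then $|P|\le k^2 n$. Indeed, if $|P|>k^2 n$ then such points exist; setting $X=\{a_1,\dots,a_k\}\subseteq P$, the bijection $j\mapsto a_j$ shows that $b$ is a transversal of $X$, and since $b\notin X$ we get $b\in f(X)\cap P$, which is exactly what the theorem asks for.

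To prove the statement, for $a\in[n]$ and $j\in[k]$ write $P_j^a=\{p\in P:p_j=a\}$ for the slice of $P$ through $a$ in direction $j$. Suppose some $b\in P$ has $|P_j^{b_j}|\ge k+1$ for every $j\in[k]$. Then we may select points $a_1,\dots,a_k$ one at a time with $a_j\in P_j^{b_j}\setminus\{b\}$ and $a_j\notin\{a_1,\dots,a_{j-1}\}$, since at step $j$ there are at least $k$ candidates and fewer than $k$ are excluded; the resulting $a_1,\dots,a_k,b$ are $k+1$ distinct points with $b_j=(a_j)_j$, a forbidden configuration. Hence in a $P$ with no forbidden configuration, every $b\in P$ lies in some direction $j$ with $|P_j^{b_j}|\le k$, so $P=\bigcup_{j=1}^{k}\{b\in P:|P_j^{b_j}|\le k\}$. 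For each fixed $j$, the slices $P_j^a$ with $a\in[n]$ partition $P$ into at most $n$ parts, so those of size at most $k$ cover at most $nk$ points; summing over the $k$ directions gives $|P|\le k^2 n$.

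Both halves of this argument are short; the only point that needs care is the insistence that $a_1,\dots,a_k$ (and $b$) be distinct. This distinctness is precisely what ties the size of $f(X)$ to the number of \emph{bijections} $[k]\to X$, namely $k!$, rather than to the $k^k$ arbitrary choice functions, and it is also exactly what the greedy selection above is arranged to respect; I expect this to be the main thing to get right. The subsequent reduction of the number of colours from $k!$ to $(k-1)!$, promised in the text, should then amount to trimming one coordinate's worth of transversals while retaining enough of them to keep the counting bound intact.
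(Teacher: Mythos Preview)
Your proof is correct and takes essentially the same approach as the paper: the construction of $f$ via ``transversals'' is exactly the paper's permutation construction $(x_{1\pi(1)},\dots,x_{k\pi(k)})$, and your covering argument (every point lies in a thin slice, so $|P|\le k\cdot kn$) is the non-iterative version of the paper's deletion process. If anything, your direct counting is slightly cleaner than the paper's ``keep deleting thin hyperplanes'' phrasing, but the two are equivalent.
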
  
\begin{proof}
Let $M=[n]^k$ and consider the function $f:{M \choose k} \to {M \choose k!}$ such that if $X$ consists of the $k$-tuples 
$(x_{1i},x_{2i},\ldots,x_{ki})$ for $i \in [k]$, then $f(X)$ consists of all $k$-tuples $(x_{1\pi(1)},x_{2\pi(2)},\ldots,x_{k\pi(k)})$, where $\pi$ is a permutation of $[k]$, with the caveat that if $(x_{1\pi(1)},x_{2\pi(2)},\ldots,x_{k\pi(k)})$ is equal to an element of $X$ or a previously chosen element of $f(X)$, we instead choose an arbitrary element of $M$ which is not equal to any of these elements. With this choice, $f(X)$ is well defined and $X$ and $f(X)$ are disjoint.

Let $P \subset [n]^k$ with $|P|>k^2 n$. As long as one of the hyperplanes with one fixed coordinate has at most $k$ elements of $P$, delete those elements from $P$. As there are at most $kn$ such hyperplanes, there are at most $k^2 n$ deleted points. Therefore, there is a remaining point $p \in P$ such that there are at least $k$ other points remaining on each of the $k$ hyperplanes containing $p$ with one coordinate fixed. Picking distinct points $p_1,\ldots,p_k \in P$ which are distinct from $p$ with $p_i$ having the same $i^{\textrm{th}}$ coordinate as $p$ and letting $X=\{p_1,\ldots,p_k\}$, we have $p \in f(X)$, which completes the proof.  
\end{proof}

To improve the bound on $l$ in this theorem from $k!$ to $(k-1)!$, we need a minor modification. Indeed, suppose that the elements of $M$ are ordered lexicographically (that is, order is determined by first comparing the first coordinates, then the second coordinates and so on) and $f(X)$ is that subset of the $k$-tuples $(x_{1\pi(1)},x_{2\pi(2)},\ldots,x_{k\pi(k)})$ for which $\pi(1)$ is fixed so that the first coordinate $x_{1\pi(1)}$ is equal to the minimum of the first coordinates of the elements of $X$. The proof then proceeds as above, but we choose $p$ and $p_1$ to be the smallest, in the lexicographic ordering, among the remaining vertices after all deletions. This guarantees that $p_1$ is the first element in the set $\{p_1, \dots, p_k\}$, though the remaining elements may be ordered arbitrarily. We note that $l=(k-1)!=1$ is best possible when $k = 2$. After this paper was written, we learned that this particular case was independently solved much earlier by F\"uredi~\cite{F91}. It would be very interesting to determine whether $l$ can be decreased to $1$ for all $k$.

A related question of Caro~\cite{C87} (see also~\cite{AC86}) asks for an estimate on $q(m, k, d)$, the maximum $q$ such that for every mapping $f:{M \choose k} \to {M \choose k}$ with $|M|=m$ such that $|X \cap f(X)| \leq d$ for all $X \in {M \choose k}$, there is a subset $Q \subset M$ with $|Q| = q$ such that $f(X)$ is not a subset of $Q$ for any $X \in {Q \choose k}$. This differs from the Erd\H{o}s--Hajnal question on three counts: we take $l = k$; we allow $X$ to overlap with $f(X)$ by a certain controlled amount $d$; and we only require a subset where the image of each element is not contained within the subset rather than being entirely disjoint from it. If we let $t = (k-d)/(2k - d - 1)$, Caro~\cite{C87} proved that 
\[c m^t \leq q(m, k, d) \leq c' (m \log m)^t,\]
where $c$ and $c'$ depend only on $k$ and $d$. Here we partially answer a question of Caro~\cite{C87} by 
removing the log factor from the upper bound when $k = 2$.

\begin{theorem}
There exist constants $c_1$ and $c_2$ such that
\begin{itemize}
\item[(i)]
$q(m, 2, 1) \leq c_1 m^{1/2}$,
\item[(ii)]
$q(m, 2, 0) \leq c_2 m^{2/3}$.
\end{itemize}
\end{theorem}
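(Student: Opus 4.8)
The plan is to prove both parts by exhibiting, in each case, one admissible map $f$ with the property that every $Q$ larger than the claimed bound contains a pair $X\in\binom Q2$ with $f(X)\subseteq Q$; by the definition of $q(m,2,d)$ this is exactly what the upper bound asserts. In both cases $M$ will be a grid — $M=[n]^2$ with $m=n^2$ for (i), and $M=[n]^3$ with $m=n^3$ for (ii) — and general $m$ is handled by fixing such a grid inside $M$ of the largest possible size, defining $f$ arbitrarily on the remaining pairs, and noting that a large $Q$ still meets the grid in all but $O(m^{1/2})$ (resp. $O(m^{2/3})$) of its points; following the conventions of the paper I will not spell this out.

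For (i), take $M=[n]^2$. For a pair $\{u,v\}$ whose two points differ in both coordinates, order them lexicographically as $u<v$ and set $f(\{u,v\})=\{u,(v_1,u_2)\}$: since $u_1\ne v_1$ and $u_2\ne v_2$ this is a genuine $2$-set meeting $X$ in the single point $u$, so $|X\cap f(X)|=1$; on pairs sharing a coordinate, define $f$ arbitrarily subject to $|X\cap f(X)|\le1$. A set $Q$ then fails to avoid $f$ as soon as it contains a point $w$ having both another point of $Q$ to its left in its own row and another point of $Q$ in its own column: writing $u$ for the former and $v$ for the latter, one checks that $u<v$, that $u$ and $v$ differ in both coordinates, and that $f(\{u,v\})=\{u,w\}\subseteq Q$. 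If $Q$ has no such $w$, then every point of $Q$ that is not leftmost in its own row is alone in its own column; since there are at most $n$ leftmost-in-row points and at most $n$ points alone in their columns, $|Q|\le2n$. Hence $q(m,2,1)\le2n=2\sqrt m$.

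For (ii), the extra coordinate lets $f$ send a pair to a pair spread across two parallel lines while keeping $d=0$. Take $M=[n]^3$, and for $u=(x_1,y_1,z_1)$, $v=(x_2,y_2,z_2)$ with $x_1\ne x_2$ and $(y_1,z_1)\ne(y_2,z_2)$ put $f(\{u,v\})=\{(x_2,y_1,z_1),(x_1,y_2,z_2)\}$; this is symmetric in $u,v$, is a genuine $2$-set (the two points differ in the first coordinate), and is disjoint from $X$. On the remaining pairs define $f$ arbitrarily, disjoint from $X$. Now for each $(y,z)\in[n]^2$ let $A_{(y,z)}=\{x:(x,y,z)\in Q\}$, so that $\sum_{(y,z)}|A_{(y,z)}|=|Q|$ over $n^2$ index pairs. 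If $|A_{(y,z)}\cap A_{(y',z')}|\ge2$ for some $(y,z)\ne(y',z')$, pick distinct $x_1,x_2$ in the intersection and set $u=(x_1,y,z)$, $v=(x_2,y',z')$: then $u,v\in Q$, the hypotheses for the main case of $f$ hold, and $f(\{u,v\})=\{(x_2,y,z),(x_1,y',z')\}\subseteq Q$, so $Q$ does not avoid $f$. Otherwise every $2$-subset of $[n]$ lies in at most one of the sets $A_{(y,z)}$, whence $\sum_{(y,z)}\binom{|A_{(y,z)}|}{2}\le\binom n2$; but if $|Q|\ge2n^2$ then by convexity the left-hand side is at least $n^2\binom{2}{2}=n^2>\binom n2$, a contradiction. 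Hence $q(m,2,0)\le2n^2=2m^{2/3}$.

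The only genuinely creative step — and the one I expect to be the main obstacle — is the choice in (ii). On $[n]^2$ the natural $d=0$ rule $\{u,v\}\mapsto\{(u_1,v_2),(v_1,u_2)\}$ makes $Q$ bad precisely when it contains all four corners of an axis-parallel rectangle, and the K\H{o}v\'ari--S\'os--Tur\'an bound forces this only once $|Q|=\Omega(n^{3/2})=\Omega(m^{3/4})$, which is too weak. Passing to $[n]^3$ and mapping a pair to a pair distributed over two parallel lines replaces the rectangle obstruction by a linear‑hypergraph (Fisher‑type) obstruction on the slices $A_{(y,z)}$, and it is this that brings the threshold down to $\Theta(n^2)=\Theta(m^{2/3})$; crucially, because the final step is a deterministic double count rather than a union bound over a random construction, it loses no logarithmic factor — exactly the gain over Caro's estimate. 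Verifying that $f$ is well defined (and, in (ii), symmetric) on the degenerate pairs and that $|X\cap f(X)|$ takes the prescribed value in each case is routine.
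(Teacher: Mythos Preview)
Your proof of (i) is correct and essentially identical to the paper's (with the roles of rows and columns interchanged): both take $M=[n]^2$, send a generic pair to a pair sharing one endpoint and one coordinate line, and bound $|Q|$ by $2n$ via a leftmost/alone-in-line argument.

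Your proof of (ii) is also correct but follows a genuinely different route. The paper uses the asymmetric map $((x,y,z),(x',y',z'))\mapsto((x',y,z),(x',y,z'))$ (for $x<x'$, $y\ne y'$, $z\ne z'$), whose image sits on a single fiber in the $z$-direction; it then deletes the extremal point on each line in two of the three axis directions, applies pigeonhole to the surviving $\ge n^2+1$ points to find two sharing their $(x,y)$-coordinates, and backtracks through the deletions to recover the remaining two points of the configuration, obtaining $q(m,2,0)\le 3m^{2/3}$. Your symmetric coordinate-swap map $\{(x_1,y_1,z_1),(x_2,y_2,z_2)\}\mapsto\{(x_2,y_1,z_1),(x_1,y_2,z_2)\}$ instead reduces the question to whether the fibers $A_{(y,z)}\subseteq[n]$ form a linear set system, and a one-line Fisher-type double count gives $q(m,2,0)\le 2m^{2/3}$. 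Your argument is cleaner (no ordering cases, symmetric in $u,v$) and gives a marginally better constant; the paper's argument has the minor advantage of visibly extending the deletion technique already used for part~(i).
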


\begin{proof}
(i) We define a mapping of the complete graph with vertex set $[m]^2$. If $x < x'$ and $y \neq y'$, we map the edge $((x,y),(x',y'))$ to $((x,y),(x,y'))$ and otherwise we map arbitrarily while ensuring that $((x,y), (x',y'))$ doesn't map to itself. 


Suppose now that $Q$ is a subset of order at least $2m + 1$. On every horizontal or vertical line, we delete the highest point which is in $Q$. Since we delete at most $2m$ points, some point $q \in Q$ must remain. If $q = (x, y')$, we see that there are points $(x, y)$ and $(x',y')$ with $x < x'$ and $y' < y$ which are also in $Q$. But, if $e = ((x,y), (x', y'))$, its image is $((x,y), (x, y'))$, which is also in $Q$. 

(ii) We define a mapping of the complete graph with vertex set $[m]^3$. If $x < x'$, $y \neq y'$ and $z \neq z'$, we map the edge $((x,y,z), (x', y', z'))$ to $((x',y,z),(x',y,z'))$ and otherwise we map arbitrarily while ensuring that $((x,y,z)), (x', y', z'))$ is disjoint from its image.

Suppose now that $Q$ is a subset of order at least $3m^2 + 1$. On every line $\{(a,y,z): 1 \leq a \leq m\}$, we remove the lowest point, while on every line $\{(x,b,z): 1 \leq b \leq m\}$, we remove the highest point. The remaining set still has $m^2 + 1$ points. Therefore, there are two points which have the same $x$ and $y$ coordinate, say, $(x', y, z)$ and $(x', y, z')$, where $z < z'$. Since we removed the highest point on the line $\{(x', b, z'): 1 \leq b \leq m\}$, there exists $y' > y$ such that $(x', y', z')$ is in $Q$. Similarly, since we removed the lowest point on the line $\{(a, y, z): 1\leq a \leq m\}$, there exists $x < x'$ such that $(x, y, z)$ is in $Q$. But, if $e = ((x,y,z), (x', y', z'))$, its image is $((x',y,z),(x',y,z'))$, which is also in $Q$.
\end{proof}

\section{Large subgraphs without complete bipartite graphs} \label{sec:FKP}

Given a family of graphs $\mathcal{H}$, we let $f(m, \mathcal{H})$ be the size of the largest $\mathcal{H}$-free subgraph that can be found in any graph with $m$ edges, where a graph is $\mathcal{H}$-free if it contains no graph from the family $\mathcal{H}$ as a subgraph. The problem of estimating $f(m, \mathcal{H})$ was first raised by Bollob\'as and Erd\H{o}s~\cite{E68} at a workshop in 1966, where they asked whether every graph with $m$ edges contains a $C_4$-free subgraph with $\Omega(m^{3/4})$ edges. Erd\H{o}s~\cite{E71} later remarked that the answer in this case is likely $\Theta(m^{2/3})$, based on an example due to Folkman and private communication from Szemer\'edi.

Recently, the function $f(m, \mathcal{H})$ was rediscovered by Foucaud, Krivelevich and Perarnau~\cite{FKP}, who considered the case where $\mathcal{H}$ is the family of even cycles of length at most $2k$, obtaining estimates that are tight up to a logarithmic factor. In this section, we address a question asked by these same authors and extend the Folkman--Szemer\'edi result by determining $f(m, H)$ up to a constant factor when $H$ is a complete bipartite graph. 

Let $K_{r,s}$ be the complete bipartite graph with parts of order $r$ and $s$, where $2 \leq r \leq s$. The following theorem gives a lower bound on $f(m,K_{r,s})$.

\begin{theorem} \label{th1}
Every graph $G$ with $m$ edges contains a $K_{r,r}$-free subgraph of size at least $\frac{1}{4}m^{\frac{r}{r+1}} $.
\end{theorem}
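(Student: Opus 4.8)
The plan is to apply the probabilistic deletion method: replace $G$ by a suitably sparse random subgraph and then destroy every surviving copy of $K_{r,r}$ by removing one edge from each. The only ingredient that is not purely mechanical is a bound on the number of copies of $K_{r,r}$ in a graph with $m$ edges, so I would establish that first, in the form: any graph with $m$ edges contains at most $2^r\binom{m}{r}\le 2m^r$ copies of $K_{r,r}$. To prove this, I would associate to each copy of $K_{r,r}$ the data of a set of $r$ edges together with, for each such edge, a choice of one of its two endpoints: every copy of $K_{r,r}$ yields such a pair by taking a perfect matching between its two parts and declaring the chosen endpoints to be those in the first part, while conversely a set of $r$ edges with such a choice determines at most one copy (the two candidate parts, and hence the copy, are forced). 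Thus the number of copies is at most $\binom{m}{r}2^r$, and $2^r/r!\le 2$ finishes the estimate.

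I would then let $G'$ be the random subgraph of $G$ in which each edge is kept independently with probability $p=\tfrac12 m^{-1/(r+1)}$; note $p\le 1$, and the statement is trivial when $m=0$, so assume $m\ge 1$. By linearity of expectation $\mathbb{E}[e(G')]=pm=\tfrac12 m^{r/(r+1)}$, whereas the expected number of copies of $K_{r,r}$ surviving in $G'$ is at most $2m^r p^{r^2}=2^{1-r^2}m^{r/(r+1)}$, since $m^r p^{r^2}=(\tfrac12)^{r^2}m^{r-r^2/(r+1)}=(\tfrac12)^{r^2}m^{r/(r+1)}$. Consequently
\[
\mathbb{E}\bigl[\,e(G')-\#\{\text{copies of }K_{r,r}\text{ in }G'\}\,\bigr]\ \ge\ \Bigl(\tfrac12-2^{1-r^2}\Bigr)m^{r/(r+1)}\ \ge\ \tfrac14\,m^{r/(r+1)},
\]
where the last inequality uses $r\ge 2$. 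I would then fix an outcome of $G'$ for which $e(G')-\#\{\text{copies}\}$ attains at least this value and delete one edge from each copy of $K_{r,r}$ in it; the surviving graph is a $K_{r,r}$-free subgraph of $G$ with at least $\tfrac14 m^{r/(r+1)}$ edges.

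I do not expect a genuine obstacle: the counting bound is the only non-routine point, and it is in any case the special case $\rho^*(K_{r,r})=r$ of the standard fact that a graph with $m$ edges contains $O_H(m^{\rho^*(H)})$ copies of a fixed graph $H$ (so one could cite that instead of giving the matching argument). If a constant better than $\tfrac14$ were wanted, one would take $p=cm^{-1/(r+1)}$ and optimize $c-c^{r^2}$ after the corresponding adjustment of the counting constant; the exponent $r/(r+1)$ itself is forced by balancing $pm$ against $m^r p^{r^2}$.
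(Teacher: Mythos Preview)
Your proposal is correct and follows the paper's proof essentially verbatim: the same counting bound $2^r\binom{m}{r}\le 2m^r$ via perfect matchings in $K_{r,r}$, the same sampling probability $p=\tfrac12 m^{-1/(r+1)}$, and the same deletion-of-one-edge-per-copy finish. The only cosmetic difference is that you phrase the counting lemma as an injection (matching plus endpoint choices determines the copy) whereas the paper phrases it as a double count (each matching lies in at most $2^r$ copies).
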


To prove this theorem, we need an upper bound on the number of copies of $K_{r,r}$ which can be found in a graph with $m$ edges.
The problem of maximizing the number of copies of a fixed graph $H$ over all graphs with a given number of edges was solved by Alon \cite{Al} (and the corresponding problem for hypergraphs was solved by
Friedgut and Kahn \cite{FK}). For our purposes, the following simpler estimate will suffice.

\begin{lemma} \label{l1}
Every graph $G$ with $m$ edges contains at most $2m^r$ copies of $K_{r,r}$.
\end{lemma}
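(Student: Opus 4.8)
The plan is to count \emph{ordered} copies of $K_{r,r}$ and then divide out the orderings. By an ordered copy I mean a pair of $r$-tuples $(a_1,\dots,a_r;b_1,\dots,b_r)$ of distinct vertices of $G$ with $a_ib_j\in E(G)$ for all $i,j$; in a simple graph this automatically forces $\{a_1,\dots,a_r\}$ and $\{b_1,\dots,b_r\}$ to be disjoint, so an ordered copy really does span a copy of $K_{r,r}$. Conversely, since for $r\ge 2$ a subgraph isomorphic to $K_{r,r}$ has a unique bipartition into its two vertex classes, each such subgraph is spanned by exactly $2(r!)^2$ ordered copies (pick which class provides the $a$'s, then order each class). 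Hence it suffices to prove that $G$ contains at most $(2m)^r$ ordered copies, since $(2m)^r/\bigl(2(r!)^2\bigr)\le 2m^r$.

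To bound the number of ordered copies, first fix the tuple $(a_1,\dots,a_r)$. Any vertex that can play the role of some $b_j$ must be adjacent to each of $a_1,\dots,a_r$, hence must lie in the common neighbourhood $C:=\bigcap_{i=1}^r N_G(a_i)$, so the number of admissible tuples $(b_1,\dots,b_r)$ is at most $|C|^r$. The key estimate is that $C\subseteq N_G(a_i)$ for every $i$, so $|C|\le d_G(a_i)$ for each $i$ and therefore $|C|^r\le\prod_{i=1}^r d_G(a_i)$. Summing over all tuples $(a_1,\dots,a_r)\in V(G)^r$ (dropping distinctness only enlarges the sum) gives
\[
\#\{\text{ordered copies of }K_{r,r}\ \text{in}\ G\}\ \le\ \sum_{(a_1,\dots,a_r)\in V(G)^r}\ \prod_{i=1}^r d_G(a_i)\ =\ \Bigl(\sum_{v\in V(G)}d_G(v)\Bigr)^{\!r}\ =\ (2m)^r .
\]
Dividing by $2(r!)^2$ yields the bound $\tfrac{2^{r-1}}{(r!)^2}\,m^r\le 2m^r$ on the number of (unordered) copies of $K_{r,r}$, as required.

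There is no real obstacle in this argument beyond organising the count so that it factorises; the substantive point is the passage from common neighbourhoods to individual degrees via $|C|^r\le\prod_i d_G(a_i)$, which turns the outer sum into $\bigl(\sum_v d_G(v)\bigr)^r=(2m)^r$. One could instead fix just one class $A$ of the copy, observe that the number of copies with that class equals $\binom{|C(A)|}{r}$, where $C(A)$ is the set of common neighbours of $A$, and then bound $\sum_A\binom{|C(A)|}{r}$ by convexity; but reducing all the way down to single-vertex degrees, as above, is cleaner and gives the stated constant directly.
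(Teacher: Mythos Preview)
Your proof is correct but follows a different route from the paper's. The paper argues via matchings: every copy of $K_{r,r}$ contains a perfect matching of size $r$; a graph with $m$ edges has at most $\binom{m}{r}$ matchings of size $r$; and a fixed matching of size $r$ lies in at most $2^r$ copies of $K_{r,r}$ (for each matching edge, choose which endpoint goes to which side of the bipartition). This gives $2^r\binom{m}{r}\le 2m^r$ in one line, with no analysis at all.

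Your argument instead counts ordered copies and bounds the common neighbourhood of a fixed $r$-tuple by the product of individual degrees, so that the sum factorises as $\bigl(\sum_v d(v)\bigr)^r=(2m)^r$. This is closer in spirit to K\H{o}v\'ari--S\'os--Tur\'an-type degree counting and actually yields the slightly sharper leading constant $2^{r-1}/(r!)^2$ before you relax it to $2$. The paper's matching trick, on the other hand, is more portable: it is exactly what the paper reuses in Theorem~\ref{th3} for $k$-uniform hypergraphs, where one replaces $2^r$ by $(k!)^r$ with no change to the argument, whereas the degree-product approach would need a genuine reformulation in that setting.
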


\begin{proof}
Note that every copy of $K_{r,r}$ in $G$ contains a matching of size $r$. Since the number of such matchings is at most 
${m \choose r}$ and every matching of size $r$ can appear in at most $2^r$ copies of $K_{r,r}$, the total number of such copies is at most $2^r{m \choose r} \leq 2m^r$.
\end{proof}

Using this lemma, together with a simple probabilistic argument, one can prove the required lower bound on $f(m,K_{r,s})$.
\vspace{3mm}

\noindent{\bf Proof of Theorem \ref{th1}:}\, Let $G$ be a graph with $m$ edges. Consider a subgraph $G'$ of $G$ obtained by choosing
every edge independently at random with probability $p=\frac{1}{2}m^{-1/(r+1)}$.
Then the expected number of edges in $G'$ is $mp$. Also, by
Lemma \ref{l1}, the expected number of copies of $K_{r,r}$ in $G'$ is at most $2p^{r^2}m^{r}$. Delete one edge from every copy of 
$K_{r,r}$ contained in $G'$. This gives a $K_{r,r}$-free subgraph of $G$, which, by linearity of expectation, has at least
$$pm-2p^{r^2}m^{r} \geq \frac{1}{2} m^{\frac{r}{r+1}}-\frac{1}{8} m^{\frac{r}{r+1}} \geq \frac{1}{4} m^{\frac{r}{r+1}}$$
edges on average. Hence, there exists a choice of $G'$ which produces a $K_{r,r}$-free subgraph of $G$ of size at least
$\frac{1}{4}m^{\frac{r}{r+1}} $. \hfill $\Box$
\vspace{0.1cm}

We will now show that this estimate is tight when $G$ is an appropriately chosen complete bipartite graph with $m$ edges. Since the Tur\'an number for $K_{r,s}$ is not known in general, it is somewhat surprising that one can prove a tight bound on the size of the largest $K_{r,s}$-free subgraph in graphs with $m$ edges.

\begin{theorem} \label{th2}
Let $2 \leq r \leq s$ and let $G$ be a complete bipartite graph with parts $U$ and $V$, where $|U|=m^{1/(r+1)}$ and $|V|=m^{r/(r+1)}$. Then $G$ has $m$ edges and the largest $K_{r,s}$-free subgraph of $G$ has at most $s m^{r/(r+1)}$ edges.
\end{theorem}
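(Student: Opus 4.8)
The plan is to run a standard K\H{o}v\'ari--S\'os--Tur\'an style double count, counting the $r$-subsets of the \emph{small} part $U$ together with their common neighbours in the \emph{large} part $V$; the claimed bound turns out to be tight precisely because $|U|^{r}$ equals $|V|$.

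First, $G$ has $|U|\cdot|V| = m^{1/(r+1)}\cdot m^{r/(r+1)} = m$ edges. Now let $H$ be a $K_{r,s}$-free subgraph of $G$ and suppose, for contradiction, that $e(H) > s\,|V|$; in particular $e(H)/|V| > s \ge r$. Writing $d(v)$ for the degree of $v \in V$ in $H$, so that $\sum_{v\in V} d(v) = e(H)$, consider
\[
  N \;=\; \sum_{v\in V}\binom{d(v)}{r}
  \;=\; \sum_{R\in\binom{U}{r}}\bigl|\{v\in V:\ R\subseteq N_H(v)\}\bigr|.
\]
For the upper bound on $N$: if some $r$-set $R\subseteq U$ had $s$ or more common neighbours in $V$, then $R$ together with those vertices would span a $K_{r,s}$ in $H$; since $H$ is $K_{r,s}$-free, every summand on the right is at most $s-1$, and hence $N \le (s-1)\binom{|U|}{r} \le (s-1)\,|U|^{r}/r! = (s-1)\,|V|/r!$, where the last equality is the identity $|U|^{r}=|V|$. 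For the lower bound on $N$: the function $x \mapsto \tfrac1{r!}x(x-1)\cdots(x-r+1)$ for $x \ge r-1$, extended by $0$ for $x < r-1$, is convex on $\mathbb{R}$, so Jensen's inequality gives $N \ge |V|\binom{e(H)/|V|}{r}$. Combining the two estimates yields $\binom{e(H)/|V|}{r} \le (s-1)/r!$. On the other hand, $\binom{x}{r}$ is increasing for $x \ge r-1$, so from $e(H)/|V| > s$ we get $\binom{e(H)/|V|}{r} > \binom{s}{r} = \tfrac1{r!}s(s-1)\cdots(s-r+1) \ge (s-1)/r!$, the last inequality holding because $r \ge 2$ (so the product retains the factors $s$ and $s-1$, and $s(s-1) \ge s-1$). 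This contradiction shows $e(H) \le s\,|V| = s\,m^{r/(r+1)}$.

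The argument is short, and the only points that require care are choosing the correct side from which to count $r$-sets — this is exactly where the hypothesis $|V| = |U|^{r}$ enters, and it is what makes the estimate tight rather than off by a polynomial or logarithmic factor — and the (standard but slightly fiddly for $r \ge 3$) convexity of the truncated binomial $x \mapsto \binom{x}{r}$. I do not anticipate any genuine obstacle beyond these bookkeeping details; one should perhaps note in passing that the computation remains valid even when $|V|$ happens to be small compared with $s$, since then the bound $N \le (s-1)\binom{|U|}{r}$ holds for the trivial reason that $V$ has fewer than $s$ vertices.
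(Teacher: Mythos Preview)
Your proof is correct and is essentially the same K\H{o}v\'ari--S\'os--Tur\'an argument the paper gives: both bound $\sum_{v\in V}\binom{d(v)}{r}$ from above via $K_{r,s}$-freeness and from below via convexity, obtaining a contradiction when the average degree $e(H)/|V|$ is at least $s$. The only cosmetic difference is that you carry the slightly sharper constant $s-1$ in the upper bound, which makes no difference to the outcome.
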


\begin{proof} 
The proof is a simple application of the counting argument of  K\H{o}v\'ari, S\'os and Tur\'an \cite{KST}. Let $G'$ be a $K_{r,s}$-free subgraph of $G$ and let $d=e(G')/|V|$ be the average degree of vertices in $V$ within $G'$. If $d \geq s$, then, by convexity,
$$\sum_{v \in V} { d_{G'}(v) \choose r} \geq |V| {d \choose r} \geq {s \choose r}m^{r/(r+1)}\geq s m^{r/(r+1)}/r!\,.$$
On the other hand, since $G'$ is $K_{r,s}$-free we have that
$$\sum_{v \in V} {d_{G'}(v) \choose r} < s {|U| \choose r} \leq s |U|^r/r!=s m^{r/(r+1)}/r!\,.$$
This contradiction completes the proof of the theorem. 
\end{proof}


These results can also be extended to $k$-uniform hypergraphs, which, for brevity, we call $k$-graphs. Let $K^{(k)}_{r,\ldots,r}$ denote the complete $k$-partite $k$-graph with parts of order $r$.

 \begin{theorem}  \label{th3}
 Every $k$-graph $G$ with $m$ edges contains a $K^{(k)}_{r,\ldots,r}$-free subgraph of size at least $\frac{1}{2k!}m^{\frac{q-1}{q}} $, where
 $q=\frac{r^k-1}{r-1}$.
 \end{theorem}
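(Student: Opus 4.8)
The plan is to run the argument for Theorem~\ref{th1} essentially unchanged, so that the only new thing needed is a hypergraph version of Lemma~\ref{l1}: an upper bound of the form $C_k\,m^{r}$ for the number of copies of $H=K^{(k)}_{r,\dots,r}$ in a $k$-graph with $m$ edges. It is worth recording in advance why $m^{r}$ is the correct power and why it yields the exponent $\frac{q-1}{q}$. Writing $e(H)=r^{k}$, a random sparsification keeping each edge independently with probability $p$ has $pm$ edges and at most $C_k\,p^{r^{k}}m^{r}$ copies of $H$ in expectation, and balancing these forces $p$ to be of order $m^{-(r-1)/(r^{k}-1)}=m^{-1/q}$ (recall $q=\frac{r^{k}-1}{r-1}$, so that $q-1=\frac{r^{k}-r}{r-1}$), whence $pm$ is of order $m^{1-1/q}=m^{(q-1)/q}$.

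For the counting step, let $A_{1},\dots,A_{k}$ be the parts of $K^{(k)}_{r,\dots,r}$ and observe that for each $j\le r$ the ``diagonal'' transversal consisting of the $j$-th vertex of each part is an edge; these $r$ edges are pairwise disjoint and cover all $kr$ vertices. Hence every copy of $H$ in $G$ contains some set of $r$ pairwise disjoint edges, and $G$ has at most $\binom{m}{r}\le m^{r}/r!$ such sets. Conversely, if $F$ is a fixed set of $r$ pairwise disjoint edges, then any copy of $H$ containing $F$ has vertex set exactly $V(F)$ (which has $kr$ vertices) and each edge of $F$ must be a transversal of that copy's part-partition; since the $k$ vertices of a single edge can be assigned to the $k$ parts in at most $k!$ ways, $F$ lies in at most $(k!)^{r}$ copies of $H$. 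Therefore $G$ contains at most $\frac{(k!)^{r}}{r!}\,m^{r}$ copies of $K^{(k)}_{r,\dots,r}$, which is the required analogue of Lemma~\ref{l1}.

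Now take a random subgraph $G'$ of $G$ retaining each edge independently with probability $p=\frac{1}{k!}m^{-1/q}$, and delete one edge from every copy of $H$ that survives; the result is $K^{(k)}_{r,\dots,r}$-free, and by linearity of expectation it has at least
\[
pm-\frac{(k!)^{r}}{r!}\,p^{r^{k}}m^{r}\;=\;\frac{1}{k!}\,m^{(q-1)/q}-\frac{1}{r!\,(k!)^{r^{k}-r}}\,m^{(q-1)/q}\;\ge\;\frac{1}{2k!}\,m^{(q-1)/q}
\]
edges on average, where the last inequality uses $r!\ge 2$ together with $(k!)^{r^{k}-r}\ge k!$ (valid since $r^{k}-r\ge 1$ for $r,k\ge 2$), and where the exponents match because $r-\frac{r^{k}}{q}=\frac{r^{k}-r}{r^{k}-1}=\frac{q-1}{q}$. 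Fixing an outcome that meets the expectation completes the proof.

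The only point that takes any thought is identifying the substructure to count — the diagonal matching of $r$ pairwise disjoint edges — and then checking that the constant $\frac{(k!)^{r}}{r!}$ produced by the counting lemma is small enough that the deletion argument still delivers the factor $\frac{1}{2k!}$; everything else is the same exponent bookkeeping as in Theorem~\ref{th1} and Lemma~\ref{l1}, made to look complicated only by the definition of $q$. I do not expect a genuine obstacle here.
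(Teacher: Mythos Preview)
Your proof is correct and follows essentially the same approach as the paper: the same matching-based count showing $G$ has at most $(k!)^r\binom{m}{r}$ copies of $K^{(k)}_{r,\dots,r}$, the same random sparsification with $p=\frac{1}{k!}m^{-1/q}$, and the same deletion argument. Your write-up is in fact a bit more explicit than the paper's in verifying the exponent identity $r-r^k/q=(q-1)/q$ and in checking that the constant $\frac{1}{r!(k!)^{r^k-r}}\le\frac{1}{2k!}$, but the underlying argument is identical.
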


\begin{proof}
Let $G$ be a $k$-graph with $m$ edges. Every copy of $K^{(k)}_{r,\ldots,r}$ in $G$ contains a matching of size $r$ and the number of such matchings is at most ${m \choose r}$. On the other hand, every matching in $G$ of size $r$ can appear in at most $(k!)^r$ copies of $K^{(k)}_{r,\ldots,r}$. This implies that the total number of such copies is at most $(k!)^r{m \choose r}$. 

Consider a subgraph $G'$ of $G$ obtained by choosing
every edge independently at random with probability $p=\frac{1}{k!}m^{-1/q}$.
Then the expected number of edges in $G'$ is $mp$ and the expected number of copies of $K^{(k)}_{r,\ldots,r}$ in $G'$ is at most 
$(k!)^rp^{r^k}{m \choose r}$. Delete one edge from every copy of 
$K^{(k)}_{r,\ldots,r}$ contained in $G'$. This gives a $K^{(k)}_{r,\ldots,r}$-free subgraph of $G$ with at least
$$pm- (k!)^rp^{r^k}{m \choose r} \geq \frac{1}{2k!}m^{\frac{q-1}{q}} $$
expected edges. Hence, there exists a choice of $G'$ which produces a $K^{(k)}_{r,\ldots,r}$-free subgraph of $G$ of this size.
\end{proof}

We can again show that this estimate is tight when $G$ is an appropriately chosen $k$-partite $k$-graph.

\begin{theorem} \label{th4}
Let $k, r \geq 2$,  $q=\frac{r^k-1}{r-1}$ and let $G$ be a complete $k$-partite $k$-graph with parts $U_i, 1\leq i \leq k$, such that $|U_i|=m^{r^{i-1}/q}$. Then $G$ has $m$ edges and the largest $K^{(k)}_{r,\ldots,r}$-free subgraph of $G$ has $O(m^{(q-1)/q})$ edges.
\end{theorem}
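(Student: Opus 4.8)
The plan is to verify the edge count directly and then establish the upper bound by proving that there is a constant $C=C(k,r)$ such that every subgraph $G'\subseteq G$ with more than $Cm^{(q-1)/q}$ edges already contains a copy of $K^{(k)}_{r,\dots,r}$; this immediately bounds the largest $K^{(k)}_{r,\dots,r}$-free subgraph and shows that the lower bound of Theorem~\ref{th3} is tight up to a constant. The edge count is immediate: since $\sum_{i=1}^{k}r^{i-1}=(r^k-1)/(r-1)=q$, we get $e(G)=\prod_{i=1}^{k}|U_i|=m^{(1/q)\sum_i r^{i-1}}=m$. Throughout, write $n_i=|U_i|=m^{r^{i-1}/q}$ and note the two facts that make these sizes the right ones: $\prod_i n_i=m$ and $n_{i+1}=n_i^{\,r}$. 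We may assume $m$ is large in terms of $k,r$, as otherwise the bound is trivial.

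For the embedding statement I would iterate the K\H{o}v\'ari--S\'os--Tur\'an counting argument already used for Theorem~\ref{th2}. Identify $E(G')$ with a subset $E\subseteq U_1\times\cdots\times U_k$. For $0\le j\le k$, call a tuple $(S_1,\dots,S_j;u_{j+1},\dots,u_k)$, with $S_i\in\binom{U_i}{r}$ for $i\le j$ and $u_i\in U_i$ for $i>j$, a \emph{$j$-box} if every transversal of $S_1\times\cdots\times S_j\times\{u_{j+1}\}\times\cdots\times\{u_k\}$ lies in $E$, and let $N_j$ be the number of $j$-boxes; thus $N_0=e(G')$ and $N_k$ equals the number of copies of $K^{(k)}_{r,\dots,r}$ in $G'$. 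The key step is the recursion relating $N_j$ to $N_{j-1}$: group the $(j-1)$-boxes by the data $(S_1,\dots,S_{j-1};u_{j+1},\dots,u_k)$ obtained by deleting the entry $u_j$ (there are $B_j:=\prod_{i<j}\binom{n_i}{r}\prod_{i>j}n_i$ possible such groups), and observe that replacing $u_j$ by an $r$-subset of the valid extensions turns a group with $d$ extensions into $\binom{d}{r}$ $j$-boxes, so $N_j=\sum_b\binom{d_b}{r}$ with $\sum_b d_b=N_{j-1}$. Convexity of $x\mapsto\binom{x}{r}$ and Jensen's inequality then give $N_j\ge B_j\binom{N_{j-1}/B_j}{r}$.

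The heart of the matter is the computation that $B_j$ has the same order $m^{(q-1)/q}$ for \emph{every} $j$. This uses $n_i=n_1^{\,r^{i-1}}$, $n_1=m^{1/q}$, the elementary bound $(n/r)^r\le\binom{n}{r}\le n^r/r!$, and the identity $r\sum_{i<j}r^{i-1}+\sum_{i>j}r^{i-1}=(r^k-r)/(r-1)=q-1$, which together give $r^{-r(j-1)}m^{(q-1)/q}\le B_j\le(r!)^{-(j-1)}m^{(q-1)/q}$. In particular $B_j\le m^{(q-1)/q}$, so $N_{j-1}/B_j\ge N_{j-1}/m^{(q-1)/q}$ and the iteration does not decay. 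Setting $\delta_j=N_j/m^{(q-1)/q}$ and using $\binom{x}{r}\ge(x/2)^r/r!$ for $x\ge2r$, the recursion collapses to $\delta_j\ge c_{k,r}\,\delta_{j-1}^{\,r}$ whenever $\delta_{j-1}\ge2r$, for a fixed constant $c_{k,r}>0$. Starting from $\delta_0=e(G')/m^{(q-1)/q}>C$, a one-line induction shows that if $C=C(k,r)$ is large enough then $\delta_j\ge C\ (\ge 2r)$ for all $j\le k$; hence $N_k\ge m^{(q-1)/q}\ge1$ and $G'$ contains $K^{(k)}_{r,\dots,r}$. Contrapositively, every $K^{(k)}_{r,\dots,r}$-free subgraph of $G$ has $O(m^{(q-1)/q})$ edges, as claimed.

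The main obstacle is bookkeeping rather than conceptual: one must track the $(k,r)$-dependent constants ($r!$, $r^r$, $2^r$) through $k$ iterations and, in particular, verify that the ``average degree'' $N_{j-1}/B_j$ never drops below the threshold $2r$ at which the estimate $\binom{x}{r}\ge(x/2)^r/r!$ becomes usable --- both of which reduce to the single fact that all the $B_j$ are of order $m^{(q-1)/q}$. A minor technical point is that $\binom{x}{r}$ is genuinely convex only on $[r-1,\infty)$, so Jensen's inequality should be applied to the convex function that equals $\binom{x}{r}$ on $[r-1,\infty)$ and $0$ on $[0,r-1]$; since the relevant averages always exceed $2r$, this substitution is harmless.
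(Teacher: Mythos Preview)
Your argument is correct and is essentially the same iterated K\H{o}v\'ari--S\'os--Tur\'an/Erd\H{o}s counting that the paper uses: the paper packages it as an induction on $k$ via links of vertices in $U_k$ (its Proposition following Theorem~\ref{th4}), while you package it as a forward iteration on $j$-boxes, but the convexity step and the key exponent identity $r\sum_{i<j}r^{i-1}+\sum_{i>j}r^{i-1}=q-1$ are the same in both. Your observation that all the $B_j$ have order $m^{(q-1)/q}$ is precisely the content of the paper's choice of part sizes, and your handling of the convexity threshold matches the paper's convention for $\binom{t}{r}$.
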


This result follows from the next statement, which is proved using a somewhat involved extension of the counting argument used in the graph case. This technique has its origins in a paper of Erd\H{o}s~\cite{E64}. Throughout the proof, we use the notation $\binom{t}{r}$ as a shorthand for $\frac{t(t-1)\dots(t-r+1)}{r!} 1_{t \geq r-1}$, thus extending the definition of the binomial coefficient to a convex function on all of $\mathbb{R}$.

\begin{proposition}
Let $G$ be a $k$-partite $k$-graph with parts $U_i, 1\leq i \leq k$, such that $|U_i|=n^{r^{i-1}}$ and with $a \prod_{i \geq 2}|U_i|$ edges.
Then $G$ contains at least ${a - k + 1\choose r}\prod_{i \leq k-1} {|U_i| \choose r}$ copies of $K^{(k)}_{r,\ldots,r}$.
\end{proposition}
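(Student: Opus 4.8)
The plan is to run the Kővári–Sós–Turán counting argument backwards, building copies of $K^{(k)}_{r,\dots,r}$ one part at a time, starting from the part $U_k$ with the largest size and peeling off to $U_1$. The natural object to track is, for each $(k-1)$-tuple $(S_1,\dots,S_{k-1})$ with $S_i \in \binom{U_i}{r}$, the common neighbourhood in $U_k$, that is, the set $N(S_1,\dots,S_{k-1})$ of vertices $v \in U_k$ whose link contains all of $S_1 \times \dots \times S_{k-1}$ as a ``transversal $(k-1)$-box.'' A copy of $K^{(k)}_{r,\dots,r}$ is exactly a choice of such a tuple together with an $r$-subset of its common neighbourhood, so the number of copies equals $\sum_{(S_1,\dots,S_{k-1})} \binom{|N(S_1,\dots,S_{k-1})|}{r}$, where $\binom{\cdot}{r}$ is the convex extension defined in the excerpt. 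The goal is then to lower bound this sum.

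First I would count, by double counting, the total ``weight'' $\sum_{(S_1,\dots,S_{k-1})} |N(S_1,\dots,S_{k-1})|$: this equals the number of pairs consisting of an edge $e$ of $G$ and a choice, for each $i \le k-1$, of an $(r-1)$-subset of $U_i \setminus \{e \cap U_i\}$ to complete $e \cap U_i$ to an $r$-set — so it is at least (in fact essentially equal to) $e(G) \cdot \prod_{i \le k-1}\binom{|U_i|-1}{r-1}$, which one rearranges into the form $a\prod_{i\ge 2}|U_i| \cdot \prod_{i\le k-1}\binom{|U_i|-1}{r-1}$. The number of index tuples $(S_1,\dots,S_{k-1})$ is $\prod_{i\le k-1}\binom{|U_i|}{r}$. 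Applying Jensen's inequality to the convex function $\binom{\cdot}{r}$ over these $\prod_{i\le k-1}\binom{|U_i|}{r}$ tuples gives
\[
\sum_{(S_1,\dots,S_{k-1})} \binom{|N(S_1,\dots,S_{k-1})|}{r} \;\ge\; \Big(\prod_{i\le k-1}\tbinom{|U_i|}{r}\Big)\binom{\overline{d}}{r},
\]
where $\overline{d}$ is the average of $|N(\cdot)|$, namely $\overline{d} = a\prod_{i\ge 2}|U_i|\prod_{i\le k-1}\binom{|U_i|-1}{r-1} \big/ \prod_{i\le k-1}\binom{|U_i|}{r}$. The point of the special choice $|U_i| = n^{r^{i-1}}$ is that the ratio $\binom{|U_i|-1}{r-1}/\binom{|U_i|}{r} = r/|U_i|$ (up to the shift by one vertex, which is where the ``$-k+1$'' will come from), and $|U_i|^{\,?}$ telescopes: $\prod_{i\ge 2}|U_i| = \prod_{i\le k-1}|U_i| \cdot (|U_k|/|U_1|)$, and with $|U_i|=n^{r^{i-1}}$ one checks $|U_k| = |U_1|^{\,r^{k-1}}$ and more relevantly $\prod_{i\ge 2}|U_i| \big/ \prod_{i\le k-1}|U_i| = |U_k|/|U_1| = n^{r^{k-1}-1}$, arranged precisely so that $\overline{d}$ comes out to roughly $a$ (times $r^{k-1}$, which is absorbed because the $\binom{|U_i|}{r}$ in the denominator each contribute an $r/|U_i|$). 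So after the dust settles $\overline{d} \ge a - (k-1)$, giving the claimed bound ${a-k+1 \choose r}\prod_{i\le k-1}\binom{|U_i|}{r}$.

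I expect the main obstacle to be the bookkeeping that produces the clean shift $a \mapsto a - k + 1$ rather than a messier error term. The honest version of the weight count uses $\binom{|U_i|-1}{r-1}$ (one vertex of $U_i$ is already pinned by the edge), and the honest denominator uses $\binom{|U_i|}{r}$; the ratio is $r/|U_i| \cdot \frac{|U_i|-1}{|U_i|-1}\cdots$ — more precisely $\binom{|U_i|-1}{r-1}/\binom{|U_i|}{r} = r/|U_i|$ exactly, so that's fine — and the slack of one per coordinate must be extracted instead from comparing $\overline d$ computed with $|U_i|$ versus the exact common-neighbourhood count, i.e. from the inequality $\prod(|U_i|-?) \ge \prod |U_i| - (k-1)\prod_{\text{most}} |U_i|$ type estimate, using $|U_i| \ge$ everything. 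One should also double check the degenerate regime where $\overline d < r-1$: there the indicator $1_{t\ge r-1}$ in the convex extension makes the right-hand side (and the claimed bound, if $a - k + 1 < r-1$) equal to zero, so the inequality holds trivially; the Jensen step is still valid because $\binom{\cdot}{r}$ is convex on all of $\mathbb{R}$ as noted. Everything else — the telescoping of the part sizes and the final rearrangement — is routine given the exponents $r^{i-1}$.
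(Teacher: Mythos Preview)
The core gap is in your double-counting step for $\sum_{(S_1,\dots,S_{k-1})} |N(S_1,\dots,S_{k-1})|$. By definition, a pair $((S_1,\dots,S_{k-1}),v)$ contributes to this sum only when \emph{all} $r^{k-1}$ transversals of $S_1\times\cdots\times S_{k-1}$ form an edge of $G$ together with $v$; equivalently, $\sum |N(\cdot)|$ counts copies of $K^{(k)}_{r,\dots,r,1}$ in $G$. Your proposed double count via ``an edge $e$ plus $(r-1)$-subsets completing each $e\cap U_i$'' instead computes
\[
\sum_{(S_1,\dots,S_{k-1},\,v)} \bigl(\text{number of transversal edges of }S_1\times\cdots\times S_{k-1}\times\{v\}\bigr) \;=\; e(G)\prod_{i\le k-1}\binom{|U_i|-1}{r-1},
\]
which is an \emph{upper} bound on $r^{k-1}\sum |N(\cdot)|$, not a lower bound on $\sum |N(\cdot)|$. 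So the ``average'' you feed into Jensen is not a lower bound for the true average $\overline d$, and the argument collapses.

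Bounding $\sum |N(\cdot)|$ from below is in fact the entire difficulty: it is precisely the problem of counting copies of $K^{(k-1)}_{r,\dots,r}$ in the link hypergraphs $G_v$, $v\in U_k$, i.e., the $(k-1)$-partite instance of the same proposition. The paper's proof is accordingly an induction on $k$: apply the statement for $k-1$ to each link $G_v$, sum over $v$ using convexity in the link densities $a_v$ together with the identity $|U_k|=|U_{k-1}|^r$ to obtain $\sum_S d(S)\ge (a-k+2)\prod_{i\le k-1}\binom{|U_i|}{r}$, and only then perform your final Jensen step. Both the telescoping role of the part sizes $n^{r^{i-1}}$ and the origin of the shift $a\mapsto a-k+1$ (one unit lost per inductive level) live inside this induction, not in a single direct count. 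Your outermost convexity step and your handling of the degenerate regime $a\le r+k-2$ are fine; what is missing is exactly the inductive argument that produces a legitimate lower bound on $\overline d$.
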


\begin{proof} 
We prove the result by induction on $k$. We may always assume that $a > r + k - 2$, since otherwise $\binom{a - k + 1}{r} = 0$ and the result is trivial. First, suppose that $k=2$ and we have a bipartite graph with parts $U_1$ of order $n$,
$U_2$ of order $n^r$ and $an^r$ edges. Let $d(S)$ denote the number of common neighbors of a subset $S$ in $G$ and let 
$D=\sum_{S\subset U_1, |S|=r}d(S)/{n \choose r}$ be the average number of common neighbors taken over all subsets of order $r$ in $U_1$. 
Note that  
$$\sum_{S\subset U_1, |S|=r}d(S)=\sum_{x \in U_2} { d(x) \choose r} \geq {a \choose r} |U_2|={a \choose r}n^r.$$
Therefore, since $a > r$, $D \geq {a \choose r}n^r/{n \choose r} \geq a$ and the number of copies of $K_{r,r}$ in $G$ is
$$ \sum_{S\subset U_1, |S|=r}{d(S) \choose r} \geq {D \choose r}   {n \choose r} \geq {a \choose r} {n \choose r},$$
completing the proof in this case.

Now suppose we know the statement for $k-1$. For every vertex $x \in U_k$, let $G_x$ be the $(k-1)$-partite $(k-1)$-graph which is the link of vertex $x$ (i.e., the collection of all subsets of order $k-1$ which together with $x$ form an edge of $G$). Let 
$a_x \prod_{i=2}^{k-1}|U_i|$ be the number of edges in $G_x$. By definition, $\sum_x a_x=a|U_k|=an^{r^{k-1}}$. By the induction hypothesis, each 
$G_x$ contains at least ${a_x - k + 2 \choose r}\prod_{i \leq k-2} {|U_i| \choose r}$ copies of 
$K^{(k-1)}_{r,\ldots,r}$. By convexity, the total number of such copies added over all $G_x$ is at least
\begin{align*}
{a - k + 2 \choose r}n^{r^{k-1}}\prod_{i \leq k-2} {|U_i| \choose r} & = {a - k + 2 \choose r}|U_{k-1}|^r\prod_{i \leq k-2} {|U_i| \choose r}\\
& \geq r!{a - k + 2 \choose r} \prod_{i \leq k-1} {|U_i| \choose r} \geq (a - k + 2)\prod_{i \leq k-1} {|U_i| \choose r},
\end{align*}
where in the final inequality we use that $a > r + k - 2$.

For every subset $S$ which intersects every $U_i$, $i\leq k-1$, in exactly $r$ vertices, let $d(S)$ be the number of vertices $x \in U_k$ such that
$x$ forms an edge of $G$ together with every subset of $S$ of order $k-1$ which contains one vertex from every $U_i$. By the above discussion, we have that 
$$\sum_S d(S) \geq  (a - k + 2)\prod_{i \leq k-1} {|U_i| \choose r},$$ 
that is, at least the number of copies of $K^{(k-1)}_{r,\ldots,r}$ counted over all $G_x$.
On the other hand, by the definition of $d(S)$,  the number of copies of $K^{(k)}_{r,\ldots,r}$ in $G$ equals
$\sum_S {d(S) \choose r}$. Since the total number of sets $S$ is $\prod_{i \leq k-1} {|U_i| \choose r}$, the average value of $d(S)$ is at least $a-k+2$ and the result now follows by convexity.
\end{proof}

\section{Ramsey numbers and embedding large sparse hypergraphs into dense hypergraphs} \label{sec:cube}

For a graph $H$, the {\it Ramsey number} $r(H)$ is the least positive integer $N$ such that every two-coloring of the edges of the complete graph $K_N$ contains a monochromatic copy of $H$. One of the most important results in graph Ramsey theory is a theorem of Chv\'atal, R\"odl, Szemer\'edi and Trotter~\cite{CRST83} which says that for every positive integer $\Delta$ there is a constant $C(\Delta)$ such that every graph $H$ with $n$ vertices and maximum degree $\Delta$ satisfies $r(H) \leq C(\Delta) n$. That is, the Ramsey number of bounded-degree graphs grows linearly in the number of vertices.

The original proof of this theorem used the regularity lemma and gives a very poor tower-type bound for $C(\Delta)$. Following improvements by Eaton~\cite{E98} and Graham, R\"odl and Ruci\'nski~\cite{GRR00}, the bound $C(\Delta) \leq 2^{c \Delta \log \Delta}$ was given by the authors~\cite{CFS12}. This is close to optimal, since Graham, R\"odl and Ruci\'nski~\cite{GRR00, GRR01} showed that there exist graphs $H$ (even bipartite graphs) with $n$ vertices and maximum degree $\Delta$ for which $r(H) \geq 2^{c' \Delta} n$. 
 
If we assume that $H$ is bipartite, work of Conlon~\cite{C09} and Fox and Sudakov~\cite{FS09} shows that $r(H) \leq 2^{c \Delta} n$ for any $H$ with $n$ vertices and maximum degree $\Delta$. By the results of Graham, R\"odl and Ruci\'nski mentioned above, this is optimal up to the constant $c$. The bound proved by Fox and Sudakov~\cite{FS09}, $r(H) \leq \Delta 2^{\Delta + 5} n$, remains the best known. Here we remove the $\Delta$ factor from this bound.

\begin{theorem} \label{thm:bip}
For every bipartite graph $H$ on $n$ vertices with maximum degree $\Delta$, $r(H) \leq 2^{\Delta + 6}n$.
\end{theorem}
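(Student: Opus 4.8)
The plan is to follow the standard embedding-into-a-dense-graph strategy used by Fox and Sudakov, but to save the factor of $\Delta$ by embedding the two colour classes of $H$ one vertex at a time in a single pass, rather than grouping vertices by colour class and embedding a whole class at once. Given a two-colouring of $K_N$ with $N = 2^{\Delta+6}n$, pass to the majority colour, say red, which spans a graph with edge density at least $1/2$. The first step is a standard ``dense pair cleaning'' reduction: find two large disjoint vertex sets $A$ and $B$, each of size roughly $2^{\Delta+5}n$, such that the red bipartite graph between $A$ and $B$ is $(\varepsilon,d)$-dense in the one-sided sense we need, with $d \ge 1/4$ and $\varepsilon$ a small constant (this uses a dependent-random-choice or a simple averaging argument, and costs only a constant factor). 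Write $H$ with parts $X$ and $Y$; vertices of $X$ will be embedded into $A$, vertices of $Y$ into $B$.

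Next, order the vertices of $H$ as $v_1, \dots, v_n$ so that each $v_i$ has at most $\Delta$ neighbours among $v_1,\dots,v_{i-1}$ (trivially true since $\Delta(H)=\Delta$), and embed greedily. At each step we maintain, for every not-yet-embedded vertex $u$, a ``candidate set'' $C(u)$ in $A$ or $B$ consisting of vertices adjacent (in red) to all already-embedded neighbours of $u$; we require $|C(u)| \ge 2^{-d_i}\cdot(\text{current side size})$ where $d_i$ is the number of embedded neighbours of $u$. When we embed $v_i$, we must pick a vertex of $C(v_i)$ that is ``good'' for every still-unembedded neighbour $w$ of $v_i$, meaning it keeps $|C(w)|$ from shrinking below a $1/2$ fraction of its current size. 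The key point is that each $v_i$ has at most $\Delta$ unembedded neighbours, and for each such $w$ the number of bad choices in $C(v_i)$ is at most a small constant fraction of $|C(v_i)|$ provided the red bipartite graph between the relevant candidate sets is still dense; summing over the $\le \Delta$ neighbours, fewer than half of $C(v_i)$ is bad, so a valid choice exists as long as $|C(v_i)| \ge 2$. Since we never let a candidate set drop below $2^{-\Delta}$ times the side size $\approx 2^{\Delta+5}n > 2^{\Delta+4}$, we always have $|C(v_i)| \ge 2^{4}n \ge 2$, and the embedding goes through.

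The density bookkeeping is what makes ``a small constant fraction is bad'' true, and this is the step I expect to be the main obstacle: a naive greedy argument loses a factor related to how fast densities can degrade, and it is precisely here that the earlier approaches paid the extra $\Delta$ (by restarting the density count for each colour class). The fix is to choose the initial pair $(A,B)$ to be not merely dense but \emph{super-regular} in a one-sided sense, or equivalently to run a short ``bad-vertex deletion'' preprocessing so that every vertex of $A$ has red-degree at least $d|B|$ into $B$ and vice versa, and then to observe that intersecting with one more neighbourhood of relative size $\ge d$ can drop a candidate set's relative size by at most a controlled amount; iterating this at most $\Delta$ times and tracking the product of the density factors shows that throughout the process the relevant bipartite densities stay bounded below by $(d/2)^{O(1)}$, a constant independent of $\Delta$. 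Choosing $d = 1/4$ and unwinding the constants, one checks $N = 2^{\Delta+6}n$ suffices; the $+6$ absorbs the constant-factor losses from the cleaning step, the density slack, and the floor/ceiling sloppiness. Finally, since the whole argument only ever uses red edges inside $A \cup B$ and produces a red copy of $H$, and the symmetric statement holds if blue were the majority colour, the theorem follows.
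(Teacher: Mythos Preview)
Your proposal has a genuine gap at precisely the step you yourself flag as ``the main obstacle.'' You assert that after a one-sided super-regularity cleaning, ``the relevant bipartite densities stay bounded below by $(d/2)^{O(1)}$, a constant independent of $\Delta$,'' but you give no mechanism for this, and it is not true in general. A minimum-degree condition (``every vertex of $A$ has red-degree at least $d|B|$ into $B$ and vice versa'') says nothing about the edge density between two specific subsets $C(v_i)\subset A$ and $C(w)\subset B$ once those candidate sets have shrunk to relative size $2^{-\Delta}$; to control all such pairs via any regularity-type hypothesis you would need a regularity parameter $\varepsilon \ll 2^{-\Delta}$, which is exactly the exponential loss you are trying to avoid. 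Your sentence ``intersecting with one more neighbourhood of relative size $\ge d$ can drop a candidate set's relative size by at most a controlled amount; iterating this at most $\Delta$ times \ldots'' in fact yields a product of $\Delta$ factors each of order $d$, i.e.\ $d^{\Theta(\Delta)}$, not a constant. What you have sketched is essentially the greedy argument that gives the Fox--Sudakov bound $\Delta\,2^{\Delta+5}n$; the extra $\Delta$ arises from exactly this bookkeeping, and your ``fix'' restates the desired conclusion rather than proving it.

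The paper's proof takes a genuinely different route. It retains the dependent-random-choice step, producing a set $U$ in which all but a $\delta$-fraction of the $\Delta$-subsets have at least $n$ common red neighbours, and then passes to the auxiliary hypergraph picture. The new idea is to replace the greedy hypergraph embedding (which requires $\delta \le (4\Delta)^{-\Delta}$) by a \emph{random} embedding analysed with the Lov\'asz Local Lemma: map the vertices of one side of $H$ into $U$ uniformly and independently, and show via the local lemma that with positive probability the map is injective and every $\Delta$-set arising as a neighbourhood lands on a good set. Because each bad event depends on only about $\Delta^2$ others, the local lemma needs only $\delta \lesssim 1/\Delta^{2}$ rather than $\delta \lesssim \Delta^{-\Delta}$, and this exponential-to-polynomial relaxation in $\delta$ is what removes the factor of $\Delta$ from the final bound.
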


This theorem allows us to give a slight improvement on the Ramsey number of cubes. The {\it $d$-cube} $Q_d$ is the $d$-regular graph on $2^d$ vertices whose vertex set is $\{0,1\}^d$ and where two vertices are adjacent if they differ in exactly one coordinate. Burr and Erd\H{o}s~\cite{BE75} conjectured that $r(Q_d)$ is linear in the number of vertices $|Q_d|$. After several improvements over the trivial bound $r(Q_d) \leq r(|Q_d|) \leq 4^{|Q_d|} = 2^{2^{d+1}}$ by Beck~\cite{B83}, Graham, R\"odl and Ruci\'nski~\cite{GRR01} and Shi~\cite{S01,S07}, Fox and Sudakov~\cite{FS09} obtained the bound $r(Q_d) \leq d 2^{2d + 5}$, which is nearly quadratic in the number of vertices. This follows immediately from their general upper bound on the Ramsey numbers of bipartite graphs. Theorem~\ref{thm:bip} improves this to a true quadratic bound.

\begin{corollary} \label{cor:cube}
For every positive integer $d$, $r(Q_d) \leq 2^{2d + 6}$.
\end{corollary}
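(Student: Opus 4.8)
The plan is to derive this immediately from Theorem~\ref{thm:bip}, since $Q_d$ is a bipartite graph whose relevant parameters are easy to read off. First I would record that $Q_d$ is indeed bipartite: the bipartition is given by the parity of the coordinate sum, since adjacent vertices of $\{0,1\}^d$ differ in exactly one coordinate and hence have opposite parities. Next I would note that $Q_d$ has $n = 2^d$ vertices and is $d$-regular, so its maximum degree is $\Delta = d$.

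Applying Theorem~\ref{thm:bip} with $H = Q_d$, $n = 2^d$, and $\Delta = d$ then gives
\[
r(Q_d) \leq 2^{\Delta + 6} n = 2^{d + 6} \cdot 2^{d} = 2^{2d + 6},
\]
which is the claimed bound. No further argument is needed.

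Since the corollary is a direct specialization of the theorem, there is essentially no obstacle here; the only thing to be careful about is confirming the three inputs (bipartiteness, vertex count $2^d$, maximum degree $d$), all of which are standard facts about the hypercube graph.
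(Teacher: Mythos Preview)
Your proposal is correct and matches the paper's approach: the corollary is stated immediately after Theorem~\ref{thm:bip} as a direct specialization, with no separate proof given. Plugging in $n=2^d$ and $\Delta=d$ for the bipartite cube $Q_d$ is exactly what is intended.
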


We note that results similar to Theorem~\ref{thm:bip} and Corollary~\ref{cor:cube} were proved by Lee~\cite{L15} in his recent breakthrough work on the Ramsey numbers of degenerate graphs. However, the method he uses is very different to ours. To understand our approach, it will be useful to first describe the method used in~\cite{FS09} to prove the bound $r(H) \leq \Delta 2^{\Delta+5}n$.

Suppose that $H$ is a bipartite graph with $n$ vertices and parts $V_1$ and $V_2$, where every vertex in $V_1$ has degree at most $\Delta$ and every vertex in $V_2$ has degree at most $k$. The proof from~\cite{FS09} has two main ingredients. The first ingredient is a powerful probabilistic technique known as dependent random choice (see, for example, the survey~\cite{FS11} for a discussion of its many variants and applications) which allows one to find a large vertex subset $U$ in a dense graph $G$ such that almost all subsets of at most $k$ vertices from $U$ have many common neighbors. 

To prove an upper bound on the Ramsey number of $H$, we take $G$ to be the denser of the two monochromatic graphs which edge-partition the complete graph $K_N$, so that $G$ has edge density at least $1/2$. We use the dependent random choice lemma to find a subset $U$ with the property that almost every subset with at most $k$ vertices has at least $n$ common neighbors. We then form an auxiliary hypergraph $\mathcal{G}$ on $U$ by letting a subset $S$ with at most $k$ vertices be an edge of $\mathcal{G}$ if the vertices of $S$ have at least $n$ common neighbors in $G$. We also define a hypergraph $\mathcal{H}$ on $V_1$ by saying that a subset $T$ with at most $k$ vertices is an edge if there is a vertex of $H$ (which will necessarily be in $V_2$) whose neighborhood is $T$. It is easy to show that if $\mathcal{H}$ is a subhypergraph of $\mathcal{G}$, then $H$ is a subgraph of $G$. Thus, to prove an upper bound on Ramsey numbers, it suffices to show that every sparse hypergraph is a subhypergraph of every not much larger but very dense hypergraph. An embedding lemma of this form is the second ingredient used in~\cite{FS09}.

To state the appropriate lemma from~\cite{FS09},  we say that a hypergraph is down-closed if $e_1 \subset e_2$ and $e_2 \in E$ implies $e_1 \in E$.

\begin{lemma}\cite{FS09} \label{lem:FSembed}
Let $\mathcal{H}$ be an $n$-vertex hypergraph with maximum degree $\Delta$ such that each edge of $\mathcal{H}$ has size at most $k$ and suppose that $\delta \leq (4\Delta)^{-k}$. If $\mathcal{G}$ is a down-closed hypergraph on vertex set $U$ with $N \geq 4n$ vertices and more than $\left(1-\delta\right){N \choose k}$  edges of size $k$, then there is a copy of $\mathcal{H}$ in $\mathcal{G}$.
\end{lemma}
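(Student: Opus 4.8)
The plan is to embed $\mathcal{H}$ into $\mathcal{G}$ greedily, one vertex at a time, using a union bound to show that at each step a valid choice survives. Fix an ordering $v_1, \dots, v_n$ of the vertices of $\mathcal{H}$ and embed them in this order, maintaining a partial injection $\phi$ from $\{v_1,\dots,v_i\}$ into $U$. The key invariant I want to preserve is that the image so far is ``flexible'': more precisely, after embedding $v_1,\dots,v_i$, for every edge $e$ of $\mathcal{H}$ all of whose already-embedded vertices lie in $\{v_1,\dots,v_i\}$, the set $\phi(e \cap \{v_1,\dots,v_i\})$ should be extendable, i.e.\ there should still be a large set of candidate images in $U$ for the remaining vertices of $e$ such that the union remains an edge of $\mathcal{G}$. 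Since $\mathcal{G}$ is down-closed and only its size-$k$ edges are controlled, a clean way to phrase ``large'' is: a set $A \subseteq U$ with $|A| = j \le k$ is \emph{good} if the number of $(k-j)$-subsets $B$ of $U \setminus A$ with $A \cup B \in E(\mathcal{G})$ is at least $(1 - j\delta\cdot(\text{something}))\binom{N-j}{k-j}$; down-closedness then guarantees every subset of $A$ is also the correct size of edge in $\mathcal{G}$.

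The core counting step is the following: because $\mathcal{G}$ has more than $(1-\delta)\binom{N}{k}$ edges of size $k$, a simple averaging (or double-counting over flags) shows that for each $j \le k$ all but at most a $\delta\binom{N}{k}/\binom{N-j}{k-j} \le \delta \cdot (N/(N-k))^{j} \cdot (\text{binomial ratio})$ fraction of the $j$-subsets of $U$ fail to be good, and one checks $(N/(N-k)) \le 2$ using $N \ge 4n \ge 4k$ (here one should be slightly careful with the exact constants, but $4\Delta \ge 4k$ and the slack in $\delta \le (4\Delta)^{-k}$ are generous). Now I do the greedy embedding. When it is time to place $v_{i+1}$, its already-embedded neighbours span at most $\Delta$ edges of $\mathcal{H}$; by the invariant, for each such edge the current partial image is good, so each such edge rules out at most a small fraction of candidate vertices for $v_{i+1}$, and additionally we must avoid the $i < n$ vertices already used and keep the invariant for edges of $\mathcal{H}$ meeting $v_{i+1}$. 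Summing these bad fractions over the at most $\Delta$ relevant edges, plus the at most $n$ used vertices, and using $\delta \le (4\Delta)^{-k}$ together with $N \ge 4n$, the total number of forbidden choices for $\phi(v_{i+1})$ is strictly less than $N$, so a valid image exists.

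The main obstacle — and where the $(4\Delta)^{-k}$ threshold and the factor $4$ in $N \ge 4n$ are spent — is making the invariant self-sustaining: choosing $\phi(v_{i+1})$ must not only respect the $\le\Delta$ edges through $v_{i+1}$ that are ``active'' now, but must leave those edges (and their not-yet-complete images) still good for the sake of future steps, which means controlling how much ``goodness'' degrades each time we pin down one more coordinate of an edge. Since each edge has size $\le k$, its image gets pinned at most $k$ times, so the accumulated loss is at most a factor like $k\delta$ times a binomial ratio bounded by $4^{k}$ or so, and the budget $\delta \le (4\Delta)^{-k}$ absorbs exactly this, while the $\Delta$ in the denominator absorbs the union bound over the $\Delta$ edges through a vertex. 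I expect the proof to be short once the definition of ``good'' is chosen so that down-closedness does the bookkeeping for all subset-sizes at once; getting that definition right is the only delicate point.
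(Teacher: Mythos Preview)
The paper does not actually prove this lemma: it is quoted from \cite{FS09}, and the only remark in the present paper is that ``the proof of this lemma uses a greedy embedding process'' before moving on to the local-lemma improvement in Lemma~\ref{lem:embed}. Your proposal is precisely that greedy embedding strategy, so you are aligned with what the paper says about the cited proof.

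Your outline is correct, and you have identified the only genuinely delicate point yourself: pinning down the definition of ``good'' so that goodness is inherited when one more vertex of a partial edge is fixed. In the \cite{FS09} argument this is done recursively in $j$: a $k$-set is good iff it is an edge of $\mathcal{G}$, and a $j$-set (for $j<k$) is good iff all but at most a $(2\Delta)^{j-k}$-fraction of its one-vertex extensions are good $(j{+}1)$-sets. One then checks by downward induction that at most a $2\delta$-fraction of $j$-sets are bad (using the hypothesis on $k$-edges at the base and a simple averaging for the step), and that every good set is an edge by down-closedness. With this definition the invariant is self-sustaining essentially for free: when you place $v_{i+1}$, each of the $\le \Delta$ edges through it forbids at most a $(2\Delta)^{-1}$-fraction of the current candidate pool, so at least half the pool survives, and since the pool has size at least $N/2 \ge 2n$ you can also avoid the $<n$ already-used vertices. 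Your alternative bookkeeping (``accumulated loss at most $k\delta$ times a binomial ratio bounded by $4^k$'') can be made to work too, but the recursive definition is cleaner and is what makes the constants $(4\Delta)^{-k}$ and $N\ge 4n$ come out exactly.
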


The proof of this lemma uses a greedy embedding process. However, we may improve it by a simple application of the Lov\'asz local lemma, which we now recall.

\begin{lemma} \label{lem:LLL}
Let $A_1,\ldots,A_n$ be events in an arbitrary probability space. A directed graph $D=(V,E)$ on the set of vertices $V=\{1,\ldots,n\}$ is called a dependency digraph for the events $A_1,\ldots,A_n$ if for each $i$, $1 \leq i \leq n$, the event $A_i$ is mutually independent of all the events $\{A_j:(i,j) \not \in E\}$. Suppose $D=(V,E)$ is a dependency digraph for the above events and suppose there are real numbers $x_1,\ldots,x_n$ such that $0 \leq x_i < 1$ and $\textrm{Pr}[A_i] \leq x_i \prod_{(i,j) \in E} (1-x_j)$ for all $1 \leq i \leq n$. Then $$\textrm{Pr}\left[ \bigwedge_{i=1}^n \bar A_i \right] \geq \prod_{i=1}^n \left(1-x_i\right).$$
In particular, with positive probability no event $A_i$ holds.
\end{lemma}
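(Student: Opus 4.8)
The plan is to prove, by induction on $|S|$, the stronger assertion that for every $S \subseteq \{1,\dots,n\}$ and every index $i \notin S$ one has $\Pr[A_i \mid \bigwedge_{j\in S}\bar A_j] \le x_i$; the statement of the lemma then follows by the chain rule. (The same induction also shows $\Pr[\bigwedge_{j\in S}\bar A_j] > 0$ for all $S$, so that the conditional probabilities below are all well defined — a routine point I would fold in.) The base case $S=\emptyset$ is immediate, since $\Pr[A_i] \le x_i\prod_{(i,j)\in E}(1-x_j) \le x_i$ by hypothesis.

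For the inductive step I would partition $S$ using the dependency digraph into $S_1 = \{\,j\in S : (i,j)\in E\,\}$ and $S_2 = S\setminus S_1$, and expand
$$\Pr\Bigl[A_i \mid \bigwedge_{j\in S}\bar A_j\Bigr] = \frac{\Pr\bigl[A_i \wedge \bigwedge_{j\in S_1}\bar A_j \mid \bigwedge_{\ell\in S_2}\bar A_\ell\bigr]}{\Pr\bigl[\bigwedge_{j\in S_1}\bar A_j \mid \bigwedge_{\ell\in S_2}\bar A_\ell\bigr]}.$$
I would bound the numerator above by $\Pr[A_i \mid \bigwedge_{\ell\in S_2}\bar A_\ell]$, which equals $\Pr[A_i]$ because $(i,\ell)\notin E$ for all $\ell\in S_2$ and hence $A_i$ is mutually independent of $\{A_\ell : \ell\in S_2\}$; so the numerator is at most $x_i\prod_{(i,j)\in E}(1-x_j)$. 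For the denominator, writing $S_1=\{j_1,\dots,j_r\}$, the chain rule gives it as $\prod_{t=1}^r\bigl(1-\Pr[A_{j_t}\mid \bigwedge_{s<t}\bar A_{j_s}\wedge\bigwedge_{\ell\in S_2}\bar A_\ell]\bigr)$, and since each conditioning event here involves fewer than $|S|$ of the $A_j$, the induction hypothesis bounds the inner probabilities by $x_{j_t}$, so the denominator is at least $\prod_{t=1}^r(1-x_{j_t}) \ge \prod_{(i,j)\in E}(1-x_j)$ — the last step because $S_1\subseteq\{j:(i,j)\in E\}$ and every factor lies in $(0,1]$. Dividing, $\prod_{(i,j)\in E}(1-x_j)$ cancels and the quotient is at most $x_i$, which closes the induction. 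Finally, the chain rule applied to the whole family yields $\Pr[\bigwedge_{i=1}^n\bar A_i] = \prod_{i=1}^n\bigl(1-\Pr[A_i\mid\bigwedge_{j<i}\bar A_j]\bigr) \ge \prod_{i=1}^n(1-x_i) > 0$, using the strengthened claim with $S=\{1,\dots,i-1\}$ at each stage.

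There is no analytic content here; the part to be careful with is the bookkeeping in the inductive step — checking that every conditioning set appearing in the expansion of the denominator has size strictly less than $|S|$, so that the induction hypothesis really applies, and invoking \emph{mutual} independence (as opposed to pairwise independence) at precisely the step where $\Pr[A_i\mid\bigwedge_{\ell\in S_2}\bar A_\ell]$ is replaced by $\Pr[A_i]$. The degenerate case $S_1=\emptyset$ causes no trouble: the denominator is then an empty product equal to $1$, and the desired bound reduces directly to $\Pr[A_i]\le x_i$.
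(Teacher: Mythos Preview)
Your proof is the standard induction argument for the Lov\'asz local lemma and is correct. Note, however, that the paper does not actually prove this lemma: it is merely stated (``which we now recall'') as a well-known tool, so there is no proof in the paper to compare against. What you have written is precisely the classical proof one finds in textbooks such as Alon--Spencer, and the bookkeeping points you flag (that each conditioning set in the denominator has size strictly less than $|S|$, and that mutual rather than pairwise independence is needed for the numerator bound) are exactly the places where care is required.
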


Using this result, we now improve Lemma~\ref{lem:FSembed} as follows.

\begin{lemma} \label{lem:embed}
Let $\mathcal{H}$ be an $n$-vertex hypergraph with maximum degree $\Delta$ such that each edge of $\mathcal{H}$ has size at most $k$ and suppose that $\delta \leq \frac{1}{4k\Delta}2^{-8kn/N}$. If $\mathcal{G}$ is a down-closed hypergraph on vertex set $U$ with $N \geq 16n$ vertices and more than $\left(1-\delta\right){N \choose k}$  edges of size $k$, then there is a copy of $\mathcal{H}$ in $\mathcal{G}$.
\end{lemma}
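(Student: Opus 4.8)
The plan is to embed the vertices of $\mathcal H$ into $U$ one at a time, in a fixed order $v_1, v_2, \dots, v_n$, but instead of making greedy deterministic choices as in the proof of Lemma~\ref{lem:FSembed}, I would choose each image uniformly at random from a suitable pool and apply the Lov\'asz local lemma (Lemma~\ref{lem:LLL}) to show that with positive probability none of the embedded edges is a non-edge of $\mathcal G$. Concretely, partition $U$ into $n$ blocks $U_1, \dots, U_n$, each of size $N/n \geq 16$, and embed $v_t$ to a uniformly random vertex $\phi(v_t) \in U_t$, all choices independent. For each edge $e = \{v_{i_1}, \dots, v_{i_j}\}$ of $\mathcal H$ (so $j \leq k$), let $A_e$ be the bad event that $\{\phi(v_{i_1}), \dots, \phi(v_{i_j})\}$ is \emph{not} an edge of $\mathcal G$; note here we use that $\mathcal G$ is down-closed, so it suffices to control the full-size-$k$ edges. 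If all $A_e$ are avoided, then $\phi$ is an injective embedding of $\mathcal H$ into $\mathcal G$ (injectivity is automatic since distinct vertices land in distinct blocks), which is exactly what we want.

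The first key step is to bound $\Pr[A_e]$. For a fixed edge $e$ of size $j$, extend it arbitrarily to a $k$-set of vertices in distinct blocks; the image is then a uniformly random choice of one vertex from each of $k$ distinct blocks, hence a uniformly random $k$-set among a collection of $\prod (N/n) = (N/n)^k$ such $k$-sets, each of which is a genuine $k$-subset of $U$. Since $\mathcal G$ has more than $(1-\delta)\binom{N}{k}$ edges of size $k$, the number of \emph{non}-edge $k$-subsets is at most $\delta \binom{N}{k} \le \delta N^k/k!$, so
$$\Pr[A_e] \le \frac{\delta N^k / k!}{(N/n)^k} = \frac{\delta n^k}{k!} \le \delta n^k.$$
Actually, to get the stated $2^{-8kn/N}$ dependence I would instead take blocks of size exactly $N/n$ and bound the probability that the random $k$-set (sampled one coordinate per block) hits the non-edge set; a short computation comparing sampling-with-distinct-blocks to uniform $k$-subsets costs a factor of at most $2^{O(kn/N)}$ (this is where the hypothesis $N \ge 16n$ and the precise exponent $8kn/N$ enter), giving $\Pr[A_e] \le \delta \cdot 2^{8kn/N} \cdot (\text{small constant})$.

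The second key step is the dependency structure and the choice of the $x_e$. Two events $A_e, A_{e'}$ are dependent only if $e$ and $e'$ share a vertex of $\mathcal H$; since $\mathcal H$ has maximum degree $\Delta$ and each edge has at most $k$ vertices, each edge $e$ meets at most $k\Delta$ other edges (including itself we may crudely say $k\Delta$). Taking all $x_e$ equal to a common value $x$, the local lemma condition $\Pr[A_e] \le x(1-x)^{k\Delta}$ is satisfied, for instance, with $x = 2 \Pr[A_e]$ provided $\Pr[A_e] \le \frac{1}{4k\Delta}$; plugging in the bound from the previous step, the hypothesis $\delta \le \frac{1}{4k\Delta} 2^{-8kn/N}$ is exactly what makes this work. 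Then Lemma~\ref{lem:LLL} gives that with positive probability no $A_e$ occurs, completing the embedding.

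The main obstacle I expect is the second step of the probability estimate: getting the clean exponential factor $2^{8kn/N}$ rather than a cruder polynomial loss. Sampling a $k$-set by picking one vertex from each of $k$ distinct blocks of size $N/n$ is not the same as sampling a uniform $k$-subset of $U$, and one must check that the ratio of the two distributions (restricted to any fixed $k$-set that is "block-respecting") is bounded by $2^{8kn/N}$; this amounts to estimating $\binom{N}{k} \big/ \big[(N/n)^k \cdot (\text{number of block patterns})\big]$ or, more simply, bounding $\prod_{i=0}^{k-1}\frac{N}{N-i}$ type products, and is where the constant $16$ and the exponent $8$ are tuned. Everything else is a routine application of the local lemma.
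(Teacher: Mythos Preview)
Your overall plan---random embedding plus the local lemma---is exactly the paper's, but the implementation via blocks has a genuine gap at the probability estimate, and the source of the factor $2^{-8kn/N}$ is misidentified.

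With a \emph{fixed} partition $U = U_1 \cup \cdots \cup U_n$, the bound $\Pr[A_e] \le \delta \cdot 2^{O(kn/N)}$ is simply false. For an edge $e$ of size $\ell$, the image $\phi(e)$ is uniform over the $(N/n)^\ell$ transversals of the $\ell$ relevant blocks, while the only global information is that at most $\delta\binom{N}{\ell}$ of all $\ell$-subsets of $U$ are non-edges. Nothing stops those non-edges from concentrating entirely on your blocks: take $\ell = k$, $N = 16n$, $\Delta$ bounded, $N$ large, and let the non-edge $k$-sets of $\mathcal G$ be precisely the $(N/n)^k = 16^k$ transversals of $U_{i_1},\dots,U_{i_k}$ (this is down-closed and has fewer than $\delta\binom{N}{k}$ non-edges); then $\Pr[A_e] = 1$. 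The product $\prod_{i=0}^{k-1} N/(N-i)$ you invoke compares sampling with and without replacement from \emph{all of} $U$; the ratio you actually need is $(n/N)^\ell \binom{N}{\ell}$, which is of order $n^\ell/\ell!$, not $2^{O(kn/N)}$. Randomising the partition restores the marginal bound $\Pr[A_e] \le \delta$, but then all the events $A_e$ become dependent through the common partition and the dependency degree for the local lemma is no longer $k\Delta$.

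The paper avoids this trade-off by dropping blocks altogether: take $f : V(\mathcal H) \to U$ uniform and fully independent (so with replacement). Then, conditional on $f|_e$ being injective, $f(e)$ is a genuinely uniform $|e|$-subset of $U$, giving $\Pr[B_e] \le \delta$ with no correction factor. Injectivity is enforced separately via $\binom{n}{2}$ additional bad events $A_{uv} = \{f(u)=f(v)\}$, each of probability $1/N$. The factor $2^{-8kn/N}$ in the hypothesis on $\delta$ then arises not from any sampling comparison but from the asymmetric LLL condition for $B_e$: each $B_e$ depends on up to $kn$ collision events $A_{uv}$, contributing a factor $(1-x)^{kn}$ with $x = 4/N$, and $(1-4/N)^{kn} \ge 4^{-4kn/N} = 2^{-8kn/N}$ is exactly this term. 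So the exponent is the LLL price of handling collisions, not a distributional correction---which is why your attempt to locate it in the sampling step could not succeed.
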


\begin{proof}
When $k = 1$, the result follows from Lemma~\ref{lem:FSembed}. We may therefore assume that $k \geq 2$.

Consider a uniform random mapping $f:V(\mathcal{H}) \rightarrow V(\mathcal{G})$. For two vertices $uv$ of $\mathcal{H}$, consider the bad event $A_{uv}$ that $f(u)=f(v)$. For an edge $e$ of $\mathcal{H}$, we also consider the bad event $B_e$ that $|f(e)|=|e|$ (the vertices of $e$ map to distinct vertices), but $f(e)$ is not an edge of $\mathcal{G}$. 

Clearly, $\textrm{Pr}[A_{uv}]=1/N$. We also have $\textrm{Pr}[B_e] \leq\delta$. Indeed, suppose $|e|=\ell$. If $f(e)$ is not an edge in $\mathcal{G}$, then, since $\mathcal{G}$ is down-closed, none of the ${N-\ell \choose k-\ell}$ $k$-sets containing it are in $\mathcal{G}$ either. However, the number of pairs consisting of an edge of size $k$ which is not in $\mathcal{G}$ and a subset of size $\ell$ is at most $\delta \binom{N}{k} \binom{k}{\ell}$. It follows that the number of $\ell$-sets which are not edges of $\mathcal{G}$ is at most 
$$\delta {N \choose k} \cdot \frac{{k \choose \ell}}{{N-\ell \choose k-\ell}}=\delta{N \choose \ell},$$
which implies that $\textrm{Pr}[B_e] \leq\delta$.

Provided $\{u',v'\}$ and $e$ are disjoint from $\{u,v\}$, the event $A_{uv}$ is independent of $A_{u'v'}$ and $B_e$. Therefore, $A_{uv}$ is dependent on at most $2(n-2)$ of the events $A_{u'v'}$ and at most $2 \Delta$ of the events $B_e$. Similarly, provided $\{u,v\}$ and $e'$ are disjoint from $e$, the event $B_e$ is independent of $A_{uv}$ and $B_{e'}$. Therefore, $B_e$ is dependent on at most ${n \choose 2}-{n-|e| \choose 2} < kn$ of the events $A_{uv}$ and at most $k \Delta$ of the events $B_{e'}$.

We now apply the local lemma. For each $A_{uv}$, we let the corresponding $x_i$ be $x$ and, for each $B_e$, we let the corresponding $x_i$ be $y$. Let $x=\frac{4}{N}$ and $y=\frac{1}{2k\Delta}$. As $x,y \leq 1/2$, we have $1-x \geq 4^{-x}$ and $1-y \geq 4^{-y}$. Therefore, since $N \geq 16 n$ and $k \geq 2$,
$$x(1-x)^{2n}(1-y)^{2\Delta} \geq \frac{4}{N}4^{-8n/N}4^{-1/k} \geq \frac{1}{N}=\textrm{Pr}[A_{uv}]$$ 
and 
$$ y(1-y)^{k\Delta}(1-x)^{kn} \geq \frac{1}{2k\Delta}4^{-1/2}4^{-4kn/N} \geq \delta \geq \textrm{Pr}[B_e].$$
By Lemma~\ref{lem:LLL}, the probability that none of the bad events $A_{uv}$ and $B_e$ occur is positive and, therefore, $\mathcal{H}$ is a subhypergraph of $\mathcal{G}$. 
\end{proof}

To prove Theorem~\ref{thm:bip}, we need an appropriate variant of the dependent random choice lemma. The version we use follows easily from Lemma~2.1 of~\cite{FS09}.

\begin{lemma} \label{lem:DRC}
Let $G$ be a bipartite graph with parts $V_1$ and $V_2$ of order $N$ and at least $\epsilon N^2$ edges, where $N \geq \epsilon^{-k} \max(bn, 4k)$. Then there is a subset $U \subset V_1$ with $|U| \geq 2^{-1/k}\epsilon^k N$ such that the number of $k$-sets $S \subset U$ with $|N(S)|< n$ is less than $2^{k+1} b^{-k}{|U| \choose k}$. 
\end{lemma}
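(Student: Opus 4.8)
The plan is to deduce Lemma~\ref{lem:DRC} from Lemma~2.1 of~\cite{FS09} by a standard dependent random choice argument, choosing the number of sampled vertices appropriately and then discarding the ``bad'' $k$-sets. First I would pick $t$ random vertices $v_1,\dots,v_t$ from $V_2$, with repetition, where $t=k$ is the natural choice suggested by the exponent $\epsilon^k$ in the statement, and let $A=N(v_1)\cap\cdots\cap N(v_t)$ be their common neighbourhood in $V_1$. Since $G$ has at least $\epsilon N^2$ edges, the average degree of a vertex in $V_2$ is at least $\epsilon N$, so by convexity $\mathbb{E}[|A|]\ge \epsilon^k N$; this is the first ingredient. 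The second ingredient controls the number of $k$-sets $S\subset A$ whose common neighbourhood $N(S)$ in $V_2$ is small: for a fixed $k$-set $S$ with $|N(S)|<n$, the probability that $S\subset A$ is $(|N(S)|/N)^t<(n/N)^k$, so the expected number of such $S$ inside $A$ is at most $\binom{N}{k}(n/N)^k\le n^k/k!$.

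Next I would combine these two expectations with the inequality $N\ge \epsilon^{-k}\max(bn,4k)$ to show that with positive probability $|A|$ is large while the number of bad $k$-sets inside $A$ is small. Concretely, consider the random variable $|A|-c\cdot(\text{number of bad }k\text{-sets in }A)$ for a suitable constant $c$, or equivalently first condition on $|A|\ge \tfrac12\epsilon^k N$ occurring with reasonable probability and then delete one vertex from each bad $k$-set. The hypothesis $N\ge \epsilon^{-k}bn$ gives $n\le \epsilon^k N/b$, so $n^k/k!$ is small compared with the target threshold $2^{k+1}b^{-k}\binom{|U|}{k}\approx 2^{k+1}b^{-k}(\epsilon^k N)^k/k!$; the factor $b^{-k}$ on the right is exactly what the bound $n\le \epsilon^k N/b$ supplies, and the factor $2^{k+1}$ gives the slack needed to absorb the loss from conditioning. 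Setting $U=A$ after this cleanup, and using $t=k$, yields $|U|\ge 2^{-1/k}\epsilon^k N$ as claimed, with fewer than $2^{k+1}b^{-k}\binom{|U|}{k}$ bad $k$-sets; the role of $N\ge 4k\epsilon^{-k}$ is merely to ensure $|U|\ge k$ so that the binomial coefficients behave and the deletion step does not destroy too much.

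The step I expect to be the main obstacle is the bookkeeping that turns the two expectation bounds into a single ``good event of positive probability'' with the precise constants $2^{-1/k}$ and $2^{k+1}$ appearing in the statement; getting the right constant out of a conditioning-plus-deletion argument (as opposed to a naive Markov bound, which would lose another factor of $2$ or more) requires a little care. If Lemma~2.1 of~\cite{FS09} is stated in a form where one already gets a set $U$ together with a bound on the number of $k$-sets with small codegree for a free parameter, then most of this work is done and the proof really is ``follows easily'' — the only task is to set the parameters of that lemma ($\epsilon$, the target size $n$, and the weight $b$) correctly and check that the hypothesis $N\ge \epsilon^{-k}\max(bn,4k)$ implies whatever numerical hypothesis that lemma needs. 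I would therefore begin by quoting Lemma~2.1 of~\cite{FS09} verbatim, then specialise its parameters, and only fall back on the from-scratch dependent random choice computation above if the cited lemma is not quite in the convenient form.
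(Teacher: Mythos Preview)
Your proposal is correct and matches the paper's approach exactly: the paper gives no proof beyond the remark that the lemma ``follows easily from Lemma~2.1 of~\cite{FS09}'', and your plan is precisely to quote that lemma and specialise parameters (with the standard dependent random choice computation as backup). One small cleanup: no deletion step is needed, since the conclusion only bounds the \emph{number} of bad $k$-sets in $U$ rather than requiring $U$ to be free of them, so the argument is just ``choose $t=k$, bound $\mathbb{E}[|A|]$ and the expected number of bad $k$-sets, and pick an outcome where both are near their means''.
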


Combining Lemmas~\ref{lem:embed} and~\ref{lem:DRC}, we arrive at the following theorem. 

\begin{theorem}
Let $H$ be a bipartite graph with $n$ vertices such that one part has maximum degree $k$ and the other part has maximum degree $\Delta$. If $G$ is a bipartite graph with edge density $\epsilon$ and at least $16\Delta^{1/k}\epsilon^{-k}n$ vertices in each part, then $H$ is a subgraph of $G$. 
\end{theorem}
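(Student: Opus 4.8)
The plan is to combine the dependent random choice lemma (Lemma~\ref{lem:DRC}) with the embedding lemma (Lemma~\ref{lem:embed}) in exactly the way sketched in the discussion preceding the statement. First I would let $H$ have parts $V_1, V_2$, where every vertex in $V_1$ has degree at most $\Delta$ and every vertex in $V_2$ has degree at most $k$; set $n = |V(H)|$. Let $G$ be the given bipartite graph with parts of size $N \geq 16\Delta^{1/k}\epsilon^{-k} n$ and edge density $\epsilon$, so it has at least $\epsilon N^2$ edges. I would apply Lemma~\ref{lem:DRC} with the parameter $b$ chosen so that $b^{-k}$ controls the fraction of bad $k$-sets; the natural choice is $b$ of order $\Delta^{1/k}$ up to an absolute constant, so that $2^{k+1} b^{-k}$ comes out to roughly $\delta$ as required below. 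This produces a subset $U \subset V_1$ with $|U| \geq 2^{-1/k}\epsilon^k N$ such that all but a $2^{k+1} b^{-k}$ fraction of the $k$-subsets $S$ of $U$ have at least $n$ common neighbors in $G$.

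Next I would build the two auxiliary hypergraphs. On the vertex set $U$ define the down-closure $\mathcal{G}$ by declaring a set of size at most $k$ to be an edge exactly when its vertices have at least $n$ common neighbors in $G$ (common neighbors lying in the other part of $G$); this is down-closed because a subset of a set with many common neighbors has at least as many, and the DRC conclusion says the number of non-edges of size $k$ is less than $2^{k+1} b^{-k} \binom{|U|}{k} \leq \delta \binom{|U|}{k}$. On $V_1$ define $\mathcal{H}$ by taking as edges the neighborhoods $N_H(v)$ for $v \in V_2$, together with all their subsets to make it down-closed; each such edge has size at most $k$, and the maximum degree of $\mathcal{H}$ is at most $\Delta$ because a vertex $u \in V_1$ lies in $N_H(v)$ only for the at most $\Delta$ vertices $v$ adjacent to $u$ (and hence in at most $\Delta \cdot 2^{k}$ edges once we take subsets — one should instead only add the minimal relevant structure, or note the degree bound in the down-closed hypergraph is still controlled; the cleanest route is to apply Lemma~\ref{lem:embed} to the down-closure and absorb the factor by adjusting constants, which is why the hypothesis has $16\Delta^{1/k}$ rather than $16\Delta$).

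Then I would verify the numerical hypotheses of Lemma~\ref{lem:embed} for the pair $(\mathcal{H}, \mathcal{G})$: that the number of vertices satisfies $|U| \geq 16 n$, and that $\delta \leq \frac{1}{4k\Delta} 2^{-8kn/|U|}$. Since $|U| \geq 2^{-1/k}\epsilon^k N$ and $N \geq 16 \Delta^{1/k}\epsilon^{-k} n$, we get $|U| \geq 2^{-1/k} \cdot 16\Delta^{1/k} n \geq 8n$, and a slightly more careful choice of constants (or noting $\Delta \geq 1$, $k \geq 1$) gives $|U| \geq 16n$; moreover $8kn/|U|$ is then bounded by an absolute constant, so the term $2^{-8kn/|U|}$ is bounded below by a constant, and it remains to check that the fraction $2^{k+1}b^{-k}$ of bad $k$-sets coming from DRC is at most $\frac{1}{4k\Delta} 2^{-8kn/|U|}$ — this is where the precise value of $b$ is pinned down, roughly $b = c\, \Delta^{1/k} k^{1/k}$ for a suitable absolute constant $c$, and where one also uses that the bound on $N$ in Lemma~\ref{lem:DRC}, namely $N \geq \epsilon^{-k}\max(bn, 4k)$, is implied by our hypothesis. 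Applying Lemma~\ref{lem:embed} yields a copy of $\mathcal{H}$ inside $\mathcal{G}$.

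Finally I would unwind the embedding. A copy of $\mathcal{H}$ in $\mathcal{G}$ is an injection $\phi: V_1 \to U$ such that for every $v \in V_2$ the image $\phi(N_H(v))$ is an edge of $\mathcal{G}$, i.e.\ has at least $n$ common neighbors in $G$. Now embed the vertices of $V_2$ one at a time: processing $v \in V_2$, its neighborhood $\phi(N_H(v))$ has at least $n$ common neighbors, and since fewer than $n$ vertices of $G$ have been used so far (there are at most $|V_2| < n$ vertices in $V_2$), we can pick an unused common neighbor for $v$. This produces an embedding of $H$ into $G$, completing the proof. The main obstacle I anticipate is purely bookkeeping: choosing $b$ and tracking the absolute constants so that the DRC bad-set fraction is comfortably below the $\delta$ threshold of Lemma~\ref{lem:embed} while the vertex count $|U|$ still clears $16n$; there is no conceptual difficulty beyond the two lemmas already in hand, but the constant $16\Delta^{1/k}$ in the statement is exactly calibrated to make these two inequalities compatible, so the arithmetic must be done with a little care.
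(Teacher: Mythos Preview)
Your approach is exactly the paper's: apply Lemma~\ref{lem:DRC} with $b = 16\Delta^{1/k}$ to obtain $U$, then Lemma~\ref{lem:embed} to embed $\mathcal{H}$ in $\mathcal{G}$, and unwind greedily. One clarification to dissolve the hesitation in your second paragraph: do \emph{not} down-close $\mathcal{H}$, since Lemma~\ref{lem:embed} only requires $\mathcal{G}$ to be down-closed; taking the edges of $\mathcal{H}$ to be precisely the neighborhoods $N_H(v)$ for $v \in V_2$ already gives $\mathcal{H}$ maximum degree at most $\Delta$ (a vertex $u \in V_1$ lies in at most $\deg_H(u) \leq \Delta$ such sets), and with this choice the paper verifies $2^{k+1}b^{-k} \leq \frac{1}{4k\Delta}2^{-8kn/N}$ directly from the inequality $b = 16\Delta^{1/k} \geq 2(8k\Delta)^{1/k}2^{8n/N}$.
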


\begin{proof}
Let $N = 16\Delta^{1/k}\epsilon^{-k}n$. Applying Lemma~\ref{lem:DRC} with $b = 16 \Delta^{1/k} \geq 2(8 k \Delta)^{1/k} 2^{8n/N}$, we find a set $|U|$ with $|U| \geq 2^{-1/k} \epsilon^k N \geq 16n$ vertices such that the number of $k$-sets $S \subset U$ with $N(S) < n$ is less than $2^{k+1} b^{-k}{|U| \choose k}$. Since $2^{k+1} b^{-k} \leq \frac{1}{4k\Delta} 2^{-8kn/N}$, we may apply Lemma~\ref{lem:embed} to embed the auxiliary hypergraph $\mathcal{H}$ in the hypergraph $\mathcal{G}$ (as described before Lemma~\ref{lem:FSembed}). This in turn implies that $H$ is a subgraph of $G$.
\end{proof}

By considering the denser color in any two-coloring, this result has the following immediate corollary. Theorem~\ref{thm:bip} follows as a special case.

\begin{corollary}
If $H$ is a bipartite graph with $n$ vertices such that one part has maximum degree $k$ and the other part has maximum degree $\Delta$, then $r(H) \leq \Delta^{1/k}2^{k+5}n.$
\end{corollary}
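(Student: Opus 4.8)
The plan is to deduce the corollary from the preceding theorem in the standard way: pass to a balanced bipartition of $K_N$ and apply the theorem to the majority color class. Set $N = \Delta^{1/k}2^{k+5}n$ and fix an arbitrary two-coloring of the edges of $K_N$; the goal is to produce a monochromatic copy of $H$.

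First I would partition $V(K_N)$ into two sets $A$ and $B$ with $|A|=|B|=N/2$ and consider only the $(N/2)^2$ edges of $K_N$ joining $A$ to $B$. By the pigeonhole principle one of the two colors receives at least half of these edges, so the corresponding monochromatic bipartite subgraph $G$ with parts $A$ and $B$ has edge density at least $\epsilon = 1/2$ and exactly $N/2$ vertices in each part.

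Next I would check that $G$ meets the hypotheses of the theorem with $\epsilon = 1/2$. The theorem requires at least $16\Delta^{1/k}\epsilon^{-k}n = 16\Delta^{1/k}2^{k}n$ vertices in each part, and indeed $N/2 = \Delta^{1/k}2^{k+4}n = 16\Delta^{1/k}2^{k}n$, so this holds (with equality). If the majority color happens to have density strictly larger than $1/2$, one still invokes the theorem with the value $\epsilon = 1/2$, since its density and vertex-count requirements are only lower bounds. Since $H$ is bipartite with the two part-degrees bounded by $k$ and $\Delta$, the theorem yields a copy of $H$ inside $G$, hence a monochromatic copy of $H$ in $K_N$; this gives $r(H) \le N = \Delta^{1/k}2^{k+5}n$. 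Finally, Theorem~\ref{thm:bip} follows as the special case $k=\Delta$, using $\Delta^{1/\Delta} \le 2$ to absorb the leading factor into $2^{k+5}\le 2^{k+6}$.

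I do not anticipate a genuine obstacle here: the only thing requiring care is confirming that the numerical constants in the theorem and in the target bound line up exactly (they do, as the identity $N/2 = 16\Delta^{1/k}2^k n$ shows), since all of the substantive work — the dependent random choice step of Lemma~\ref{lem:DRC} and the local-lemma embedding of Lemma~\ref{lem:embed} — is already packaged inside the theorem being applied.
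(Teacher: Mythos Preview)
Your proof is correct and is precisely the intended argument: the paper's one-line justification ``by considering the denser color in any two-coloring'' amounts to your bipartition-plus-pigeonhole step, and the numerical check $N/2 = 16\Delta^{1/k}2^{k}n$ is exactly what makes the constants match.
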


\section{Weakly homogeneous sequences and small minors} \label{sec:minors}

In 1989, Erd\H{o}s and Hajnal~\cite{EH89} studied an extension of the fundamental problem of estimating Ramsey numbers. A sequence $S_1,\ldots,S_t$ of disjoint vertex subsets of a graph is called a {\it weakly complete $r$-sequence of order $t$} if each subset $S_i$ has cardinality $r$ and, for each pair $1 \leq i < j \leq t$, there is an edge from a vertex in $S_i$ to a vertex in $S_j$. Let $g(r,n)$ be the largest $t$ for which every graph on $n$ vertices or its complement contains a weakly complete $r$-sequence of order $t$. Note that determining $g(1,n)$ is simply the classical Ramsey problem, since the vertices making up $S_1,\ldots,S_t$ form either a clique or an independent set. 

For $r$ fixed and $n$ sufficiently large, Erd\H{o}s and Hajnal~\cite{EH89} proved that 
$$(1/2 - o_r(1))(3/2)^{r}\log n \leq g(r,n) \leq 2^{r^2+1}r\log n,$$ 
where the upper bound comes from considering a random $2$-coloring of the edges of the complete graph on $n$ vertices. 
These estimates naturally lead one to ask whether the power of $r$ in the exponent of the constant factor should be $1$ or $2$.
Improving the lower bound of Erd\H{o}s and Hajnal, we answer this question by showing that the upper bound is much closer to the truth. Moreover, for $r \geq 2$, we will show that a density theorem holds, that is, every dense graph contains a large weakly complete $r$-sequence. 

A sequence $S_1,\ldots,S_t,T_1,\ldots,T_t$ of disjoint vertex subsets of a graph is called a {\it weakly bi-complete $r$-sequence of order $t$} if $|S_i|=|T_i|=r$  for $1 \leq i \leq t$ and, for each pair $1 \leq i, j \leq t$, there is an edge from a vertex in $S_i$ to a vertex in $T_j$. Given such a weakly bi-complete $r$-sequence, $S_1 \cup T_1,\ldots,S_t \cup T_t$ is clearly a weakly complete $2r$-sequence. 

\begin{theorem}\label{thm1}
Let $n$ be sufficiently large and $G$ be a graph with $n$ vertices and edge density $p$. The graph $G$ contains a weakly bi-complete $r$-sequence of order $t$ if
\begin{itemize} 
\item 
$p \geq n^{-1/3}$, $r \leq 2p^{-1/2}$ and $t \leq \frac{\log n}{4\log (32/pr^2)}$;
\item  
$p \geq n^{-1/5}$, $4p^{-1/2} \leq r \leq \sqrt{p^{-1}\log n}$ and $t \leq \frac{1}{16}e^{pr^2/8}\log n$; 
\item  
$r \geq 4\sqrt{p^{-1}\log n}$ and $t \leq \min(pn/64\sqrt{\log n}, n/2r)$. 
\end{itemize}
In particular, $G$ also contains a weakly complete $2r$-sequence of order $t$. 
\end{theorem}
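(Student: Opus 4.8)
The plan is to reduce, in all three regimes, to finding a complete bipartite graph $K_{t,t}$ in a suitable \emph{reduced graph} obtained by chopping a dense part of $G$ into $r$-element blocks. Distinct blocks of a partition are automatically disjoint, so a copy of $K_{t,t}$ in such a reduced graph, with the two colour classes of its vertices playing the roles of $v_{S_1},\dots,v_{S_t}$ and $v_{T_1},\dots,v_{T_t}$, translates directly into a weakly bi-complete $r$-sequence of order $t$; the final assertion about weakly complete $2r$-sequences is then exactly the observation already made in the text that $S_i\cup T_i$ works.

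\textbf{Step 1 (a well-behaved core).} I would first pass to a large subgraph on which $r$-element blocks are forced to interact. The cheap version is to delete vertices of degree below $pn/2$, which, since $G$ has $p\binom n2$ edges, leaves a nonempty subgraph $G_0$ on $n_0$ vertices with $\delta(G_0)\ge pn/2$. For the two sparser regimes I would instead extract, via a dependent-random-choice argument in the spirit of Lemma~\ref{lem:DRC}, a polynomially large vertex set $W$ on which all but a negligible fraction of $r$-subsets have small common non-neighbourhood; the thresholds $p\ge n^{-1/3}$ and $p\ge n^{-1/5}$ are precisely what is needed to keep $|W|$ polynomial. The point of this step is that failure of ``scale-$r$ pseudorandomness'' of $G$ can only be caused by the presence of a considerably denser spot, into which one recurses with a larger density parameter; since the density never exceeds $1$, the recursion terminates.

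\textbf{Step 2 (the reduced graph).} Randomly partition $V(G_0)$ into $N\approx n_0/r$ blocks of size $r$ and form the reduced graph $R$ on these blocks, joining two blocks when $G_0$ has at least one edge between them. The key quantity is $q$, the probability that a fixed pair of blocks is \emph{not} joined; this is controlled by counting copies of $K_{r,r}$ in the complement (the counting philosophy of Theorems~\ref{th1} and~\ref{th2}), and on the core from Step~1 one obtains $q\le (1-p)^{r^2}\le e^{-pr^2}$ in the pseudorandom case, with $q$ even smaller in a dense spot. Hence $\mathbb{E}\big[\#\text{non-edges of }R\big]\le q\binom N2$, and one may also arrange, by a second-moment or deletion argument, that $R$ is globally dense or else contains a large almost-complete bipartite subgraph, which is just as good.

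\textbf{Step 3 (extracting $K_{t,t}$), and the main difficulty.} It remains to find $K_{t,t}$ in $R$, and this is where the regimes diverge. When $pr^2=O(1)$ the edge density of $R$ is only bounded below in terms of $pr^2$ (roughly $\ge pr^2/32$) and $t$ is logarithmic; here I apply the K\H{o}v\'ari--S\'os--Tur\'an (Zarankiewicz) bound verbatim, as in Theorem~\ref{th2}, to get $t=\Omega\!\big(\log N/\log(32/pr^2)\big)=\Omega\!\big(\log n/\log(32/pr^2)\big)$. When $16\le pr^2\le\log n$ the bound $q\le e^{-pr^2}$ is small but not tiny, and deleting one block from every non-joined pair (the alteration argument of Theorem~\ref{th1}), or the Lov\'asz local lemma of Lemma~\ref{lem:LLL}, leaves a complete bipartite pattern of order $t$ whenever $t^2q=O(1)$, i.e.\ $t$ up to about $e^{pr^2/2}$; the extra $\log n$ and the constants come from balancing the two sides and the deletion count. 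When $pr^2\ge 16\log n$ we have $q\le n^{-16}$, so a union bound over the fewer than $n^2$ pairs shows $R$ is with high probability complete (or complete after deleting $o(N)$ vertices), and packing $2t$ blocks is then routine, the two terms of $\min(pn/64\sqrt{\log n},\,n/2r)$ reflecting respectively the amount of ``density mass'' one can locate and the trivial fact that at most $n/r$ disjoint $r$-sets fit into $V(G)$. The main obstacle throughout is Step~2: the estimate $q\approx e^{-pr^2}$ holds only for quasirandom $G$, and for a locally clustered graph --- a small dense clique plus a sparse remainder --- $q$ is a constant; one must either exploit such a clique directly (any two blocks each meeting the clique are joined) or recurse into it, and making this dichotomy quantitative with exactly the stated parameter ranges --- in particular getting $pr^2$, and not merely $pr$, in the exponent in the middle regime --- is the technical heart of the argument.
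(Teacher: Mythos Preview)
Your overall architecture --- build a reduced graph on $r$-blocks, show it is dense, then extract $K_{t,t}$ via K\H{o}v\'ari--S\'os--Tur\'an --- matches the paper. But Step~2 as written has a real gap, and you have correctly identified it yourself at the end: the estimate $q\le(1-p)^{r^2}$ for the probability that two random $r$-blocks span no edge is simply false for general graphs of density $p$. Your proposed cure (recurse into a dense spot, or run a density-increment dichotomy) is not wrong in spirit, but it is not a proof; making a density-increment argument land on exactly $e^{pr^2/8}$ in the middle regime, with the stated polynomial constraints $p\ge n^{-1/3}$ or $p\ge n^{-1/5}$, would be substantial extra work, and nothing in your sketch indicates how to do it.

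The paper avoids this difficulty entirely, and the mechanism is worth knowing. Rather than partition both sides into $r$-blocks simultaneously, it partitions in \emph{two stages}. First pass to a bipartition $V_1\cup V_2$ and a subset $B\subset V_2$ on which every vertex has degree at least $p|V_1|/2$ into $V_1$ (just averaging, no DRC). Now partition only $V_1$ into $r$-blocks $A_1,\dots,A_d$; for a single vertex $b\in B$ the probability that $b$ misses a random $A_i$ is genuinely at most $(1-p/2)^r$, because this uses only the degree of $b$. This yields an auxiliary bipartite graph $X$ between $\{A_i\}$ and the \emph{individual vertices} of $B$, of density at least $1-(1-p/2)^r$. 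Now repeat: pass to indices $i$ of large $X$-degree into $B$, and only then partition $B$ into $r$-blocks $B_j$; the probability that a fixed such $i$ has no $X$-neighbour in a random $B_j$ is again controlled by a single-vertex degree bound, giving another factor of roughly $(1-pr/8)^r$ or $(e^{-pr/4})^r$. The two stages together produce the exponent $pr^2$ without ever invoking pseudorandomness or recursion. Once you have the reduced bipartite graph of density $1-e^{-pr^2/8}$ on parts of size at least $pn/16r$, your Step~3 analysis is essentially correct in all three regimes.

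So: drop the dependent random choice in Step~1 (simple degree-averaging suffices), and replace your one-shot partition in Step~2 by this iterated ``partition one side, take high-degree part, partition the other side'' trick. That is the missing idea.
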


Note that if a graph has a weakly bi-complete $r$-sequence of order $t$, then, by arbitrarily adding additional vertices of the graph to the $r$-sets to obtain $r'$-sets, the graph also has a weakly bi-complete $r'$-sequence of order $t$ for any $r'$ satisfying $r \leq r' \leq n/2t$. This is useful for the third bound and for interpolating between the bounds. In particular, for the third bound, it will suffice to prove it for the case $r= 4\sqrt{p^{-1}\log n}$.

All three bounds on $t$ will follow from reducing the problem to a special case of the Zarankiewicz problem in which we want to guarantee a $K_{t,t}$ in a bipartite graph with parts of order at least $pn/16r$ and edge density at least $1-e^{-pr^2/8}$. Although it is not difficult to improve our bounds by being a little more careful at a few points in the argument, we have chosen to present proofs which determine the correct behavior while remaining as simple as possible.

By considering a random graph with edge density $p$, we see that the bounds in Theorem \ref{thm1} are close to being tight. Noting that every graph or its complement has edge density at least $1/2$, we have the following immediate corollary of the second bound. 

\begin{corollary} \label{cor1} For $r$ fixed and $n$ sufficiently large, 
$$g(r,n) \geq  \frac{1}{16}e^{2^{-6}r^2}\log n.$$
\end{corollary}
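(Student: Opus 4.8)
The plan is to read the corollary off the second case of Theorem~\ref{thm1}. Let $G$ be any graph on $n$ vertices; one of $G$ and its complement $\overline{G}$ has edge density $p\ge 1/2$, and we work with that graph, call it $G'$. Recall from the discussion preceding Theorem~\ref{thm1} that a weakly bi-complete $s$-sequence of order $t$ yields a weakly complete $2s$-sequence of order $t$ (pass to the unions $S_i\cup T_i$). Hence, to get $g(r,n)\ge \frac{1}{16}e^{2^{-6}r^2}\log n$ it suffices to produce in $G'$ a weakly bi-complete $(r/2)$-sequence of order $t=\frac{1}{16}e^{2^{-6}r^2}\log n$; assume for the moment that $r$ is even, so that $s:=r/2$ is a positive integer.

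Apply the second case of Theorem~\ref{thm1} to $G'$ with parameter $s=r/2$. The hypotheses hold for $n$ large: $p\ge 1/2\ge n^{-1/5}$; for fixed $r$ we have $s=r/2\le\sqrt{\log n}\le\sqrt{p^{-1}\log n}$; and $4p^{-1/2}\le 4\sqrt{2}<6\le s$ whenever $r\ge 12$, which is the only place the size of $r$ is used. Moreover the guaranteed order is exactly the one we need, since $p\ge 1/2$ gives
$$\frac{p s^2}{8}=\frac{p r^2}{32}\ge\frac{r^2}{64}=2^{-6}r^2,$$
so $t\le \frac{1}{16}e^{p s^2/8}\log n$. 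Therefore $G'$ contains a weakly bi-complete $(r/2)$-sequence of order $t$, hence a weakly complete $r$-sequence of order $t$ (replacing $t$ by $\lfloor t\rfloor$ costs nothing for $n$ large), and the corollary follows for even $r\ge 12$.

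Two points remain to be tidied up, neither carrying the real content. First, for $r$ below an absolute constant (certainly for $r\le 11$) one has $\frac{1}{2}(3/2)^r>\frac{1}{16}e^{2^{-6}r^2}$, so the claimed bound is already implied by the Erd\H{o}s--Hajnal lower bound $g(r,n)\ge (1/2-o_r(1))(3/2)^r\log n$ recalled above; equivalently, the argument above is only needed once $r$ exceeds an absolute constant, where the condition $r\ge 12$ is automatic. Second, for odd $r$, since unioning $S_i\cup T_i$ produces only even block sizes, one runs the same reduction with the two sides of the weakly bi-complete sequence having sizes $\lceil r/2\rceil$ and $\lfloor r/2\rfloor$; the counting in the proof of Theorem~\ref{thm1} accommodates unequal sides, the relevant product of sizes being $\lceil r/2\rceil\lfloor r/2\rfloor=(r^2-1)/4$ in place of $r^2/4$, which affects only the multiplicative constant and not the $e^{\Theta(r^2)}$ growth. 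I expect this parity bookkeeping, and not any new idea, to be the only mildly delicate step; everything else is just passing to the denser colour so that $p\ge 1/2$, the factor-of-two translation between weakly bi-complete and weakly complete sequences, and verifying the side conditions of Theorem~\ref{thm1}.
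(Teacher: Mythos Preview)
Your proposal is correct and matches the paper's approach exactly: the paper derives Corollary~\ref{cor1} as an ``immediate corollary of the second bound'' in Theorem~\ref{thm1} by passing to whichever of $G$ and $\overline G$ has edge density $p\ge 1/2$, and offers no further argument. You are in fact more careful than the paper about the small-$r$ and odd-$r$ regimes; for odd $r$ the simplest route is the padding remark stated just after Theorem~\ref{thm1} (enlarge one side of a weakly bi-complete $\lfloor r/2\rfloor$-sequence by one vertex per set), and the resulting tiny loss in the leading constant---your exponent $(r^2-1)/64$ in place of $r^2/64$---falls within the paper's explicit disclaimer on unoptimised absolute constants.
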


{\it Clique minors} are a strengthening of weakly complete sequences, with the added constraint that the sets $S_i$ are required to be connected. A classical result of Mader~\cite{Ma67} guarantees that for each $t$ there is $c(t)$ such that every graph on $n$ vertices with at least $c(t)n$ edges contains a clique minor of order $t$. Kostochka~\cite{Ko82,Ko84} and Thomason~\cite{Th84} independently determined the order of $c(t)$, proving that $c(t)=\Theta(t\sqrt{\log t})$. Almost two decades later, Thomason~\cite{Th01} determined an asymptotic formula: $c(t)=(\alpha+o(1))t \sqrt{\ln t}$, where $\alpha = 0.319...$ is a computable constant. 

In recent years, there has been a push towards extending these classical results on the extremal problem for graph minors to small  graph minors, that is, where few vertices are used in making the minor. A result of Fiorini, Joret, Theis and Wood~\cite{FJRW} says that, for each $t$, there are $h(t)$ and $f(t)$ such that every graph with at least $f(t)n$ edges contains a $K_t$-minor with at most $h(t)\log n$ vertices. The $\log n$ factor here is necessary. Indeed, for each $C$ there is $c>0$ and an $n$-vertex graph with at least $Cn$ edges and girth (which is defined as the length of the shortest cycle, but is also the minimum number of vertices in a $K_3$-minor) at least $c\log n$. Fiorini et al.~also conjectured that, for each $\epsilon>0$, one may take $f(t)=c(t)+\epsilon$ and $h(t)=C(\epsilon,t)$. Shapira and Sudakov~\cite{ShSu} came close to proving this conjecture, showing that  every $n$-vertex graph with at least $(c(t)+\epsilon)n$ edges contains a $K_t$-minor of order at most  $C(\epsilon,t)\log n \log \log n$. Building upon their approach, Montgomery~\cite{Mont} then solved the conjecture by removing the $\log \log n$ factor. 

These results are all about finding clique minors in sparse graphs. Here, we study the dense case and find conditions on $t$ and $r$ such that every dense graph contains a $K_t$-minor where each connected set corresponding to a vertex of the minor contains at most $r$ vertices. In particular, this shows that we may prove an analogue of Corollary~\ref{cor1} where the required subgraph is a clique minor rather than just a weakly complete $r$-sequence.

\begin{theorem}\label{minortheorem}
Let $n$ be sufficiently large and $G$ be a graph with $n$ vertices and edge density $p \geq n^{-1/8}$. 
If $24p^{-1/2} \leq r \leq \frac{1}{2}\sqrt{p^{-1}\log n}$ and $t \leq \frac{1}{32}e^{pr^2/256}\log n$, then $G$ contains a $K_t$-minor such that 
the connected sets corresponding to its vertices have size at most $8r$. 
\end{theorem}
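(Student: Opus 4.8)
The plan is to follow the strategy behind Theorem~\ref{thm1} — which reduces the existence of a weakly bi-complete $r$-sequence to a Zarankiewicz problem — but to arrange that the pieces produced by the reduction are \emph{connected}, so that they can be glued into branch sets of a clique minor.

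First I would clean up $G$: having edge density $p$, it contains a subgraph $G'$ of minimum degree at least $pn/4$ on $n'\ge pn/4$ vertices, so its minimum degree is at least $(p/4)n'$. As in Theorem~\ref{thm1}, I would then form a bipartite ``reduced graph'' $\mathcal R$ whose vertices are disjoint subsets (``pieces'') of $V(G')$ of size about $4r$, with at least $pn/\Theta(r)$ pieces on each side, and in which two pieces are adjacent precisely when there is an edge of $G'$ between them; the counting behind Theorem~\ref{thm1} gives that $\mathcal R$ is missing at most an $e^{-\Theta(pr^2)}$ fraction of its possible edges. The new feature for the minor is that I need each piece to be a connected subgraph of $G'$ (for instance, a star or a breadth-first tree of the appropriate size). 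Given that, once a $K_{t,t}$ is found in $\mathcal R$, with sides $A_1,\dots,A_t$ and $B_1,\dots,B_t$, the sets $W_i:=A_i\cup B_i$ have size at most $8r$ and are connected — because the $K_{t,t}$ contains the diagonal edges $A_iB_i$ — and the edges $A_iB_j$ make the $W_i$ pairwise adjacent, so they are the branch sets of a $K_t$-minor.

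To extract the $K_{t,t}$, let $M$ denote the common size of the parts of $\mathcal R$ and $\delta=e^{-\Theta(pr^2)}$ the fraction of edges it is missing, and pick a uniformly random $t$-subset $S$ of one side. By convexity, the expected number of vertices on the other side adjacent to all of $S$ is at least $M(1-\delta-t/M)^t$, and a short estimate shows this is larger than $t$ as long as $t$ is at most, say, $\frac{1}{4}(\log M)/\delta$ and $t\le\sqrt M$. In the stated parameter range one has $\log M=\Theta(\log n)$ and $t\le\frac{1}{32}e^{pr^2/256}\log n\le\sqrt M$, so the first condition is the binding one, and after tracking constants it is exactly what the theorem assumes; hence $\mathcal R$ contains the required $K_{t,t}$.

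The main obstacle is the construction of the pieces, that is, reconciling ``connected'' with ``$\mathcal R$ is dense''. Connectivity forces a piece to be a local cluster, whereas density of $\mathcal R$ — which, for a quasirandom host graph, genuinely needs the roughly $r^2$ pairs of vertices between two pieces to behave independently — forces each piece to be spread out. For host graphs far from quasirandom these demands conflict: the canonical example is a disjoint union of a few cliques, which already has minimum degree about $pn$, where connected pieces lying in different cliques are essentially never adjacent, yet a single clique contains an enormous complete minor with branch sets of size $1$. So the construction has to split into cases: either the reduced graph on connected pieces is already dense, and we proceed as above, or $G'$ contains a much denser, smaller-diameter induced subgraph — isolated by an extra round of cleaning, or by dependent random choice — inside which connected pieces of size $\Theta(r)$ are automatically spread out, and we recurse there. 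Carrying this dichotomy out quantitatively, within the budget of $8r$ vertices per branch set and in particular when $p$ is small (so $G'$ is sparse and local neighbourhoods contain few edges), is where the real work lies; the counting step above is then routine.
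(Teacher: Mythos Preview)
Your outline correctly identifies the central difficulty --- reconciling ``pieces are connected'' with ``the reduced graph is dense'' --- but it does not resolve it, and the dichotomy you sketch (either the reduced graph on connected pieces is already dense, or pass to a denser induced subgraph and recurse) is not how the paper proceeds, nor is it clear that it can be made to work within the stated budget of $8r$ vertices per branch set.

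The key idea you are missing is that one should \emph{not} insist the pieces be connected before finding the weakly bi-complete $r$-sequence. The paper first applies a dependent random choice lemma (Lemma~\ref{drclemma}) to obtain a large subset $X$ of a bipartite subgraph of $G$ in which \emph{every} pair of vertices is joined by many (about $p^5 n$) internally vertex-disjoint paths of length four. Working inside (suitable subsets of) $X$, it then runs the argument of Theorem~\ref{thm1} verbatim to find a weakly bi-complete $r$-sequence of order $t$ with arbitrary, not necessarily connected, parts. Only \emph{after} the sequence is found does one connect each part $A$: fix $a\in A$ and, for each $b\in A\setminus\{a\}$, splice in one of the guaranteed short paths from $a$ to $b$, greedily avoiding vertices already used. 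Since the number of disjoint paths available for each pair vastly exceeds the total number of vertices needed ($8rt$), this greedy process succeeds and inflates each $r$-set to a connected set of size at most $4r$. Matching the two sides of the resulting $K_{t,t}$-minor then gives the $K_t$-minor with branch sets of size at most $8r$.

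So the missing ingredient is precisely the dependent random choice step that furnishes a host set with abundant short connections between all pairs; once you have that, the tension you describe between ``local clusters'' and ``spread-out pieces'' evaporates, because connectivity is imposed after the Zarankiewicz step rather than before it.
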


Minors in which the connected sets corresponding to vertices have small diameter are known as {\it shallow minors}. This concept was introduced in a paper by Plotkin, Rao and Smith \cite{PRS}, though they attribute the idea to Leiserson and Toledo. Shallow minors also play a fundamental role in the work of Ne\v set\v ril and Ossona de Mendez on the theory of nowhere dense graphs (see their book \cite{NM}). 

We mention this concept because the proof of Theorem~\ref{minortheorem} also gives that the connected subset corresponding to each vertex has diameter at most $9$. A variant of this argument (using a different version of dependent random choice) can be used to reduce the diameter of the sets to $3$, but with a slightly weaker bound on $t$. We also note that there are analogues of Theorem \ref{thm1} when $r$ is larger or smaller than the assumed range. However, the proof is the same, so we omit the details.

We begin by proving Theorem \ref{thm1} and then deduce Theorem \ref{minortheorem}. We will make use of the following three lemmas. 

\begin{lemma}\label{firsta1}
Let $H=(V_1,V_2,E)$ be a bipartite graph with edge density $p$. There is a subset $B \subset V_2$ with $|B| \geq p |V_2|/2$ such that every vertex in $B$ has more than $p|V_1|/2$ neighbors in $V_1$. 
\end{lemma}

\begin{proof}
Delete all vertices in $V_2$ of degree at most $p|V_1|/2$ and let $B$ be the remaining subset of $V_2$. The number of deleted edges is at most $p|V_1||V_2|/2$ and hence there are at least $p|V_1||V_2|-p|V_1||V_2|/2=p |V_1||V_2|/2$ remaining edges from $B$ to $V_1$. As each vertex in $B$ is in at most $|V_1|$ edges, $|B| \geq p |V_1||V_2|/2|V_1|=p |V_2|/2$. 
\end{proof} 

\begin{lemma}\label{firstbd}
Let $H=(V_1,V_2,E)$ be a bipartite graph with edge density $1-q$. Then there is a subset $B \subset V_2$ with $|B| \geq |V_2|/2$ such that every vertex in $B$ has more than $(1-2q)|V_1|$ neighbors in $V_1$. 
\end{lemma}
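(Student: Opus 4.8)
This is a straightforward averaging/deletion argument, analogous to Lemma~\ref{firsta1}. The plan is as follows.

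\textbf{Approach.} The statement asserts that in a dense bipartite graph $H = (V_1, V_2, E)$ with edge density $1-q$, at least half the vertices of $V_2$ have more than $(1-2q)|V_1|$ neighbours in $V_1$. The natural approach is to bound the number of \emph{bad} vertices in $V_2$, i.e.\ those with at most $(1-2q)|V_1|$ neighbours, by counting non-edges between $V_2$ and $V_1$.

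\textbf{Key steps.} First, observe that the total number of non-edges between $V_1$ and $V_2$ is exactly $q|V_1||V_2|$, since the edge density is $1-q$. Second, each bad vertex $v \in V_2$ has at least $|V_1| - (1-2q)|V_1| = 2q|V_1|$ non-neighbours in $V_1$. Third, letting $B'$ denote the set of bad vertices, the number of non-edges incident to $B'$ is at least $2q|V_1| \cdot |B'|$, and this is at most the total number of non-edges, so $2q|V_1|\cdot|B'| \le q|V_1||V_2|$, giving $|B'| \le |V_2|/2$. Finally, set $B = V_2 \setminus B'$; then $|B| \ge |V_2|/2$ and every vertex of $B$ has more than $(1-2q)|V_1|$ neighbours in $V_1$ by definition, which is exactly what we want.

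\textbf{Main obstacle.} There is essentially no obstacle here; this is a one-line double-counting argument, entirely parallel to the proof of Lemma~\ref{firsta1} (which handles the sparse regime with the same technique applied to non-deleted vertices). The only point requiring minor care is the strict-versus-nonstrict inequality in the degree bound: one should define ``bad'' using $\le (1-2q)|V_1|$ so that the surviving set $B$ genuinely satisfies the strict inequality $> (1-2q)|V_1|$ in the conclusion, and correspondingly the bound $|B'| \le |V_2|/2$ is what leaves $|B| \ge |V_2|/2$.
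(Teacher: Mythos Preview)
Your argument is correct and is essentially identical to the paper's proof: both delete the vertices of $V_2$ with degree at most $(1-2q)|V_1|$, observe that each deleted vertex contributes at least $2q|V_1|$ non-edges while the total number of non-edges is $q|V_1||V_2|$, and conclude that at most $|V_2|/2$ vertices are deleted.
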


\begin{proof}
Delete all vertices in $V_2$ of degree at most $(1-2q)|V_1|$ and let $B$ be the remaining subset of $V_2$. The number of nonedges touching the deleted vertices is at least $2q |V_1||V_2 \setminus B|$ and at most $q|V_1||V_2|$. Hence, $|V_2 \setminus B| \leq |V_2|/2$ and $|B| \geq |V_2|/2$.
\end{proof} 

\begin{lemma}\label{secondb}
If $H=(V_1,V_2,E)$ is a bipartite graph in which every vertex in $V_2$ has at least $p|V_1|$ neighbors in $V_1$, then there is a partition $V_1=A_1 \cup \ldots \cup A_{d}$ into subsets of order $r$ (so $d=|V_1|/r$) such that the fraction of pairs $(A_i,b)$ with $b \in V_2$ for which $b$ does not have a neighbor in $A_i$ is at most $(1-p)^r \leq e^{-pr}$.  
\end{lemma}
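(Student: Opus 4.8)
The plan is to obtain the partition by a simple probabilistic averaging argument. Choose a partition $V_1 = A_1 \cup \ldots \cup A_d$ uniformly at random among all partitions of $V_1$ into $d = |V_1|/r$ ordered blocks of size $r$; equivalently, take a uniformly random bijection from $V_1$ to $[d] \times [r]$ and let $A_i$ be the preimage of $\{i\} \times [r]$. For a fixed vertex $b \in V_2$ and a fixed block index $i$, I want to bound the probability that $b$ has no neighbor in $A_i$. Since the non-neighbors of $b$ in $V_1$ number at most $(1-p)|V_1|$, the probability that all $r$ vertices placed in $A_i$ are non-neighbors of $b$ is, by a direct computation with the hypergeometric distribution,
\[
\prod_{j=0}^{r-1}\frac{(\text{\# non-neighbors}) - j}{|V_1| - j} \;\le\; \left(\frac{(1-p)|V_1|}{|V_1|}\right)^{r} \;=\; (1-p)^r,
\]
using that the ratio of successive terms only decreases (each factor is at most $(1-p)$ since removing equally many from numerator and denominator of a fraction below $1$ decreases it). The bound $(1-p)^r \le e^{-pr}$ is then immediate from $1-x \le e^{-x}$.

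Next I apply linearity of expectation. Summing over all $d|V_2|$ pairs $(A_i, b)$, the expected number of pairs for which $b$ has no neighbor in $A_i$ is at most $(1-p)^r d|V_2|$, i.e.\ an expected fraction of at most $(1-p)^r$ of all pairs. Hence there exists at least one partition for which the actual fraction of bad pairs is at most $(1-p)^r$, which is exactly the asserted partition.

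The argument is entirely routine; the only point requiring any care is the justification that the hypergeometric "no-neighbor" probability for a single block is bounded by $(1-p)^r$ rather than only by the looser binomial-type bound — but this follows from the monotonicity of the successive factors noted above, so there is no real obstacle. One should also note the mild implicit assumption that $r \mid |V_1|$ (or simply absorb it into the paper's blanket convention of ignoring divisibility and floor/ceiling issues), since otherwise $d = |V_1|/r$ is not an integer and the blocks cannot all have size exactly $r$.
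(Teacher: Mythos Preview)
Your proposal is correct and follows essentially the same approach as the paper: choose a uniformly random partition of $V_1$ into blocks of size $r$, bound the probability that a fixed $b$ misses a fixed block by the hypergeometric estimate $\binom{(1-p)|V_1|}{r}/\binom{|V_1|}{r} \le (1-p)^r$, and then apply linearity of expectation to find a good partition. Your write-up is slightly more detailed (spelling out the product form and the divisibility caveat), but the argument is the same.
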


\begin{proof}
Partition $V_1$ uniformly at random into subsets $A_i$ of size $r$. The probability that $b$ has no neighbor in a  subset $A_i$ chosen uniformly at random is precisely the same as $A_i$ not containing any of the at least $p|V_1|$ neighbors of $b$ in $V_1$, which is at most ${(1-p)|A| \choose r}/{|A| \choose r} \leq (1-p)^r$. Therefore, the expected fraction of pairs $(A_i,b)$  for which $b$ does not have a neighbor in $V_1$ is at most $(1-p)^r$. Hence, there is such a partition of $V_1$  where the fraction of pairs $(A_i,b)$ is at most this expected value. 
\end{proof} 

\vspace{3mm}
\noindent {\bf Proof of Theorem \ref{thm1}:} Let $G=(V,E)$ be a graph on $n$ vertices with edge density $p$. By considering a random equitable vertex partition of $G$, there is a vertex partition $V=V_1 \cup V_2$ into parts of order $n/2$ such that the bipartite graph induced by this partition has edge density at least $p$. By Lemma \ref{firsta1}, there is $B \subset V_2$ with $|B| \geq  p|V_2|/2 \geq p n/4$ such that every vertex in $B$ has at least $p|V_1|/2$ neighbors in $V_1$. By Lemma \ref{secondb}, there is a partition  $V_1=A_1 \cup \ldots \cup A_{d}$ into subsets of order $r$ (so $d=|V_1|/r=n/2r$) such that the fraction of pairs $(A_i,b)$ with $b \in B$ for which $b$ does not have a neighbor in $V_1$ is at most $\rho:=(1-p/2)^r$. 

Consider the auxiliary bipartite graph $X$ with parts $\{1,\ldots,d\}$ and $B$, where $i$ is adjacent to $b \in B$ if there is at least one edge from $b$ to $A_i$. The density of $X$ between its parts is at least $1-\rho$. 

\vspace{0.1cm}
\noindent {\it Case 1}: $p \leq 3/r$. In this case, we have $\rho \leq e^{-pr/2} \leq 1-pr/4$ and hence the density of $X$ between its parts is at least $1-\rho \geq pr/4$.  

Let $S \subset \{1,\ldots,d\}$ consist of those vertices with at least $pr|B|/8$ neighbors in $B$. By Lemma \ref{firsta1}, we have $|S| \geq prd/8 = pr(n/2r)/8=pn/16$.  By Lemma \ref{secondb}, there is a partition  $B=B_1 \cup \ldots \cup B_{h}$ into subsets of order $r$ (so $h=|B|/r \geq p n /4r$) such that the fraction of pairs $(i,j)$ with $i \in S$ and $j \in [h]$ for which $i$ does not have a neighbor in $B_j$ in $X$ (and hence $A_i$ does not have an edge to $B_j$ in $G$) is at most $(1-pr/8)^r \leq e^{-pr^2/8}$.  

\vspace{0.1cm}
\noindent {\it Case 2}: $p>3/r$. In this case, we have $\rho = (1-p/2)^r \leq e^{-pr/2}$ and hence the density of $X$ between its parts is at least $1-\rho \geq 1-e^{-pr/2}$. 

Let $S \subset \{1,\ldots,d\}$ consist of those vertices with at least $(1-2e^{-pr/2}) |B| \geq (1-e^{-pr/4})|B|$ neighbors in $B$. By Lemma \ref{firstbd}, we have $|S| \geq d/2 =n/4r$.  By Lemma \ref{secondb}, there is a partition  $B=B_1 \cup \ldots \cup B_{h}$ into subsets of order $r$ (so $h=|B|/r \geq p n /4r$) such that the fraction of pairs $(i,j)$ with $i \in S$ and $j \in [h]$ for which $i$ does not have a neighbor in $B_j$ in $X$ (and hence $A_i$ does not have an edge to $B_j$ in $G$) is at most $\left(e^{-pr/4}\right)^r =e^{-pr^2/4}$.  

\vspace{0.1cm}
In either case, we obtain a bipartite graph $T$ with parts $S$ and $[h]$ where $(i,j) \in S \times [h]$ is an edge if $A_i$ has at least one edge to $B_j$, the parts are of order at least $N := pn/16r$ and the edge density is $1 - \delta$ for some $\delta \leq e^{-pr^2/8}$. Note that any $K_{t,t}$ in $T$ forms a weakly bi-complete $r$-sequence of order $t$ in $G$. 

If $r \geq 4\sqrt{p^{-1}\log n}$, then $pr^2 \geq 16 \log n$ and this edge density is at least $1-n^{-2}$, so $T$ is a complete bipartite graph with parts of order at least $pn/16r$. This gives the third desired bound. 

A classical result of K\H{o}v\'ari, S\'os and Tur\'an \cite{KST} on the Zarankiewicz problem shows that if a bipartite graph $T$ with parts of order at least $N$ has density at least $1-\delta$ and $N{(1-\delta)N \choose t} > (t-1){N \choose t}$, then the bipartite graph contains $K_{t,t}$. Notice that this inequality holds if $(1-\delta-\frac{t}{N})^t \geq t/N$.

If $r \leq 2p^{-1/2}$, $p \geq n^{-1/3}$ and $t \leq \frac{\log n}{4\log (32/pr^2)}$, then, letting $x=pr^2/8 \leq 1/2$, we see that $T$ has edge density at least $1-e^{-x} \geq x/2$. However, $x \geq 4t/N$, so $T$ contains a $K_{t,t}$ if $(x/4)^t \geq t/N$. But
$$(x/4)^t \geq n^{-1/4} \geq 32p^{-3/2}tn^{-1} \geq  t16r/pn = t/N,$$
and we have shown the first desired bound. 

Suppose now that we are trying to obtain the second desired bound. Since $4p^{-1/2} \leq r < \sqrt{p^{-1}\log n}$, $p \geq n^{-1/5}$ and $t \leq \frac{1}{16}e^{pr^2/8}\log n$, we have $\delta \geq e^{-pr^2/8} \geq n^{-1/8}$ and $t \leq n^{1/4}$. Therefore, we have $\delta N = \delta pn/16r \geq pn^{7/8}/16r \geq n^{1/2} \geq t$ and $t/N \leq n^{-1/4}$, so the desired inequality for the K\H{o}v\'ari--S\'os--Tur\'an result holds if $(1-2\delta)^t \geq n^{-1/4}$. Taking the logarithm of both sides and noting that $\log(1-2\delta) \geq -4\delta$ as $\delta \leq e^{-2}$, we see that this holds as long as $t \leq \frac{\log n}{16\delta}\leq\frac{1}{16}e^{pr^2/8}\log n$, which is a given assumption. This completes the proof of Theorem \ref{thm1}. \qed

\vspace{3mm}
To prove Theorem~\ref{minortheorem}, we combine the previous embedding technique with the following consequence of dependent random choice (discussed in the previous section) taken from~\cite{FLS}.   

\begin{lemma}\label{drclemma}
Let $H=(U,V,E)$ be a bipartite graph with $|U|=|V|=n/2$ and at least $pn^2/4$ edges. Then, if $p^2 n \geq 1600$, there is a subset $X \subset U$ with $|X| \geq pn/50$ such that for every pair of vertices $x,y \in X$, there are at least $10^{-9}p^5n$ internally vertex-disjoint paths with four edges between $x$ and $y$ with internal vertices not in $X$. 
\end{lemma}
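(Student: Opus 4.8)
The plan is to prove this by a one-vertex dependent-random-choice step followed by a greedy packing of four-edge paths. For $x,w\in U$ write $d(x,w)=|N(x)\cap N(w)|$ for their number of common neighbours in $V$, and fix a codegree threshold $\theta:=\max\bigl(2\cdot 10^{-9}p^{3}n,\,2\bigr)$; note $\theta\ge 2\cdot 10^{-9}p^{5}n$, and that the hypothesis $p^{2}n\ge 1600$ makes $\theta/(p^{2}n)$ a small absolute constant.

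First I would isolate a single well-behaved neighbourhood. Pick $a\in V$ uniformly at random and consider $N(a)\subseteq U$, calling a pair $\{x,y\}\subseteq U$ \emph{bad} if $d(x,y)<\theta$. Three moment computations drive the argument: $\mathbb{E}\,|N(a)|=e(H)/|V|\ge pn/2$; $\mathbb{E}\,|N(a)|^{2}=\tfrac{1}{|V|}\bigl(2\sum_{\{x,y\}\subseteq U}d(x,y)+e(H)\bigr)\ge p^{2}n^{2}/8$, using $\sum_{\{x,y\}}d(x,y)=\sum_{b\in V}\binom{|N(b)|}{2}$ and convexity; and $\mathbb{E}\,[\#\{\text{bad pairs in }N(a)\}]=\sum_{\{x,y\}\text{ bad}}d(x,y)/|V|<\theta\binom{|U|}{2}/|V|<\theta n/4$. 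A routine averaging then produces a vertex $a$ with $|N(a)|\ge pn/4$ and at most $\mu\binom{|N(a)|}{2}$ bad pairs inside $A_{0}:=N(a)$, with $\mu=O(\theta/(p^{2}n))$ as small as one wants: letting $\mathcal{B}$ be the set of $a$ failing the bad-pair bound, one has $\sum_{a\in\mathcal{B}}|N(a)|^{2}=O(\theta n^{2}/\mu)$, so by Cauchy--Schwarz $\sum_{a\in\mathcal{B}}|N(a)|<e(H)/2$ once $\mu\gg\theta/(p^{2}n)$, leaving some $a\notin\mathcal{B}$ with $|N(a)|\ge pn/4$.

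Next I would clean $A_{0}$ and extract $X$. Deleting from $A_{0}$ every vertex lying in more than $4\mu|A_{0}|$ bad pairs removes fewer than $|A_{0}|/4$ vertices (the bad-degrees sum to at most $\mu|A_{0}|^{2}$), so the remaining set $A$ satisfies $|A|\ge \tfrac34|A_{0}|\ge \tfrac{3}{16}pn$ and every vertex of $A$ has at most $4\mu|A_{0}|$ bad partners in $A$. Let $X$ be any $\lceil pn/50\rceil$-element subset of $A$, and for $x,y\in X$ set $G_{xy}=\{w\in A\setminus X:w\ne x,y,\ d(x,w)\ge\theta,\ d(y,w)\ge\theta\}$; discarding the bad partners of $x$ and of $y$ gives $|G_{xy}|\ge|A|-|X|-8\mu|A_{0}|-2\ge \tfrac14 pn$ once $\mu$ is small. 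Finally, for each pair $x,y\in X$ I would build the required paths greedily: at the $i$-th step choose a hitherto unused $w_{i}\in G_{xy}$, then $a_{i}\in N(x)\cap N(w_{i})$, then $b_{i}\in N(y)\cap N(w_{i})$, in each case avoiding the at most $2i$ vertices of $V$ used so far — possible since $|N(x)\cap N(w_{i})|=d(x,w_{i})\ge\theta$ and likewise for $y$, and $a_i\ne b_i$, $w_i\notin\{x,y\}$ by construction. This produces at least $\min(|G_{xy}|,\theta/2)$ internally vertex-disjoint paths with vertex sequence $x,a_{i},w_{i},b_{i},y$; by the choice of $\theta$ this is at least $10^{-9}p^{5}n$ (and at least $1$ in the sparsest range, where $10^{-9}p^{5}n<1$ and $\theta=2$). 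Each internal vertex avoids $X$: the $a_{i},b_{i}$ lie in $V$ while $X\subseteq U$, and $w_{i}\in A\setminus X$. Carrying the explicit constants through finishes the proof.

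The step I expect to be the main obstacle is the first one: the dependent-random-choice output must be simultaneously large — of size $\Omega(pn)$, so that after removing $X$ there is still an $\Omega(pn)$-size pool of middle vertices — and essentially free of low-codegree pairs. These pull in opposite directions (common neighbourhoods of several random vertices have better codegree control but shrink geometrically), and the reason a single random neighbourhood suffices is that the target $10^{-9}p^{5}n$ is so small that one can work with a codegree threshold of only $\Theta(p^{3}n)$, respectively $2$ in the sparsest case; the density hypothesis $p^{2}n\ge 1600$ is exactly what guarantees that at this threshold a typical neighbourhood has only a vanishing fraction of bad pairs and the $\Theta(p^{2}n^{2})$ second moment used to locate a large such neighbourhood. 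The greedy packing and the bookkeeping in the later steps are then routine.
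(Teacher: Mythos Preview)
The paper does not give a proof of this lemma; it quotes the statement from Fox--Lee--Sudakov~\cite{FLS}, where it is proved by essentially the dependent-random-choice argument you outline: pick a random vertex in $V$, pass to its neighbourhood $A_0\subseteq U$, control the number of low-codegree pairs inside $A_0$, clean to a set $A$ in which every vertex has few bad partners, take $X\subset A$ of size $pn/50$, and for each $x,y\in X$ greedily build paths $x\,a_i\,w_i\,b_i\,y$ through midpoints $w_i\in A\setminus X$ good for both $x$ and $y$. Your outline is correct and matches that approach.

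Two minor numerical points deserve tightening when you write this out. First, the claimed inequality $|G_{xy}|\ge|A|-|X|-8\mu|A_0|\ge\tfrac14 pn$ cannot hold as stated, since already $|A|-|X|\le\tfrac{3}{16}pn-\tfrac{1}{50}pn<\tfrac14pn$; tracking the constants honestly gives $|G_{xy}|\ge cpn$ for a smaller absolute $c>0$, which is still far more than the $10^{-9}p^5n$ you need. Second, the second-moment bound $\mathbb{E}\,|N(a)|^2\ge p^2n^2/8$ is listed among the ``three moment computations'' but is not actually used in your averaging step: the Cauchy--Schwarz argument only requires the bad-pair estimate $\sum_{a\in\mathcal B}|N(a)|^2=O(\theta n^2/\mu)$ together with the first moment $\sum_a|N(a)|=e(H)$. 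These are cosmetic; the structure of the proof is sound.
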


{\bf Proof of Theorem~\ref{minortheorem}:} Let $G$ be a graph with edge density $p$ on $n$ vertices, so it has $p{n \choose 2} \geq pn^2/4$ edges. By deleting vertices of degree less than $pn/8$ one at a time, we arrive at a subgraph $G'$ with minimum degree at least $pn/8$ and at least $pn^2/4-n(pn/8)=pn^2/8$ edges. Let $v$ denote the number of vertices in $G'$, so $p^{1/2}n/4 \leq v \leq n$.

Let $H=(U,V,E)$ be a bipartite subgraph of $G'$ with parts of order $v/2$ and at least $pn^2/32$ edges such that the minimum degree of $H$ is at least $pn/32$. Such a bipartite subgraph exists by considering a random bipartition. Note that the number of edges in $H$ is at least $pn^2/32=(pn^2/8v^2)v^2/4$. By Lemma \ref{drclemma}, there is a subset $X \subset U$ with $|X| \geq (pn^2/8v^2)v/50 \geq pn/400$ such that for every pair of vertices $x,y \in X$, there are at least $10^{-9}(pn^2/8v^2)^5v \geq 10^{-14}p^5n$ internally vertex-disjoint paths with four edges between $x$ and $y$ with internal vertices not in $X$. Let $X'$ be an arbitrary subset of $X$ of size exactly $pn/400$.  

As every vertex in $X$ (and hence $X'$) has degree at least $pn/32$ in $H$, there are at least $(pn/32)|X'|$ edges between $X'$ and $V$. Delete all vertices in $V$ with fewer than $(pn/32v)|X'|$ neighbors in $X'$ and let $Z$ be the remaining subset of $V$. The number of edges between $X'$ and $Z$ is at least $(pn/32)|X'|-(pn/32v)|X'|(v/2)=pn|X'|/64$. Note that $|Z| \geq pn/64>|X'|$. Let $Z' \subset Z$ be a subset with $|Z'|=|X'|$ such that the number of edges between $X'$ and $Z'$ at least 
$p|X'||Z'|/64$. Such a subset $Z'$ exists by considering a random subset of $Z$ of order $|Z'|$. 
Consider the bipartite graph $H'$ between $X'$ and $Z'$. It has $2|Z'|$ vertices and at least $p|X'||Z'|/64=(p|X'|/64|Z'|)(2|Z'|)^2/4$ edges. Applying Lemma \ref{drclemma} to $H'$, there is a subset $Y \subset Z'$ with $|Y| \geq (p|X'|/64|Z'|)(2|Z'|)/50=p|X'|/1600$ such that for every pair of vertices $x,y \in Y$, there are at least $10^{-9} (p|X'|/64|Z'|)^5(2|Z'|) \geq 10^{-18}p^5 |X'|$ internally vertex-disjoint paths with four edges between $x$ and $y$ with internal vertices not in $Y$. Since every vertex in $Z$ has at least $(pn/32v)|X'|$ neighbors in $X'$, the density between $X'$ and $Y$ is at least $pn/32v$. Let $W \subset X'$ be a subset of order $|Y|$ such that the edge density between $W$ and $Y$ is at least $pn/32v \geq p/32$. Such a subset exists by considering a random subset of $X'$ of order $|Y|$. 

By Theorem \ref{thm1} (or rather its proof, as we pass to a balanced bipartite subgraph), the bipartite graph between $W$ and $Y$ contains a weakly bi-complete $r$-sequence of order $t$ with $t = \frac{1}{16}e^{pr^2/256}\log |Y| \geq \frac{1}{32}e^{pr^2/256}\log n$. For each part $A$ used to make this weakly bi-complete $r$-sequence, fix a vertex $a \in A$ and consider any other vertex $b \in A$. There are at least $10^{-18}p^5|X'|>10^{-21}p^6 n>8rt$ internally vertex disjoint paths, so we can find one of these internal paths so that the vertices have not already been used and add the three internal vertices of the path to connect $a$ and $b$. Doing this for every vertex $b \in A \setminus a$, we get a connected set $A'$ with at most $4r$ vertices. We can do this for each of the $2t$ sets $A$ making up the weakly bi-complete $r$-sequence of order $t$. We thus obtain a $K_{t,t}$-minor with each part corresponding to a vertex of order at most $4r$. From a matching in the $K_{t,t}$, we get a $K_t$-minor with each part corresponding to a vertex of order at most $8r$. \qed

To prove the claim that each set in the $K_t$-minor may be chosen to have diameter at most $9$, suppose that $A$ and $B$ are two sets in the weakly bi-complete $r$-sequence and the union of the sets $A'$ and $B'$ formed from $A$ and $B$ corresponds to a vertex in the $K_t$-minor. If we let $ab$ be an edge with $a \in A$ and $b \in B$, we see that we could have chosen the sets $A'$ and $B'$ so that every vertex in $A'$ is connected to $a$ by a path of length $4$ and every vertex in $B'$ is connected to $b$ by a path of length $4$. Since $a$ and $b$ are joined, this clearly implies that $A' \cup B'$ has diameter at most $9$.

\section{Induced Ramsey numbers and Ruzsa--Szemer\'edi graphs} \label{sec:matchings}

A graph $H$ is said to be an {\it induced subgraph} of $G$ if $V(H) \subset V(G)$ and two vertices of $H$ are adjacent if and only if they are adjacent in $G$. We write $G \rightarrow_{ind} (H_1,H_2)$ if every red/blue-coloring of the edges of $G$ contains an induced copy of $H_1$ all of whose edges are red or an induced copy of $H_2$ all of whose edges are blue. The {\it induced Ramsey number} $r_{\textrm{ind}}(H_1,H_2)$ is the minimum $N$ for which there exists a graph $G$ on $N$ vertices with $G \rightarrow_{ind} (H_1,H_2)$. When $H_1 = H_2 = H$, we simply write $r_{\textrm{ind}}(H)$.

We will be interested in the induced Ramsey number of trees. It is easy to show that the usual Ramsey number for trees is linear in the number of vertices. For some trees, such as paths, it is even possible to show~\cite{HKL95} that the induced Ramsey number grows linearly in the number of vertices. However, Fox and Sudakov~\cite{FS08} showed that there exist trees $T$ for which the induced Ramsey number grows superlinearly. More precisely, they showed that $r_{\textrm{ind}}(K_{1, t}, M_t)$ is superlinear in $t$, where $M_t$ is the matching with $t$ edges. It is then sufficient if $T$ contains both $K_{1,t}$ and $M_t$ as induced subgraphs.

Fox and Sudakov~\cite{FS08} proved their result by an appeal to the regularity lemma. In this section, we prove a strengthening of this result by showing that there is a close connection between $r_{\textrm{ind}}(K_{1, t}, M_n)$ and the celebrated Ruzsa--Szemer\'edi induced matching problem~\cite{RuSz}. We say that a graph $G=(V,E)$ is an {\it $(n,t)$-Ruzsa--Szemer\'edi graph} (or an $(n,t)$-RS graph, for short) if its edge set is the union of $t$ pairwise disjoint induced matchings, each of size $n$. 

\begin{theorem} \label{thm:toRS}
If $G \rightarrow_{\textrm{ind}} (K_{1,t},M_n)$, then $G$ contains a subgraph which is an $(n,t)$-RS graph. 
\end{theorem}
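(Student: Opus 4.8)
The plan is to take a graph $G$ with $G \rightarrow_{\textrm{ind}} (K_{1,t}, M_n)$ and extract the Ruzsa--Szemer\'edi structure directly from the failure of a well-chosen coloring. Observe first that $G$ must have minimum degree at least $t$: if some vertex $v$ had degree less than $t$, then coloring all edges incident to $v$ red and everything else blue would forbid a red $K_{1,t}$ (the only vertex that could be a center of an all-red star is $v$, and it has too few red edges) while also forbidding a blue $M_n$ unless $G$ itself already contains a blue-heavy structure; more to the point, this line of reasoning suggests the right coloring to consider.

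The key construction is the following. Let $c$ be a red/blue-coloring of $E(G)$ chosen so that the blue graph $B$ is $M_n$-free; concretely, one wants $c$ to be a coloring witnessing that $G$ does \emph{not} arrow $M_n$ in blue after one removes the structure we are about to build. First I would let $B$ be a maximal (under inclusion, or in terms of a suitable greedy process) subgraph of $G$ such that $B$ contains no induced matching of size $n$ in $G$ — here ``induced'' is measured in $G$, which is the subtlety. Color $B$ blue and $E(G) \setminus E(B)$ red. Since $G \rightarrow_{\textrm{ind}} (K_{1,t}, M_n)$ and there is no blue induced $M_n$ in $G$, there must be a vertex $v$ all of whose $\geq t$ incident edges are red, i.e.\ $v$ has at least $t$ neighbors through edges of $E(G)\setminus E(B)$. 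The plan is then to iterate: repeatedly pull out such a red star (or rather the relevant matching structure it forces), recolor, and repeat, collecting the pieces.

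The heart of the matter is turning ``red stars exist at every stage'' into ``$t$ pairwise disjoint induced matchings of size $n$.'' Here is the intended mechanism. Consider the $t$ colorings $c_1, \dots, c_t$ obtained greedily: having built blue graphs $B_1 \supset B_2 \supset \cdots$ where $B_{i+1}$ is a maximal $M_n$-free-in-$G$ subgraph of $B_i$ and $M_i := E(B_i) \setminus E(B_{i+1})$ is the edge set removed at stage $i$. I claim each $M_i$ contains, or can be refined to contain, an induced matching of size $n$ in $G$: by maximality of $B_{i+1}$, adding back any single edge $e \in M_i$ to $B_{i+1}$ creates an induced $M_n$ in $G$ using $e$, and a standard argument (choosing a vertex of degree $\geq t$ for the red/arrowing step, then peeling the induced matchings one at a time from each $M_i$) shows that the $M_i$'s, being pairwise edge-disjoint by construction, supply $t$ pairwise disjoint induced matchings each of size at least $n$. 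Restricting each to exactly $n$ edges gives the $(n,t)$-RS subgraph.

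The step I expect to be the main obstacle is getting the inductive/greedy scheme to simultaneously guarantee (a) that we can run it for $t$ rounds — which is exactly where the minimum degree bound $\delta(G) \geq t$ coming from the $K_{1,t}$ side of the arrowing must be deployed, since each round ``uses up'' some red degree at the critical vertex — and (b) that each peeled layer genuinely contains an \emph{induced} (in $G$, not merely in $B_i$) matching of size $n$, which is forced by the arrowing hypothesis applied to the coloring $c_i$ but requires care because the induced condition refers to all of $G$'s edges, including the red ones already removed. Once these two points are pinned down, the disjointness of the matchings is automatic from the nested construction and the conclusion follows.
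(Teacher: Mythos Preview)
Your proposal has a genuine structural gap: the iterative scheme you describe collapses after one step. You set $B_{i+1}$ to be a maximal induced-$M_n$-free subgraph of $B_i$, but once $B_2$ is induced-$M_n$-free, it is its own maximal such subgraph, so $B_3 = B_2$ and every $M_i$ with $i \geq 2$ is empty. You therefore never reach $t$ layers. Moreover, even the single nontrivial layer $M_1 = E(G) \setminus E(B_2)$ need not contain an induced matching of size $n$: maximality of $B_2$ only says that for each $e \in M_1$, the graph $B_2 \cup \{e\}$ contains an induced $M_n$, but that matching may use $n-1$ edges of $B_2$ and only the single edge $e$ from $M_1$. Finally, the red $K_{1,t}$ you locate is never actually converted into matchings; the phrase ``each round uses up some red degree at the critical vertex'' is not connected to any concrete extraction step.

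The paper's argument is much simpler and bypasses all of this. Instead of iterating on maximal $M_n$-free subgraphs, it iterates directly on \emph{extracting} induced matchings: greedily pull out pairwise edge-disjoint induced matchings of size $n$ from $G$ until none remain. If you obtain at least $t$ of them, their union is the desired $(n,t)$-RS subgraph. Otherwise, color the (fewer than $t$) extracted matchings red and all remaining edges blue. Each matching contributes at most one edge to any vertex, so the red maximum degree is less than $t$, hence there is no red $K_{1,t}$; and the blue graph has no induced $M_n$ because the greedy process stopped. This contradicts $G \rightarrow_{\mathrm{ind}} (K_{1,t}, M_n)$. Note that the $K_{1,t}$ side enters only once, at the end, via this degree count --- not through repeatedly locating red stars. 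Your instinct to produce a contradictory coloring was right; the object to iterate on is the matchings themselves, not $M_n$-free subgraphs.
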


\begin{proof}
Pick out disjoint induced matchings of size $n$ from $G$ until there are no more induced matchings of this size. If at least $t$ induced matchings are picked out, then the union of these $t$ induced matchings makes the subgraph of $G$ which is an $(n,t)$-RS graph. Otherwise, we color the edges of the (fewer than $t$) induced matchings in red and the remaining edges in blue. The red graph won't contain a red $K_{1,t}$ as each of the (fewer than $t$) induced matchings contributes at most one to the degree of each vertex. Moreover, since we cannot pick out another disjoint induced matching, the blue graph does not contain an induced $M_n$, contradicting our assumption that $G \rightarrow_{\textrm{ind}} (K_{1,t},M_n)$.
\end{proof}

To recover the quantitative statement that $r_{\textrm{ind}}(K_{1, t}, M_t)$ is superlinear in $t$, we use Theorem~\ref{thm:toRS} to deduce that if $G \rightarrow_{ind} (K_{1,t},M_t)$ then $G$ contains a $(t,t)$-RS graph. However, the work of Ruzsa and Szemer\'edi~\cite{RuSz} shows that any such graph must have $t (\log^*t)^c$ vertices for some positive constant $c$, where $\log^*t$ is the slowly-growing function defined by $\log^* t = 0$ if $t \leq 1$ and $\log^*t = 1 + \log^*(\log t)$ if $t > 1$. Fox's bound for the removal lemma~\cite{Fo}, which we will discuss further in the next section, improves this estimate to $t e^{c \log^*t}$ for some positive constant $c$. This yields the following corollary.

\begin{corollary}
There exists a positive constant $c$ such that $r_{\textrm{ind}}(K_{1, t}, M_t) \geq t e^{c \log^* t}$.
\end{corollary}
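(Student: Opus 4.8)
The plan is to combine Theorem~\ref{thm:toRS} with the known lower bound on the number of vertices in a dense Ruzsa--Szemer\'edi graph. First I would observe that if $G \rightarrow_{ind} (K_{1,t}, M_t)$, then by Theorem~\ref{thm:toRS} (taking $n = t$) the graph $G$ contains a subgraph $G'$ which is a $(t,t)$-RS graph: that is, $E(G')$ is a union of $t$ pairwise disjoint induced matchings, each of size $t$, so $G'$ has $t^2$ edges. In particular, $G$ has at least as many vertices as the smallest $(t,t)$-RS graph.

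Next I would invoke the quantitative bound coming from the triangle removal lemma. The standard reduction (going back to Ruzsa and Szemer\'edi) shows that a graph on $N$ vertices whose edge set is a union of $t$ induced matchings of size $t$ must have $N$ large: if $N$ were too small relative to $t$, one could build on $\Theta(N)$ vertices a tripartite graph with $t$ matchings each of size $t$ where every edge lies in exactly one triangle, and then the removal lemma would force $t \cdot t = o(N^2)$ triangle-edges to be removable one per triangle --- but there are $t^2$ edge-disjoint such triangle-edges, a contradiction once $N$ is small enough. Quantitatively, Fox's bound for the removal lemma~\cite{Fo} shows that any $(t,t)$-RS graph has at least $t\, e^{c \log^* t}$ vertices for some absolute constant $c > 0$ (this improves the earlier $t (\log^* t)^{c}$ bound of Ruzsa and Szemer\'edi~\cite{RuSz}). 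Applying this to $G'$, and hence to $G$, gives $|V(G)| \geq t\, e^{c \log^* t}$.

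Finally, since this holds for every $G$ with $G \rightarrow_{ind} (K_{1,t}, M_t)$, taking the minimum over all such $G$ yields $r_{\textrm{ind}}(K_{1,t}, M_t) \geq t\, e^{c \log^* t}$, as desired.

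\textbf{Main obstacle.} The only genuinely nontrivial input is the lower bound on the number of vertices of a $(t,t)$-RS graph, and this is precisely the content of the removal lemma estimate, which I would cite from~\cite{Fo} rather than reprove. Given Theorem~\ref{thm:toRS}, the rest of the argument is immediate. One small point worth stating carefully is that Theorem~\ref{thm:toRS} produces a \emph{subgraph} $(t,t)$-RS graph of $G$ on at most $|V(G)|$ vertices, so the vertex lower bound for RS graphs transfers directly to $|V(G)|$; there is no loss in the reduction.
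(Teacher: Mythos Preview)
Your proposal is correct and follows essentially the same approach as the paper: apply Theorem~\ref{thm:toRS} with $n=t$ to obtain a $(t,t)$-RS subgraph of any $G$ with $G \rightarrow_{\textrm{ind}} (K_{1,t},M_t)$, and then invoke Fox's quantitative removal lemma bound to conclude that such a graph must have at least $t\,e^{c\log^* t}$ vertices. The paper's argument is exactly this two-step reduction, with the RS vertex lower bound cited rather than reproved.
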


To show that Ruzsa--Szemer\'edi graphs also give rise to graphs $G$ for which $G \rightarrow_{\textrm{ind}} (K_{1,t},M_n)$, we first show that we may assume our Ruzsa--Szemer\'edi graph is bipartite.

\begin{lemma} \label{lem:tobip}
If there is an $(n,t)$-RS graph $G=(V,E)$ on $N$ vertices, then there is a bipartite $(2n,t)$-RS graph $B$ on $2N$ vertices.
\end{lemma}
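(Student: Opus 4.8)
The plan is to use the standard bipartite double cover construction. Given the $(n,t)$-RS graph $G=(V,E)$ on $N$ vertices with $E = M_1 \cup \dots \cup M_t$, a union of $t$ pairwise disjoint induced matchings each of size $n$, I would form a bipartite graph $B$ on vertex set $V' \cup V''$, where $V'$ and $V''$ are two disjoint copies of $V$. For $v \in V$, write $v'$ and $v''$ for its two copies. Put an edge between $u'$ and $v''$ in $B$ exactly when $uv \in E$. This gives a bipartite graph on $2N$ vertices, and every edge $uv \in E$ gives rise to exactly two edges of $B$, namely $u'v''$ and $v'u''$, so $|E(B)| = 2|E|$.

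Next I would define the matchings in $B$. For each $i \in [t]$, let $M_i' = \{\, u'v'' : uv \in M_i \,\} \cup \{\, v'u'' : uv \in M_i \,\}$. Since $M_i$ is a matching of size $n$ in $G$, the $2n$ edges in $M_i'$ are vertex-disjoint in $B$: each vertex $u'$ (resp. $u''$) is covered by at most one edge of $M_i'$ because $u$ is covered by at most one edge of $M_i$. Thus $M_i'$ is a matching of size $2n$ in $B$, and the $M_i'$ are pairwise disjoint as edge sets since the $M_i$ are, so $E(B) = M_1' \cup \dots \cup M_t'$ exhibits $B$ as a union of $t$ pairwise disjoint matchings of size $2n$.

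The main point, and the step requiring a little care, is to check that each $M_i'$ is \emph{induced} in $B$. Suppose $u'v''$ and $x'y''$ are two distinct edges of $M_i'$ (so $uv, xy \in M_i$, possibly with $\{u,v\} = \{x,y\}$ if the two edges are the two copies of a single edge of $M_i$) and suppose $u'y''$ is an edge of $B$. Then $uy \in E$. Now $u$ is an endpoint of the edge $uv \in M_i$ and $y$ is an endpoint of the edge $xy \in M_i$. If the edges $uv$ and $xy$ of $M_i$ are distinct, then since $M_i$ is an induced matching in $G$ the only edges of $E$ among $\{u,v,x,y\}$ are $uv$ and $xy$, forcing $uy \in \{uv, xy\}$, i.e. $y = v$; but then $x$ is the other endpoint of the edge $xy = xv \in M_i$, contradicting that $M_i$ is a matching (as $v$ would be covered twice). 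If instead $uv = xy$ as edges of $M_i$, then $\{u,v\} = \{x,y\}$; the edges $u'v''$ and $x'y'' = y'x''$ of $M_i'$ being distinct forces $x = v$ and $y = u$, so $u'y'' = u'u''$, which is never an edge of $B$ (the double cover has no such edges unless $G$ has a loop, which it does not). Hence no chord $u'y''$ exists, and symmetrically no chord of the form $x'v''$ exists; therefore $M_i'$ is an induced matching in $B$. This establishes that $B$ is a bipartite $(2n,t)$-RS graph on $2N$ vertices, completing the proof.

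\begin{proof}
Let $E = M_1 \cup \dots \cup M_t$ be a decomposition of $E$ into $t$ pairwise disjoint induced matchings, each of size $n$. Form the bipartite graph $B$ on vertex set $V' \cup V''$, where $V' = \{v' : v \in V\}$ and $V'' = \{v'' : v \in V\}$ are two disjoint copies of $V$, and where $u'v''$ is an edge of $B$ if and only if $uv \in E$. Then $|V(B)| = 2N$, and since each $uv \in E$ contributes exactly the two edges $u'v''$ and $v'u''$ to $B$, we have $|E(B)| = 2|E|$.

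For each $i \in [t]$, set $M_i' = \{u'v'' : uv \in M_i\} \cup \{v'u'' : uv \in M_i\}$. If some vertex $w'$ were covered by two edges $w'v''$ and $w'y''$ of $M_i'$ with $v \neq y$, then $wv, wy \in M_i$, contradicting that $M_i$ is a matching; the same holds for vertices $w''$. So each $M_i'$ is a matching in $B$, and $|M_i'| = 2|M_i| = 2n$. The sets $M_i'$ are pairwise disjoint because the $M_i$ are, and their union is $E(B)$.

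It remains to verify that each $M_i'$ is induced. Suppose $u'y''$ is an edge of $B$ with $u'$ covered by an edge of $M_i'$ and $y''$ covered by an edge of $M_i'$; we must show $u'y'' \in M_i'$. By construction $u$ is an endpoint of some $e \in M_i$ and $y$ is an endpoint of some $e' \in M_i$, and $uy \in E$. If $e \neq e'$, then since $M_i$ is induced the only edges of $E$ inside $V(e) \cup V(e')$ are $e$ and $e'$, so $uy \in \{e, e'\}$; as $u \in V(e)$ and $y \in V(e')$, this forces $uy = e = e'$, a contradiction. Hence $e = e'$, say $e = uy \in M_i$ (the case $u = y$ is impossible since $B$ has no edge $u'u''$). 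Then $u'y'' \in M_i'$ by definition. The same argument applied to a putative edge $x'v''$ shows $M_i'$ has no such chord either. Therefore $M_i'$ is an induced matching of size $2n$ in $B$, and $B$ is a bipartite $(2n,t)$-RS graph on $2N$ vertices.
\end{proof}
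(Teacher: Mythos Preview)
Your proof is correct and uses exactly the same construction as the paper, namely the bipartite double cover of $G$. The paper's proof is a terse two sentences that simply assert each induced matching of size $n$ in $G$ yields an induced matching of size $2n$ in $B$; you have supplied the routine verification of this claim in full.
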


\begin{proof}
Let $B$ be the bipartite graph with parts $V_1$ and $V_2$, each a copy of $V$, where $(u,v) \in V_1 \times V_2$ is an edge if and only if $(u,v)$ is an edge of $G$. Each of the $t$ induced matchings of size $n$ in $G$ corresponds to an induced matching of size $2n$ in $B$ and these induced matchings  make up $B$. 
\end{proof}

It is now straightforward to show that bipartite Ruzsa--Szemer\'edi graphs $G$ satisfy $G \rightarrow_{\textrm{ind}} (K_{1,n},M_n)$.

\begin{theorem} \label{thm:toind}
Suppose that $c \geq 2$ and $G$ is a bipartite $(cn,N/c)$-RS graph on $N$ vertices. Then $G \rightarrow_{ind} (K_{1,n},M_n)$. 
\end{theorem}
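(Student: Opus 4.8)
The plan is to take a bipartite $(cn, N/c)$-RS graph $G$ and show directly that any red/blue coloring of its edges yields either a red induced $K_{1,n}$ or a blue induced $M_n$. Recall that $E(G)$ is the disjoint union of $t = N/c$ induced matchings $M^{(1)}, \dots, M^{(t)}$, each of size $cn$. The key observation is that an induced subgraph of $G$ that lives inside a single matching $M^{(i)}$ is automatically an induced subgraph of $G$ as well, since $M^{(i)}$ is an \emph{induced} matching: no two non-matched endpoints within $M^{(i)}$ are joined by an edge of $G$. So to find an induced $M_n$ it suffices to find $n$ edges of a single matching $M^{(i)}$ all colored blue; and to find an induced $K_{1,n}$ it suffices to find a vertex $v$ with $n$ red edges at $v$ landing in $n$ distinct matchings, since those $n$ neighbors of $v$, being matched to $v$ in distinct induced matchings, form an independent set (any edge between two of them would have to lie in some matching $M^{(j)}$, but then $v$'s edge to one of them — also in $M^{(j)}$ or elsewhere — would violate inducedness; more simply, a red $K_{1,n}$ only needs the $n$ leaf vertices pairwise non-adjacent, and one checks this from the induced-matching structure).

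The main step is a counting dichotomy. Fix the coloring. For each matching $M^{(i)}$, either it contains at least $n$ blue edges — in which case those $n$ blue edges give a blue induced $M_n$ and we are done — or it contains more than $cn - n = (c-1)n \geq n$ red edges (using $c \geq 2$). So assume every $M^{(i)}$ has more than $(c-1)n$ red edges; summing over all $t = N/c$ matchings, the total number of red edges is more than $(c-1)n \cdot N/c$. Each red edge is incident to two vertices, so by averaging some vertex $v$ is incident to more than $2(c-1)nN/(cN) = 2(c-1)n/c \geq n$ red edges (again since $c \geq 2$, we have $2(c-1)/c \geq 1$). Because distinct red edges at $v$ lie in distinct matchings (each matching contributes at most one edge at any given vertex), these $n$ red edges at $v$ belong to $n$ distinct induced matchings.

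It remains to check that these $n$ red neighbors $u_1, \dots, u_n$ of $v$ together with $v$ span an induced $K_{1,n}$ in $G$, i.e. that $u_a u_b \notin E(G)$ for all $a \neq b$. Suppose $u_a u_b \in E(G)$; then this edge lies in some matching $M^{(j)}$. But the edge $v u_a$ lies in some matching $M^{(i_a)}$ and $v u_b$ in $M^{(i_b)}$ with $i_a \neq i_b$. Since $M^{(j)}$ is an induced matching containing $u_a u_b$, it cannot contain any other edge incident to $u_a$ or $u_b$; in particular $j \neq i_a$ would be fine, but then consider that $M^{(i_a)}$ is induced and contains $v u_a$, hence contains no other edge at $u_a$ — this is consistent. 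The clean way: the edge $u_a u_b$ together with $v u_a$ forms a path of length two with center $u_a$; since $u_a u_b$ and $v u_a$ are edges in matchings $M^{(j)}$ and $M^{(i_a)}$ respectively and $M^{(i_a)}$ is induced, no edge of $M^{(i_a)}$ other than $v u_a$ touches $u_a$, so $j \neq i_a$; but then $M^{(j)}$ being induced and containing $u_a u_b$ means $M^{(j)}$ contains no edge at $v$ adjacent to $u_a$ — no contradiction yet. I expect this last inducedness verification to be the only genuinely fiddly point: the cleanest route is to observe that $G$ is bipartite with the $u_i$ all in the same part (being neighbors of $v$), so no edge $u_a u_b$ can exist at all, making the induced $K_{1,n}$ check immediate and bypassing the matching gymnastics entirely. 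Once that is noted, the proof is just the averaging dichotomy above.
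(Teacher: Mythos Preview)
Your argument is correct and follows essentially the same approach as the paper: a counting dichotomy yielding either an induced matching with enough blue edges (giving a blue induced $M_n$) or a vertex of red degree at least $n$ (giving a red induced $K_{1,n}$, with inducedness immediate from bipartiteness). The paper phrases the dichotomy globally---at least half of the $nN$ edges are blue, or at least half are red---rather than per-matching, but the content is the same; your final observation that bipartiteness forces the red neighbors of $v$ to be pairwise non-adjacent is exactly the paper's argument, and you should simply discard the inconclusive matching-structure attempt that precedes it.
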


\begin{proof}
In any red/blue-coloring of the edges of $G$, at least half of the edges are blue or half of them are red. In the former case, at least one of the induced matchings of size $cn$ has at least half of its edges in color blue and, since $cn/2 \geq n$, these edges form a blue induced matching of size $n$. In the latter case, as there are $nN$ edges and so at least $nN/2$ red edges, there is a vertex of red degree at least $n$. Since $G$ is bipartite, this induces a red star $K_{1,n}$. 
\end{proof}

As observed by Ruzsa and Szemer\'edi~\cite{RuSz}, a construction due to Behrend~\cite{Be46} allows one to show that there are $(N/e^{c \sqrt{\log N}}, N/5)$-RS graphs on $N$ vertices. Applying Lemma~\ref{lem:tobip} and Theorem~\ref{thm:toind}, we get the following corollary. 

\begin{corollary}
There exists a constant $c$ such that $r_{\textrm{ind}}(K_{1, t}, M_t) \leq t e^{c \sqrt{\log t}}$.
\end{corollary}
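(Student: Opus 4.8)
The plan is to string together the structural lemmas already developed in this section, tracking parameters carefully. First I would invoke Behrend's construction, as repackaged by Ruzsa and Szemer\'edi~\cite{RuSz}: for every $N$ there is an $(N/e^{c_0\sqrt{\log N}}, N/5)$-RS graph $G_0$ on $N$ vertices, where $c_0$ is an absolute constant. The aim is to choose $N$ as a function of $t$ so that the induced-matching size and the number of matchings both exceed what Theorem~\ref{thm:toind} requires to obtain an arrow towards $(K_{1,t},M_t)$.

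Next I would apply Lemma~\ref{lem:tobip} to $G_0$ to pass to a bipartite $(2N/e^{c_0\sqrt{\log N}}, N/5)$-RS graph $B$ on $2N$ vertices. Now I want to apply Theorem~\ref{thm:toind} to $B$ with the roles $n := t$ and total vertex count $N' := 2N$; for this I need $B$ to be a bipartite $(c\, t, N'/c)$-RS graph for some $c \geq 2$. Comparing against what $B$ actually is, I need $c\,t \leq 2N/e^{c_0\sqrt{\log N}}$ (so the matchings are long enough) and $N/5 \geq N'/c = 2N/c$, i.e.\ $c \geq 10$ (so there are enough matchings). Taking $c = 10$, the binding constraint is $10 t \leq 2N e^{-c_0\sqrt{\log N}}$, i.e.\ $N e^{-c_0\sqrt{\log N}} \geq 5t$. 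I would solve this by setting $N = t\, e^{c_1\sqrt{\log t}}$ for a suitable constant $c_1$ (chosen larger than $c_0$): then $\sqrt{\log N} = \sqrt{\log t + c_1\sqrt{\log t}} \leq \sqrt{\log t} + O(1)$ for large $t$, so $N e^{-c_0\sqrt{\log N}} \geq t\, e^{c_1\sqrt{\log t}} e^{-c_0\sqrt{\log t} - O(1)} = t\, e^{(c_1-c_0)\sqrt{\log t} - O(1)} \geq 5t$ once $t$ is large. (For small $t$ the bound is trivial by adjusting the constant $c$ in the statement.)

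With these choices Theorem~\ref{thm:toind} gives $B \rightarrow_{ind} (K_{1,t}, M_t)$, and $B$ has $N' = 2N = 2t\, e^{c_1\sqrt{\log t}}$ vertices, which is at most $t\, e^{c\sqrt{\log t}}$ for an appropriate absolute constant $c > c_1$ (absorbing the factor $2$). By definition of the induced Ramsey number this yields $r_{\textrm{ind}}(K_{1,t}, M_t) \leq t\, e^{c\sqrt{\log t}}$, as claimed.

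The only genuinely delicate point is the parameter bookkeeping in the second step — specifically verifying that $\sqrt{\log N}$ does not blow up when $N$ is a super-$t$ quantity of the form $t\,e^{c_1\sqrt{\log t}}$, so that the exponential saving from Behrend's construction survives the substitution. This is a routine estimate (the sub-polynomial factor $e^{c_1\sqrt{\log t}}$ contributes only a lower-order term inside $\sqrt{\log N}$), but it is where one must be careful to pick $c_1$ strictly between $c_0$ and the final constant $c$. Everything else is a direct concatenation of Lemma~\ref{lem:tobip} and Theorem~\ref{thm:toind} with the stated RS-graph construction.
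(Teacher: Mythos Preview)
Your proposal is correct and follows exactly the approach the paper intends: invoke the Behrend/Ruzsa--Szemer\'edi $(N/e^{c_0\sqrt{\log N}},N/5)$-RS construction, pass to a bipartite RS graph via Lemma~\ref{lem:tobip}, and feed it into Theorem~\ref{thm:toind}. The paper states this in one sentence without the parameter bookkeeping; you have supplied the details (choosing $c=10$, solving for $N$ in terms of $t$, and checking that $\sqrt{\log N}=\sqrt{\log t}+O(1)$), all of which are correct.
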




\section{Colored triangle removal} \label{sec:coloredremoval}

The triangle removal lemma of Ruzsa and Szemer\'edi \cite{RuSz} states that for each $\epsilon>0$ there is $\delta>0$ such that every graph on $n$ vertices with at most $\delta n^3$ triangles can be made triangle-free by removing at most $\epsilon n^2$ edges. That is, a graph with a subcubic number of triangles can be made triangle-free by removing a subquadratic number of edges. The triangle removal lemma has many applications in graph theory, additive combinatorics, discrete geometry and theoretical computer science.

Until recently, the only known proof of the triangle removal lemma used Szemer\'edi's regularity lemma~\cite{Sz76} and gave a weak quantitative bound for $\delta^{-1}$, namely, a tower of $2$s of height polynomial in $\epsilon^{-1}$. Recently, Fox \cite{Fo} found a new proof which avoids the regularity lemma and improves the bound on $\delta^{-1}$ to a tower of $2$s of height logarithmic in $\epsilon^{-1}$. It remains a major open problem to find a bound of constant tower height.

It is easy to show that the triangle removal lemma is equivalent to the following statement: for each $\epsilon>0$ there is $\delta > 0$ such that every tripartite graph on $n$ vertices whose edge set can be partitioned into $\epsilon n^2$ triangles contains at least $\delta n^3$ triangles. We present a simple proof of the following Ramsey-type weakening of this statement with a much better (only double-exponential) bound.

\begin{theorem}\label{main}
Every $r$-edge-coloring of a tripartite graph $G$ with parts $V_0,V_1,V_2$, where $|V_1|=|V_2|=n$, $|V_0| = cn$ and $V_1$ is complete to $V_2$, such that there is a collection of $n^2$ edge-disjoint monochromatic triangles which cover the edges between $V_1$ and $V_2$ contains at least $(4cr)^{-2^{r+3}}n^3$ monochromatic triangles.
\end{theorem}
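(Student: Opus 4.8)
The plan is to iterate a dependent-random-choice / defect-type argument on the colors, peeling off one color at a time while keeping track of a large "core" of triangles that are dense in a single color. Start with the collection $\mathcal{T}$ of $n^2$ edge-disjoint monochromatic triangles covering the complete bipartite graph between $V_1$ and $V_2$. For each triangle $T = \{v_0, v_1, v_2\} \in \mathcal{T}$ with $v_i \in V_i$, record its color; this gives an $r$-coloring of the edge set of $K_{V_1,V_2}$ (each edge $v_1v_2$ lies in exactly one $T \in \mathcal{T}$). By averaging, some color, say color $1$, is used on at least $n^2/r$ of the bipartite edges. Restrict attention to the bipartite graph $G_1 \subseteq K_{V_1,V_2}$ of these edges together with the associated apex vertices in $V_0$.

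The key step is a counting/embedding lemma of the form used in Theorem~\ref{th1} and the proof of Theorem~\ref{minortheorem}: a bipartite graph between $V_1$ and $V_2$ with at least $n^2/r$ edges contains many copies of $K_{2,2}$ (or more flexibly, many pairs of edges sharing a vertex), and each bipartite edge $v_1v_2 \in G_1$ comes with an apex $v_0 \in V_0$ such that $v_0v_1$ and $v_0v_2$ are also color $1$. What I would actually do is set up an auxiliary structure where we look, for each color class, at the bipartite graph it induces between $V_1$ and $V_2$; a single bipartite edge that is "color-$1$-rich" on both sides (i.e. $v_1$ sends many color-$1$ edges down to $V_2$ and $v_0$ is a common color-$1$ apex for many such) produces a monochromatic triangle. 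The double-exponential bound $2^{r+3}$ in the exponent strongly suggests that the right move is a recursion on $r$: split each part into two halves, observe that one of the four bipartite pieces keeps a constant fraction of the color-$1$ edges, and within a sub-bipartite-graph where one color dominates, either that color already yields $\Omega_{c,r}(n^3)$ monochromatic triangles (via convexity/Kővári–Sós–Turán-style counting of cherries through $V_0$, as in Theorem~\ref{th2} and the Proposition), or we can delete that color and recurse with $r-1$ colors on a structure that is still a constant factor as large, losing a factor of roughly $(4cr)^{-2}$ at each level; unwinding the recursion over $r$ levels gives the stated $(4cr)^{-2^{r+3}}n^3$.

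Concretely, the steps in order: (1) reduce to an $r$-coloring of $K_{V_1,V_2}$ and pick the majority color class $G_1$ with $\geq n^2/r$ edges; (2) prove a one-color counting lemma: if a bipartite graph between two $n$-sets has $\geq \eta n^2$ edges and each edge has a designated apex in a $cn$-set $V_0$ making a color-$1$ triangle, then either there are $\geq f(\eta,c)\, n^3$ such monochromatic triangles, or a positive fraction of the color-$1$ edges lie in a sub-bipartite-graph on half-sized parts; (3) in the latter case, discard color $1$ and recurse on the remaining $r-1$ colors on the smaller (but constant-fraction) structure; (4) track the accumulated loss and check it telescopes to $(4cr)^{-2^{r+3}}$. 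The main obstacle will be step (2)/(3): making the recursion genuinely drop the number of colors while only losing a bounded multiplicative factor and a bounded shrinkage in $n$, so that after $r$ rounds we are left with a single color on parts of size $\Omega(n)$ — at that point the triangle count is immediate from convexity (a single color class with $\Omega(n^2)$ bipartite edges, each with an apex in $V_0$ of size $cn$, forces $\Omega(n^3/c)$ monochromatic triangles by double counting paths $v_1 v_0 v_2$). Getting the bookkeeping of the two competing parameters (density loss vs. size loss) to land on exactly a double exponential in $r$, rather than a tower, is the delicate part, and is presumably where the $4cr$ and the $2^{r+3}$ come from.
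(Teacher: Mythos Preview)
Your proposal has a genuine gap: the recursion you sketch does not actually drop a color. To recurse with $r-1$ colors you need a sub-bipartite-graph on which some color is \emph{sparse}, so that deleting it costs few edges; you instead pass to a sub-piece where the majority color is \emph{dense} and then propose to delete it. Deleting a dense color removes a constant fraction of the bipartite edges, and after $r$ rounds nothing is left --- the $s$-parameter (missing edges) blows up and the base case has no content. The bisection step (``split each part into two halves'') also does nothing toward making any color sparse: all $r$ colors can survive with the same relative densities in every quadrant.

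The missing idea is that the sub-parts are not obtained by bisection or by looking at the coloring of $K_{V_1,V_2}$, but by pivoting on a vertex of $V_0$. If fewer than $\delta n^3$ monochromatic triangles exist, then some $v\in V_0$ lies in many triangles of $\mathcal{T}$ (so in some color, say red, $v$ has neighborhoods $V_1'\subset V_1$, $V_2'\subset V_2$ each of size $\ge n/(4cr)$) yet in fewer than $4\delta n^2$ monochromatic triangles. Every red edge between $V_1'$ and $V_2'$ closes a red triangle through $v$, so there are at most $4\delta n^2$ red edges there: red is \emph{sparse} on $V_1'\times V_2'$. Now delete those few red edges and recurse on $V_1',V_2'$ with $r-1$ colors; the part sizes shrink by a factor $1/(4cr)$ relative to a fixed $|V_0|$, which is exactly what produces the squaring recursion $n_i = n^{2^i}/(4cnr)^{2^i-1}$ and hence the double exponential $(4cr)^{-2^{r+3}}$. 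Your write-up treats $V_0$ only as a bag of apices for counting cherries; in the actual argument it is the source of the pivot that defines the shrinking sub-parts and singles out the sparse color.
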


When $(4cr)^{-2^{r+3}}n^3>n^2$, two of the monochromatic triangles must share an edge between $V_1$ and $V_2$ and so we have the following corollary.

\begin{corollary} \label{cor:col}
Suppose $n>(4cr)^{2^{r+3}}$. Every $r$-edge-coloring of a tripartite graph $G$ with parts $V_0,V_1,V_2$, where $|V_1|=|V_2|=n$, $|V_0| = cn$ and $V_1$ is complete to $V_2$, such that there is a collection of $n^2$ edge-disjoint monochromatic triangles which cover the edges between $V_1$ and $V_2$ contains a monochromatic diamond, i.e., an edge in two monochromatic triangles.
\end{corollary}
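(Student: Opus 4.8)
The plan is to deduce the corollary directly from Theorem~\ref{main} via a pigeonhole argument on the edges between $V_1$ and $V_2$. First I would invoke Theorem~\ref{main}, whose hypotheses on $G$, the coloring, and the covering family of $n^2$ edge-disjoint monochromatic triangles are exactly those assumed in the corollary, to conclude that $G$ contains at least $(4cr)^{-2^{r+3}}n^3$ monochromatic triangles.

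Next I would record the elementary structural fact that, since $G$ is tripartite with parts $V_0,V_1,V_2$ and $V_1$ is complete to $V_2$, every triangle of $G$ has exactly one vertex in each part and hence contains exactly one of the $n^2$ edges joining $V_1$ to $V_2$. This lets me assign to each monochromatic triangle a single ``label'' — its unique $V_1$–$V_2$ edge — drawn from a set of size $n^2$. The only quantitative input is the inequality $(4cr)^{-2^{r+3}}n^3 > n^2$, which rearranges to $n > (4cr)^{2^{r+3}}$, precisely the hypothesis of the corollary.

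With strictly more than $n^2$ monochromatic triangles distributed among only $n^2$ possible labels, the pigeonhole principle yields two distinct monochromatic triangles sharing a common edge $e$ between $V_1$ and $V_2$. That edge $e$ then lies in two monochromatic triangles, which by definition is a monochromatic diamond, and the proof is complete. I do not anticipate any genuine obstacle: all the substance is contained in Theorem~\ref{main}, and the remaining step is a one-line counting argument; the only point worth a moment's care is to note that the triangles furnished by Theorem~\ref{main} are honest triangles of the tripartite graph $G$, so that each really does carry a $V_1$–$V_2$ edge, and that the definition of a monochromatic diamond requires only that a single edge lie in two monochromatic triangles (not that the two triangles share a color), so the pigeonhole conclusion suffices as stated.
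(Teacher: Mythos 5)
Your proof is correct and is essentially the paper's own argument: the paper deduces the corollary from Theorem~\ref{main} by exactly this observation, that $(4cr)^{-2^{r+3}}n^3>n^2$ forces two of the monochromatic triangles (each containing exactly one of the $n^2$ edges between $V_1$ and $V_2$) to share such an edge. Your added care about tripartiteness and the definition of a diamond is fine but does not change the route.
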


One application of the triangle removal lemma, noted by Solymosi \cite{So}, is a short proof for the corners theorem of Ajtai and Szemer\'edi \cite{AjSz}. This theorem states that for each $\epsilon>0$ there is $N(\epsilon)$ such that, for $N \geq N(\epsilon)$, any subset $S$ of the $N \times N$ grid with $|S| \geq \epsilon N^2$ contains a corner, i.e., the vertices $(x,y)$, $(x+d,y)$, $(x,y+d)$ of an isosceles right triangle. This in turn gives a simple proof of Roth's theorem \cite{Ro} that every subset of the integers of positive upper density contains a three-term arithmetic progression.

The best known upper bound for the corners theorem, due to Shkredov \cite{Sh}, states that any subset of the $N \times N$ grid with no corner has cardinality at most $N^2/(\log \log N)^{c}$, where $c>0$ is an absolute constant. Graham and Solymosi \cite{GrSo} proved a better bound for the Ramsey-type analogue of the corners theorem. They showed that there is $c>0$ such that any coloring of the $N \times N$ grid with fewer than $c\log \log N$ colors contains a monochromatic corner. This in turn implies a double-exponential bound for the van der Waerden number $W(3; r)$ (the smallest $N$ such that any $r$-coloring of the set $\{1, 2, \dots, N\}$ contains a monochromatic three-term arithmetic progression), but this is weaker than the exponential bound that follows from the best quantitative estimate for Roth's theorem~\cite{B15, San11}.

Just as the triangle removal lemma implies the corners theorem, Corollary \ref{cor:col} implies the Graham--Solymosi bound on monochromatic corners in colorings of the grid. Indeed, consider an $r$-coloring of the $N \times N$ grid with $N>(8r)^{2^{r+3}}$. Let $V_0$ denote the set of $2N-1$ lines with slope~$-1$ that each contain at least one of the grid points, $V_1$ denote the set of $N$ vertical lines that each contain $N$ of the grid points and $V_2$ denote the set of $N$ horizontal lines that each contain $N$ of the grid points. Consider the tripartite graph $G$ with parts $V_0,V_1,V_2$, where two lines are adjacent if and only if they intersect in one of the points of the $N \times N$ grid, and color the edge between them the color of their intersection point. Note that every line in $V_1$ intersects every line in $V_2$ in the grid, so $V_1$ is complete to $V_2$ in the graph. Moreover, the three lines passing through any grid point form a monochromatic triangle and this collection of $N^2$ monochromatic triangles gives an edge-partition of $G$. Therefore, by Corollary~\ref{cor:col}, $G$ contains a monochromatic diamond. This, in turn, implies that the coloring of the grid must contain a monochromatic corner. 

The proof of Theorem \ref{main} follows from iterating the following simple lemma.

\begin{lemma}\label{mainlemma}
In every $r$-edge coloring of a tripartite graph $G$ with parts $V_0,V_1,V_2$, where $|V_1|=|V_2|=n$, $|V_0|=cn$ and the number $m$ of edges between $V_1$ and $V_2$ is at least $n^2/2$, such that there are fewer than $\delta n^3$ monochromatic triangles and there is a collection of edge-disjoint monochromatic triangles
which cover the edges between $V_1$ and $V_2$, there are subsets $V_1' \subset V_1$ and $V_2' \subset V_2$
with $|V_1'|=|V_2'| \geq \frac{n}{4cr}$ and a color such that the number of edges of that color between $V_1'$ and $V_2'$ is at most $4\delta n^2$.
\end{lemma}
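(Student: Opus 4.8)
The plan is to exploit the fact that the $m \geq n^2/2$ edges between $V_1$ and $V_2$ are covered by edge-disjoint monochromatic triangles, each of which has its apex in $V_0$. Each such triangle consists of a vertex $w \in V_0$ together with an edge $v_1v_2$ between $V_1$ and $V_2$ such that $wv_1$ and $wv_2$ receive the same color as $v_1v_2$. So it suffices to find a vertex $w \in V_0$ and a color $i$ such that the set of edges $v_1 v_2$ ``witnessed at $w$ in color $i$'' is large, and then restrict to the neighborhoods $V_1' = N_i(w) \cap V_1$ and $V_2' = N_i(w) \cap V_2$ (vertices joined to $w$ by an edge of color $i$): every triangle witnessed at $w$ in color $i$ contributes an edge of color $i$ between $V_1'$ and $V_2'$, and the total number of edges of color $i$ between $V_1'$ and $V_2'$ cannot exceed the number of monochromatic triangles with apex $w$, hence is controlled by the hypothesis that there are fewer than $\delta n^3$ monochromatic triangles in total.

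Concretely, I would first count pairs $(w, \{v_1v_2\})$ where $v_1 v_2$ is one of the $m$ edges between $V_1$ and $V_2$, $w \in V_0$ is the apex of the triangle covering $v_1v_2$, and assign to this pair the common color $i$ of the triangle. This gives $m \geq n^2/2$ such pairs distributed over the $|V_0| = cn$ choices of $w$ and $r$ choices of color, so by averaging there is a choice of $w$ and color $i$ with at least $\frac{n^2/2}{crn} = \frac{n}{2cr}$ edges $v_1v_2$ covered by a color-$i$ triangle at $w$. Each such edge has both endpoints in $N_i(w)$, so writing $V_1' = N_i(w)\cap V_1$ and $V_2' = N_i(w)\cap V_2$ we get $|V_1'|, |V_2'| \geq \tfrac{n}{2cr}$ — but we need them \emph{equal} and at least $\frac{n}{4cr}$, which is handled by simply discarding excess vertices from the larger side down to $\max(|V_1'|,|V_2'|) \geq \min(|V_1'|,|V_2'|) \geq \frac{n}{2cr} \geq \frac{n}{4cr}$, keeping at least $\frac{n}{4cr}$ on each side (some care: discarding from the larger side may kill some of the counted edges, so one should instead equalize by noting both sides already have $\geq \frac{n}{2cr}$ and trimming to exactly that common lower bound if needed — the edge count between the trimmed sets is what we bound from above anyway).

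The upper bound on the number of color-$i$ edges between $V_1'$ and $V_2'$ is where the triangle-counting hypothesis enters, and this is the step I expect to require the most care. Every color-$i$ edge $v_1v_2$ with $v_1 \in V_1' = N_i(w)\cap V_1$ and $v_2 \in V_2' = N_i(w) \cap V_2$ forms, together with $w$, a monochromatic (color $i$) triangle. Distinct such edges give distinct triangles (they share the apex $w$ but differ in the base edge). Hence the number of color-$i$ edges between $V_1'$ and $V_2'$ is at most the number of monochromatic triangles with apex $w$. If we had chosen $w$ to also have few monochromatic triangles through it, we would be done: by averaging, all but a small fraction of $w \in V_0$ have at most $\frac{\delta n^3}{cn} \cdot (\text{slack}) $ monochromatic triangles, and we should combine this averaging with the earlier averaging over $(w, i)$ — i.e., restrict attention at the outset to the set of ``good'' $w$ with at most, say, $2\delta n^2$ monochromatic triangles through them (at least half of $V_0$ by Markov, since the average over $V_0$ is $< \delta n^2$), and redo the averaging over this good half, losing only a factor of $2$ and still getting $|V_1'|,|V_2'| \geq \frac{n}{4cr}$. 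Then the number of color-$i$ edges between $V_1'$ and $V_2'$ is at most $2\delta n^2 \leq 4\delta n^2$, as required. The only genuine subtlety is bookkeeping the two averaging arguments (over color and over apex) so that the constants line up to give exactly $\frac{n}{4cr}$ and $4\delta n^2$; everything else is routine double counting.
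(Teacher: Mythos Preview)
Your approach is essentially the paper's, but there is a real gap in the step where you ``redo the averaging over this good half, losing only a factor of $2$.'' Restricting to the vertices $w\in V_0$ with few monochromatic triangles does \emph{not} a priori keep a constant fraction of the $\mathcal{T}$-triangles: all $m$ triangles of $\mathcal{T}$ could in principle sit on the bad vertices, and then your second averaging yields nothing. What rescues the argument---and this is the one nontrivial observation in the whole proof---is that edge-disjointness forces each $w\in V_0$ to lie in at most $n$ triangles of $\mathcal{T}$ (distinct $\mathcal{T}$-triangles at $w$ use distinct edges $wv_1$, and there are only $|V_1|=n$ of those). So if you set the threshold at $4\delta n^2$, the number of bad vertices is less than $\delta n^3/(4\delta n^2)=n/4$, they absorb fewer than $n^2/4$ of the $\mathcal{T}$-triangles, and the good vertices still carry at least $m-n^2/4\geq n^2/4$ of them; now averaging over at most $cn$ good vertices and $r$ colors gives the required $n/(4cr)$. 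Your Markov arithmetic is also off: the average number of monochromatic triangles per vertex of $V_0$ is $<\delta n^3/(cn)=\delta n^2/c$, not $\delta n^2$.

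The paper runs the two restrictions in the opposite order: it first passes to the set $A\subset V_0$ of vertices with at least $m/(2|V_0|)$ $\mathcal{T}$-triangles, uses the same $\leq n$ bound to get $|A|\geq n/4$, and then observes that if every $v\in A$ had $\geq 4\delta n^2$ monochromatic triangles the total would be $\geq \delta n^3$. It also defines $V_i'$ as the $V_i$-vertices appearing in the chosen color's $\mathcal{T}$-triangles at $v$; edge-disjointness then gives $|V_1'|=|V_2'|$ for free, with no trimming needed. This is cleaner than taking $V_i'=N_i(w)\cap V_i$ and then equalizing, though your version also works once the gap above is filled.
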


\begin{proof}
Let $\mathcal{T}$ denote a collection of edge-disjoint monochromatic triangles which cover the edges of $G$ between $V_1$ and $V_2$, so $|\mathcal{T}|=m$. Partition $V_0=A \cup B$, where $v \in V_0$ is in $A$ if the number of triangles in $\mathcal{T}$ containing $v$ is at least $\frac{m}{2|V_0|}$. The number of triangles in $\mathcal{T}$ containing a vertex in $B$ is less than $|B|\frac{m}{2|V_0|} \leq \frac{m}{2}$. Hence, at least
half of the triangles in $\mathcal{T}$ contain a vertex from $A$.

Since each vertex in $A$ is in at most $n$ triangles from $\mathcal{T}$, we have $|A| \geq (m/2)/n \geq n/4$. If every vertex in $A$ is in at least $4\delta n^2$ monochromatic triangles, the total number of monochromatic triangles is at least $|A| 4\delta n^2 \geq \delta n^3$. Hence, there is a
 vertex $v \in A$ in fewer than $4\delta n^2$ monochromatic triangles. Since $v \in A$, there is a color, say red, such that $v$ is in at least
$\frac{m}{2|V_0|r} \geq \frac{n^2}{4cnr} = \frac{n}{4cr}$ monochromatic red triangles from $\mathcal{T}$. For $i=1,2$, let $V_i'$ denote those vertices in $V_i$ which are in a monochromatic red triangle from $\mathcal{T}$ with vertex $v$, so $|V_1'|=|V_2'| \geq \frac{n}{4cr}$. Since $v$ is in fewer than $4\delta n^2$ monochromatic triangles and $v$ is complete in red to $V_1'$ and $V_2'$, the number of red edges between $V_1'$ and $V_2'$ is at most $4\delta n^2$, which completes the proof.
\end{proof}

\vspace{3mm}

{\bf Proof of Theorem \ref{main}:} Let $f(n,r,q,s)$ be the minimum number of monochromatic triangles in an $r$-edge coloring of a tripartite graph $G$ with parts $V_0,V_1,V_2$, where $|V_1|=|V_2| \geq n$ and $|V_0| \leq q$, such that the number of edges between $V_1$ and $V_2$ is at least $|V_1|^2-s$ and the edges between $V_1$ and $V_2$ can be covered by edge-disjoint monochromatic triangles. In Theorem \ref{main}, $V_1$ is complete to $V_2$ and $|V_0|=cn$, so we are trying to prove a lower bound on $f(n,r,cn,0)$, namely $$f(n,r,cn,0) \geq (4cr)^{-2^{r+3}}n^3.$$

When $r=1$, Lemma \ref{mainlemma} implies that if $G$ is a tripartite graph $G$ with parts $V_0,V_1,V_2$, where $|V_1|=|V_2|=n$, $|V_0|=cn$ and $e(V_1,V_2) \geq n^2/2$, such that $G$ contains at most $\delta n^3$ triangles and there is a collection of edge-disjoint triangles in $G$ which cover the edges between $V_1$ and $V_2$, then there are subsets $V_1' \subset V_1$ and $V_2' \subset V_2$ with $|V_1'|=|V_2'| \geq \frac{n}{4c}$ for which $e(V_1',V_2') \leq 4\delta n^2$. In particular, if $e(V_1,V_2)=n^2-s$, then $$4 \delta n^2 \geq e(V_1',V_2') \geq |V_1'||V_2'|-s \geq \left(\frac{n}{4c}\right)^2-s,$$ and 
hence $\delta n^3 \geq \frac{n^3}{64c^2}-\frac{ns}{4}$. Substituting $q=cn$, we get the bound 
$$f(n,1,q,s) \geq  \frac{n^5}{64q^2}-\frac{ns}{4}.$$

When $s<n^2/2$, Lemma~\ref{mainlemma} implies that by deleting the edges of the sparsest color between $V_1'$ and $V_2'$ and letting $f_0=f(n,r,q,s)$, we have
$$f_0> f\left(\frac{n^2}{4qr},r-1,q,s+4f_0/n\right).$$ 
Let $n_i=\frac{n^{2^i}}{(4qr)^{2^i-1}}$, $s_0=s$ and, for $i\geq 1$, $s_i=s_{i-1}+4f_0/n_{i-1}$, so $s_i<s+4f_0\sum_{j=0}^{i-1}\frac{1}{n_{j}} <s+5f_0/n_{i-1}$. After $i$ iterations, if $s_{i-1}<n_{i-1}^2/2$, we get
$$f_0> f\left(n_i,r-i,q,s_i\right).$$

We set $s=0$ and $q=cn$. We use the above inequalities to compute a lower bound on $f_0=f(n,r,cn,0)$.  
Either we have $s_{i-1} \geq n_{i-1}^2/2$ for some $i<r$ or $f_0 \geq f\left(n_{r-1},1,cn,s_{r-1}\right)$. 
In the first case, we have $5f_0/n_{i-2} \geq s_{i-1} \geq n_{i-1}^2/2$ and, therefore,
$$f_0 \geq \frac{n_{i-2}n_{i-1}^2}{10} \geq \frac{n_{r-3}n_{r-2}^2}{10} \geq (4cr)^{-2^r}n^3.$$   
In the second case, we have 
$$f_0 \geq f\left(n_{r-1},1,cn,s_{r-1}\right) \geq \frac{n_{r-1}^5}{64q^2}-\frac{ns_{r-1}}{4} \geq \frac{n^3}{(4cr)^{5(2^{r-1}-1)}64c^2}-\frac{5}{4}f_0\frac{n}{n_{r-2}},$$ 
in which case we get $$f_0 \geq \frac{1}{2}\frac{n_{r-2}}{n} \frac{n^3}{(4cr)^{5(2^{r-1}-1)}64c^2} \geq (4cr)^{-2^{r+3}}n^3.$$ 
In either case, we have the desired inequality, which completes the proof of Theorem \ref{main}. \qed

\vspace{3mm}
\noindent
{\bf Acknowledgements.} We would like to thank the anonymous referees for their helpful remarks and Zoltan F\"uredi for bringing the reference~\cite{F91} to our attention.

\end{document}